\newtheorem{thm}{Theorem}[section]
\newtheorem{lemm}[thm]{Lemma}
\newtheorem{cor}[thm]{Corollary}
\newtheorem{defi}[thm]{Definition}
\newtheorem{prop}[thm]{Proposition}
\theoremstyle{remark}
\newtheorem{rmk}[thm]{Remark}
\theoremstyle{definition}
\newtheorem{exam}{Example}
\numberwithin{equation}{subsection}
\title[Widths of spheres associated to the distance function]{The min-max width of spheres associated to the distance function}
\author{Rafael Montezuma and Idalina Ribeiro}
\address{Departamento de Matemática, UFC \\ Fortaleza/CE 60455-760 Brazil}
\email{montezuma@mat.ufc.br}
\email{idalinasilva1a@gmail.com}
\thanks{R.M. was supported by Instituto Serrapilheira grant ``New perspectives of the min-max theory for the area functional".}
\begin{document}

\begin{abstract}
	What one obtains when the min-max methods for the distance function are applied on the space of pairs of points of a Riemannian two-sphere? This question is studied in details in the present article. We show that the associated min-max width do not always coincide with half of the length of a simple closed geodesic which is the union of two minimizing geodesics with the same endpoints. Therefore, it is a new geometric invariant. We study the structure of the set of minimizing geodesics joining a pair of points realizing the width, and relationships between this invariant and the diameter. The extrinsic case of an embedded Riemannian sphere is also considered.
\end{abstract}

\maketitle

\section{Introduction}

The min-max widths $\{\omega_k(M^{n+1}, g)\}$, $k\in \mathbb{N}$, are a sequence of geometric invariants of an $(n+1)$-dimensional closed Riemannian manifold, associated to special critical values of the $n$-dimensional volume functional. These quantities are related to several deep application in min-max theory and, more generally, in differential geometry in the past decade. When $3\leq n+1 \leq 7$, each $\omega_k(M, g)$ is realized by a weighted $n$-dimensional volume of smooth closed embedded minimal hypersurfaces. Deeper properties of this sequence of numbers, associated hypersurfaces, and relationships with other invariants are part of a very active research field. We refer the reader to \cite{MarNev}, \cite{LioMarNev}, \cite{MarNevDuke}, \cite{AmbMon}, \cite{ChoMan1}, \cite{Song}, \cite{Zhou}, and references therein. 

When $n+1 =2$, the widths are achieved by the weighted lengths of immersed closed geodesics (\cite{ChoMan}, \cite{Pitts}, \cite{SarStr}, \cite{CalabiCao}, \cite{Aiex}, \cite{Don}, \cite{DM}).

In a recent article, \cite{AMS}, Ambrozio, Santos, and the first named author introduced a new Morse-Lusternik-Schinirelmann theory for the distance function defined on the space of pairs of points of an embedded circle in a Riemannian manifold. The associated min-max quantity generalizes the classical notion of width of convex plane curves, and has several other features. It is interesting, for instance, to analyze the structure of the set of minimizing geodesics joining a pair of points that is a critical point of the distance function realizing the min-max width. The natural comparison between the new invariant and the extrinsic diameter of the curve allows for connections with the theory of curves of constant width. Examples in \cite{AMS} explained that the new invariant and the relative first Almgren-Pitts min-max width, $\omega_1(\mathbb{D}, g)$, of a Riemannian two-dimensional disk with convex boundary do not always coincide. More precisely, the minimizing geodesics joining the points in a critical pair realizing the distance related width are not always perpendicular to the boundary curve.

A natural question arises. What one obtains when the min-max methods developed in \cite{AMS} for the distance function are applied on the space of pairs of points of a Riemannian sphere? We investigate this question in details. 

\subsection{The width of an embedded sphere}\label{subsect-width-introd} Let $\Sigma$ be a smoothly embedded $2$-sphere in a complete Riemannian manifold $(M^n,g)$.  The case in which $M$ is a $2$-sphere itself and the embedding is a diffeomorphism is also considered here, i.e., it is admissible to have $(M^n, g) = (S^2, g)$ and $\Sigma = M$. We use $x \in S^2$ and $x\in \Sigma$ interchangeably to refer to the points of $S^2$ and $\Sigma$ that are identified by the embedding.

In this section, we define the basic notions of sweepouts in the space of pairs of points $\mathcal{P}_{\Sigma}$, and the distance related min-max width of $\Sigma$.

Let us begin with the basic definition of the space of pairs of points of the embedded sphere $\Sigma \subset M$,
$$\mathcal{P}_{\Sigma}: = \{\{p, q\} \subset S^2: p, q \in \Sigma\}.$$
Notice that $\mathcal{P}_{\Sigma}$ is the set of subsets of $\Sigma$ with one or two points, as $p=q$ is admissible. This space, with its natural topology, is homeomorphic to the complex projective plane $\mathbb{CP}^2$. The proof of this fact and other properties of the space of pairs of points are included in Section \ref{sect-details-part-I}.

In order to properly define the class of sweepouts of $\Sigma$ that is considered in this work, fix $x_0 \in \Sigma$ and define the following map
$$
\Phi_0 : S^2 \rightarrow \mathcal{P}_{\Sigma}, \quad \Phi_0(x):= \{x, x_0\}, \text{ for } x\in S^2,
$$
where $x\in S^2$ is identified with $x\in \Sigma$ by the embedding. This map is not homotopic to a constant map, as proven in Section \ref{subsect-std-sweep}. Therefore, its homotopy class is a natural class of maps to perform min-max.

\begin{defi}\label{defi-sweepout}
A continuous map $\phi : S^2 \rightarrow \mathcal{P}_{\Sigma}$ is a sweepout of $\Sigma$ if there exist continuous maps $\phi_1, \phi_2 : S^2 \rightarrow \Sigma$ such that $\phi(x) = \{\phi_1(x), \phi_2(x)\}$, for every $x\in S^2$, and $\phi$ is homotopic to $\Phi_0$.
\end{defi}

\begin{defi}\label{defi-width}
Let $\Sigma$ be a smoothly embedded $2$-sphere in a complete Riemannian manifold $(M^n,g)$. The min-max width of $\Sigma$ is defined by 
$$
W_d(\Sigma) := \inf_{\phi} \max_{x \in S^2} dist(\phi_1(x), \phi_2(x)),  
$$
where the infimum is taken over the set of all sweepouts $\phi=\{\phi_1, \phi_2\}$ of $\Sigma$, and $dist(\cdot, \cdot)$ represents the canonical distance function induced by $g$.
\end{defi}

The positivity of the width depends not only on the fact that sweepouts are not homotopic to a constant map. We verify that maps from $S^2$ to $\mathcal{P}_{\Sigma}$ whose image is contained in the set of trivial pairs $\{x\} \in \mathcal{P}_{\Sigma}$ are not homotopic to $\Phi_0$. Further details are included in Section \ref{subsect-not-sweepout}. 

These definitions are motivated by the Morse-Lusternik-Schnirelmann theory about the number of critical points of a smooth function on $\mathbb{CP}^2$, as in the case of the width of embedded curves considered in \cite{AMS}. One important difference in the present article is that one must work with $2$-parameter sweepouts, while in \cite{AMS} $1$-parameter sweepouts were applied. This increased number of parameters poses new difficulties in the construction of sweepouts. Another interesting aspect of the present article is that the intrinsic case $(S^2, g)$ is alrealdy interesting. We still need to overcome the difficulties imposed by the non-local character of the distance functional and the lack of smoothness of this function at some pairs in $\mathcal{P}_{\Sigma}$, as in \cite{AMS} and references therein. The notion of regular values of the distance is as follows.

\begin{defi}\label{defi-regular-point}
A point $\{x,y\} \in \mathcal{P}_{\Sigma}$ is a regular point of the Riemannian distance function if there exists a vector $(w_1, w_2) \in T_x\Sigma \times T_y\Sigma$ satisfying the following property: whenever $\gamma : [0, a] \rightarrow M$ is minimizing geodesic (i.e., it realizes the distance between its endpoints), parametrized by arc-length with extremities $\gamma(0)=x$ and $\gamma(a)=y$, one has that
$$
g( (w_1, w_2), \nu_{\gamma} ):= g(w_2, \gamma^{\prime}(a)) - g(w_1, \gamma^{\prime}(0)) < 0,
$$
where $\nu_{\gamma} = (-\gamma^{\prime}(0), \gamma^{\prime}(a))$ is the vector in $T_xM \times T_yM$ that collects the outward pointing unit tangent vectors of $\gamma$ at the endpoints. 
\end{defi}

Notice that we are using the same notation $g$ for the inner products induced on $T_xM\times T_yM$, for $x, y \in M$, i.e., $g((w_1, w_2), (v_1, v_2)):= g(w_1, v_1) + g(w_2, v_2)$. The first variation formula implies that a variation of a minimizing geodesic $\gamma$ as in Definition \ref{defi-regular-point} in the direction of a vector field $X$ along $\gamma$ with $X(0) = w_1$ and $X(a)=w_2$ has negative first derivative of the length.

\begin{defi}\label{defi-critical-point}
A point $\{x,y\} \in \mathcal{P}_{\Sigma}$ is called a critical point of the distance function if it is not a regular point in the sense of Definition \ref{defi-regular-point}.
\end{defi}

Critical points $\{x,y\}$ are characterized in terms of the structure of the set of minimizing geodesics joining $x$ and $y$ using the following terminology.

\begin{defi}\label{defi-simultaneously-stationary}
Let $\{x, y\} \in \mathcal{P}_{\Sigma}$, and $\gamma_1, \ldots, \gamma_k : [0, a] \rightarrow M$ be minimizing geodesics parametrized by arc-length with endpoints $x = \gamma_i(0)$ and $y = \gamma_i(a)$, for every $1\leq i \leq k$. The set $\{\gamma_1, \ldots, \gamma_k\}$ is called simultaneously stationary with endpoints $x$ and $y$ if there exist constants $0<c_i<1$, $1\leq i \leq k$, such that $\sum_{i=1}^k c_i =1$ and $\sum_{i=1}^k c_i \nu_{\gamma_i}^T$ is the zero vector in $T_{x}\Sigma \times T_y \Sigma$, where $\nu_{\gamma_i} = (-\gamma_i^{\prime}(0), \gamma_i^{\prime}(a))$ is the collection of the outward pointing unit tangent vectors of $\gamma_i$ at the endpoints of this geodesic, and $\nu_{\gamma_i}^T = (-\gamma_i^{\prime}(0)^T, \gamma_i^{\prime}(a)^T)$ denotes the orthogonal projections of those vector on $T\Sigma$.   
\end{defi}

Definition \ref{defi-simultaneously-stationary} includes the case $k=1$, which corresponds to the case where there exists a minimizing geodesic joining $x$ and $y$ which is orthogonal to $\Sigma$ at these endpoints, i.e., a free boundary geodesic which is also minimizing. In the intrinsic case, $\Sigma = M$, a set of simultaneously stationary geodesics contains at least two curves, and a set $\{\gamma_1, \gamma_2\}$ with two simultaneously stationary geodesics is such that $\gamma_1\cup \gamma_2$ is a closed geodesic of $\Sigma$.

The following proposition explains the relationship between the critical points and the existence of simultaneously stationary geodesics. 

\begin{prop}\label{prop-characterization-critical-simult-st}
Let $\Sigma$ be a smoothly embedded $2$-sphere in a complete Riemannian manifold $(M^n,g)$, and $\{x, y\} \in \mathcal{P}_{\Sigma}$. If there exists a set $\{\gamma_1, \ldots, \gamma_k\}$ of simultaneously stationary geodesics with endpoints $x$ and $y$, then $\{x, y\}$ is a critical point of the distance function. Conversely, if $\{x, y\}$ is a critical point of the distance, then, there exists an integer number $k \leq 5$, and a set of $k$ simultaneously stationary minimizing geodesics with endpoints $x$ and $y$.  
\end{prop}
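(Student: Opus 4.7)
The plan is to formulate the characterization as a convex-geometric statement about the set of vectors $\nu_{\gamma}^{T} \in T_x\Sigma \times T_y\Sigma$ indexed by the minimizing geodesics from $x$ to $y$, and then to invoke the separating hyperplane theorem together with Carath\'eodory's theorem. Let $\mathcal{G}$ denote the set of minimizing geodesics $\gamma:[0,a]\to M$ from $x$ to $y$, parametrized by arc-length with $a=\mathrm{dist}(x,y)$, and set
$$
K := \{\nu_{\gamma}^{T} : \gamma \in \mathcal{G}\} \subset T_x\Sigma \times T_y\Sigma.
$$
Note that the ambient inner product $g$ restricted to tangential vectors satisfies $g((w_1,w_2),\nu_\gamma)=g((w_1,w_2),\nu_\gamma^T)$ for every $(w_1,w_2)\in T_x\Sigma\times T_y\Sigma$. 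A first preliminary step is to observe that $\mathcal{G}$ is compact in $C^1$ (by Arzel\`a--Ascoli applied to arc-length geodesics of the same length in a complete manifold, using that the initial velocity determines the geodesic and is valued in a compact sphere bundle) and that the assignment $\gamma \mapsto \nu_\gamma^T$ is continuous. Therefore $K$ is compact, and hence the convex hull $\mathrm{conv}(K)$ is a compact convex subset of the $4$-dimensional vector space $T_x\Sigma \times T_y\Sigma$.

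For the first (easy) implication, suppose $\{\gamma_1,\ldots,\gamma_k\}$ is a simultaneously stationary set with weights $c_i$, so that $\sum_i c_i \nu_{\gamma_i}^T = 0$. Given any $(w_1,w_2) \in T_x\Sigma \times T_y\Sigma$, pairing against this vanishing sum yields
$$
\sum_{i=1}^{k} c_i \, g\bigl((w_1,w_2),\nu_{\gamma_i}\bigr) = g\Bigl((w_1,w_2),\sum_{i=1}^{k} c_i\nu_{\gamma_i}^T\Bigr) = 0,
$$
so at least one of the terms $g((w_1,w_2),\nu_{\gamma_i})$ is non-negative. Hence no $(w_1,w_2)$ satisfies the strict inequality required by Definition \ref{defi-regular-point}, proving $\{x,y\}$ is a critical point.

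For the converse, I argue the contrapositive using finite-dimensional separation. Suppose $0 \notin \mathrm{conv}(K)$. Since $\mathrm{conv}(K)$ is closed and convex in $T_x\Sigma \times T_y\Sigma$, the separating hyperplane theorem provides a vector $(u_1,u_2) \in T_x\Sigma \times T_y\Sigma$ and a constant $\epsilon>0$ with $g((u_1,u_2),v)\geq \epsilon$ for every $v \in \mathrm{conv}(K)$. Setting $(w_1,w_2) := -(u_1,u_2)$ then gives $g((w_1,w_2),\nu_\gamma) = g((w_1,w_2),\nu_\gamma^T) \leq -\epsilon < 0$ for every $\gamma \in \mathcal{G}$, which exhibits $\{x,y\}$ as a regular point. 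Thus if $\{x,y\}$ is critical, necessarily $0 \in \mathrm{conv}(K)$. Carath\'eodory's theorem, applied in the ambient $4$-dimensional real vector space $T_x\Sigma \times T_y\Sigma$, then expresses $0$ as a convex combination of at most $5$ elements of $K$; discarding terms with zero coefficients (and absorbing the degenerate $k=1$ case where $\nu_{\gamma_1}^T=0$ corresponds to a free boundary minimizing geodesic) produces a simultaneously stationary set of at most $5$ minimizing geodesics, with positive coefficients summing to one.

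The conceptual content is entirely contained in the duality between \emph{regular points} and \emph{strict linear separation of $K$ from the origin}, so the main technical point to be checked carefully is the compactness of $\mathcal{G}$ and the continuity of $\gamma \mapsto \nu_\gamma^T$; these together guarantee that $\mathrm{conv}(K)$ is closed, so that the separation is strict and the converse implication actually produces an element of $\mathrm{conv}(K)$ rather than of its closure. Once compactness is in hand, the bound $k\leq 5$ is immediate from Carath\'eodory applied in dimension $\dim(T_x\Sigma\times T_y\Sigma)=4$.
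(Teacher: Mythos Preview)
Your proof is correct and follows essentially the same route as the paper: both identify $T_x\Sigma\times T_y\Sigma$ with $\mathbb{R}^4$, observe that regularity of $\{x,y\}$ is equivalent to strict linear separation of the set $K=\{\nu_\gamma^T:\gamma\in\mathcal{G}\}$ from the origin, and then combine the separating hyperplane theorem with Carath\'eodory's theorem (in dimension $4$, giving the bound $k\le 5$). Your treatment is slightly more explicit about the compactness of $\mathcal{G}$ and the continuity of $\gamma\mapsto\nu_\gamma^T$ needed to ensure $\mathrm{conv}(K)$ is closed, but the argument is the same.
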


The proof of Proposition \ref{prop-characterization-critical-simult-st} is postponed to Section \ref{subsect-regular-and-critical}. The first main result of this work is the fact that the quantity $W_d(\Sigma)$ is realized as the distance between a pair of points of $\Sigma$ which form a critical point of the Riemannian distance function. This is the content of the following theorem.

\begin{thm}\label{thm-W_d-critical}
Let $\Sigma$ be a smoothly embedded $2$-sphere in a complete Riemannian manifold $(M^n,g)$. Then, the min-max width $W_d(\Sigma)$ is a strictly positive critical value of the distance function. 
\end{thm}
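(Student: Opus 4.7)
The plan is to prove the two assertions separately: strict positivity of $W_d(\Sigma)$, and the fact that this value is realized on a critical pair via a pull-tight argument.

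\textbf{Strict positivity.} I argue by contradiction, assuming the existence of sweepouts $\phi^n=\{\phi^n_1,\phi^n_2\}$ with $\max_{x\in S^2}\text{dist}(\phi^n_1(x),\phi^n_2(x)) \to 0$. Since $\Sigma$ is a compact submanifold of $M$, there is a tubular neighborhood $U$ of $\Sigma$ admitting a smooth retraction $\pi:U\to\Sigma$. For $n$ sufficiently large, every minimizing geodesic of $M$ from $\phi^n_1(x)$ to $\phi^n_2(x)$ lies inside $U$ and depends continuously on $x$ via the exponential map (degenerating to a constant path when the endpoints coincide); its image under $\pi$ yields a continuous homotopy $s\mapsto H_s(x)\in\Sigma$ with $H_0(x)=\phi^n_1(x)$ and $H_1(x)=\phi^n_2(x)$. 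Then $(s,x)\mapsto\{\phi^n_1(x),H_s(x)\}$ is a continuous homotopy in $\mathcal{P}_\Sigma$ between $\phi^n$ and the map $x\mapsto\{\phi^n_1(x)\}$, whose image lies in the set of trivial pairs. This contradicts the fact, recalled in Section \ref{subsect-not-sweepout}, that sweepouts are not homotopic to maps with image in the trivial pairs.

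\textbf{Criticality via pull-tight.} Assume for contradiction that every $\{x,y\}\in\mathcal{P}_\Sigma$ with $\text{dist}(x,y)=W_d(\Sigma)$ is a regular point. The central step is to construct a continuous deformation of $\mathcal{P}_\Sigma$, homotopic to the identity, that strictly decreases the distance on a neighborhood of the level set $\{\text{dist}=W_d(\Sigma)\}$. Near each regular pair $\{x_0,y_0\}$ at that level, the bound $g((w_1,w_2),\nu_\gamma)\leq -2c<0$ holds uniformly over the compact space of minimizing geodesics from $x_0$ to $y_0$, by Definition \ref{defi-regular-point}. Upper semi-continuity of the set of minimizing geodesics with respect to endpoints then guarantees that a suitably parallel-transported version of $(w_1,w_2)$ provides a continuous descending direction, with first variation bounded above by $-c$, on some open neighborhood $V\subset\mathcal{P}_\Sigma$ of $\{x_0,y_0\}$ and for every minimizing geodesic joining the endpoints of each pair in $V$. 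A partition of unity subordinated to a finite cover of the compact level set glues these local choices into a continuous, swap-symmetric vector field $X$ on $\Sigma\times\Sigma$, supported in a neighborhood of $\{(x,y):\text{dist}(x,y)=W_d(\Sigma)\}$, which descends to $\mathcal{P}_\Sigma$; its time-one flow $F$ is an isotopy of $\mathcal{P}_\Sigma$ homotopic to the identity such that, for some $\delta,\epsilon>0$, $\text{dist}(F(\{x,y\}))<\text{dist}(x,y)-\epsilon$ whenever $|\text{dist}(x,y)-W_d(\Sigma)|<\delta$. Applying $F$ to a sweepout $\phi$ with $\max\text{dist}\circ\phi<W_d(\Sigma)+\delta$ produces a sweepout $F\circ\phi$ (having continuous lifted components because $X$ is symmetric) with $\max\text{dist}\circ(F\circ\phi)<W_d(\Sigma)-\min\{\epsilon,\delta\}/2$, contradicting the definition of $W_d(\Sigma)$ as an infimum.

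\textbf{Main obstacle.} The principal difficulty, as in \cite{AMS}, lies in the non-smoothness of the distance function at pairs admitting several minimizing geodesics: the descending vector must decrease the length of \emph{every} such geodesic simultaneously. Definition \ref{defi-regular-point} is formulated precisely to encode this uniform requirement, and compactness of the unit-speed minimizing geodesics between fixed endpoints, together with their upper semi-continuity in the endpoints, is what enables the construction to pass to a neighborhood and then to be patched globally. A minor secondary issue is that $\mathcal{P}_\Sigma$ fails to be a manifold at trivial pairs, but these lie at distance zero and are therefore harmless to a deformation supported near $\{\text{dist}=W_d(\Sigma)\}>0$.
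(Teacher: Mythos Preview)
Your proposal is correct and, for the criticality half, follows essentially the same pull-tight argument as the paper: the paper packages the construction of the descending field into Proposition~\ref{prop-gradient-like} and then runs the identical deformation argument (cut off near the level, flow a near-optimal sweepout below $W_d(\Sigma)$).

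The positivity argument is where you genuinely diverge. The paper does not take a limit of sweepouts with small maximum distance; instead it fixes the antipodal map on the domain $S^2$ and shows that \emph{every} sweepout must hit some pair $\{i(x),i(-x)\}$, by contracting along short great-circle arcs of $S^2$ whenever no antipodal pair is hit. This yields the explicit quantitative bound $W_d(\Sigma)\geq \min_{x\in S^2}\operatorname{dist}(i(x),i(-x))>0$ and uses only the round geometry of the parameter sphere, avoiding any appeal to the injectivity radius of $M$ or to a tubular neighborhood of $\Sigma$. Your route---project short ambient geodesics back to $\Sigma$ to contract a small-distance sweepout to trivial pairs---also works (compactness of $\Sigma$ gives a uniform injectivity radius nearby), but it is less self-contained and non-quantitative. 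The paper's antipodal argument is moreover reused later (see Remark~\ref{rmk-fixedpoints} and Section~\ref{exam-critical-value-below-width}) in the form ``every sweepout meets $\{y,\Psi(y)\}$ for any fixed-point-free $\Psi$'', so it has some structural payoff beyond this theorem.
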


The new invariant detects a geometric number different from those quantities associated to the construction of closed geodesics, in the intrinsic case $S^2=\Sigma = M$. For the round metric on $S^2$, the only pairs of critical points are those of the form $\{x, -x\}$, made of two antipodal points, and the trivial pairs $\{x\}$, where $x=y$. In this case, every non-trivial critical point $\{x, -x\}$ bounds pairs of simultaneously stationary minimizing geodesics whose union is a great circle. On the other hand, there are Riemannian metrics on $S^2$ containing critical points which do not bound two simultaneously stationary geodesics, see the example in Section \ref{subsect-gluing-2-triangles}. These critical points can even be the only ones that realize the min-max width $W_d(S^2, g)$.

The new width also detects an invariant different from those that are usually built to detect free boundary geodesics in the extrinsic case, $\Sigma \subset M$. This can be observed on rotationally symmetric metrics on $\mathbb{R}^3$ obtained by a perturbation of the euclidean metric near the origin in such a way that the portions of geodesics through the origin satisfy the free boundary condition on spheres centered at the this point, but they do not minimize the length among curves with the same endpoints. It is even possible to obtain positively curved examples with similar properties, see Example \ref{exam-example-ellipsoid}.

\subsection{The intrinsic width of positively curved spheres}\label{subsect-intrinsic-thmb}

In this section we consider the intrinsic case $(S^2, g)$ only. We analyze further properties of the structure of the set of minimizing geodesics connecting the points $x$ and $y$ of a critical point such that $dist(x, y) = W_d(S^2)$. It is known that $\{x, y\}$ bound at least two minimizing geodesics, but it is not necessarily true that it bounds a pair of simultaneously stationary minimizing geodesics. Under a natural convexity assumption, we are able to refine Proposition \ref{prop-characterization-critical-simult-st}.

In order to state our theorem precisely, we recall the notions of stability and Morse index of closed geodesics. Let $\gamma: [0, a] \rightarrow S^2$ be a closed geodesic parametrized by arc-length with respect to the Riemannian metric $g$. Given a vector field $V$ along $\gamma$ and normal to this curve, $g(V(s), \gamma^{\prime}(s))=0$, for all $s\in [0, a]$, consider the quadratic form defined by
\begin{equation}\label{defi-quadratic-form-second-variation}
\mathcal{Q}(V, V):= \int_0^a \left(|V^{\prime}(s)|^2 - K(\gamma(s))|V(s)|^2\right) ds,
\end{equation}
where $V^{\prime}(s) = \left.\nabla_{\gamma^{\prime}} V \right|_{s}$ is the covariant derivative of $V$, and $K(\gamma(s))$ represents the Gauss curvature of $(S^2,g)$ at the point $\gamma(s)$. Letting $\{\gamma_t\}$ be a smooth variation of $\gamma_0=\gamma$ by smooth closed curves with variational vector field $V$, then the second variation of the length functional is given by
$$
\left. \frac{d^2}{dt^2} L(\gamma_t)\right|_{t=0} = \mathcal{Q}(V,V).
$$
The \textit{Morse index of} $\gamma$ is the index of the quadratic form $\mathcal{Q}$, i.e., the maximal dimension of a vector space of vector fields normal to $\gamma$ on which $\mathcal{Q}$ is negative definite. The geodesic $\gamma$ is called \textit{stable} if its Morse index is equal to zero. Positively curved Riemannian spheres do not admit stable closed geodesics. The main result of this part is the following theorem.

\begin{thm}\label{thm-3-simult-st}
Let $(S^2,g)$ be a Riemannian sphere. Assume that $(S^2, g)$ contains no closed geodesics which are stable. Then, the min-max width $W_d(\Sigma)$ is the lowest strictly positive critical value of the distance function. In addition, if $\{x, y\}$ is a critical point with $dist(x,y) = W_d(S^2, g)$, then there exists an integer number $k \in \{2,3\}$, and a set of $k$ simultaneously stationary minimizing geodesics with endpoints $x$ and $y$.
\end{thm}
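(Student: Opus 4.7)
The plan is to prove the two assertions in turn. For the first, that $W_d(S^2,g)$ is the lowest positive critical value of the distance, I would argue by contradiction. Suppose there exists a critical value $c_0 \in (0, W_d(S^2,g))$, realized by a critical pair $\{x_0,y_0\}$. By Proposition \ref{prop-characterization-critical-simult-st} this pair admits a simultaneously stationary family of minimizing geodesics $\gamma_1, \ldots, \gamma_k$ with $2 \leq k \leq 5$ (the lower bound $k\geq 2$ being automatic in the intrinsic case as noted after Definition \ref{defi-simultaneously-stationary}). The role of the non-stability hypothesis is to produce a local distance-decreasing variation of $\{x_0,y_0\}$: when $k=2$ the union $\gamma_1\cup\gamma_2$ is itself a closed geodesic, hence of positive Morse index, and a negative eigenfield of the Jacobi operator, split at the two endpoints, yields a second-order strict decrease of $d$; for $k\geq 3$ an analogous unstable direction is extracted by combining the relation $\sum c_i \nu_{\gamma_i}^T=0$ with the instability of closed geodesics associated to appropriate subfamilies and with higher-order information from the Jacobi equations along the $\gamma_i$.

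These local variations are then assembled into a continuous pseudo-gradient deformation of $\mathcal{P}_{\Sigma}$ away from the trivial pairs, via cut-offs and partition-of-unity constructions parallel to those of \cite{AMS}. Applied iteratively to a near-optimal sweepout $\phi$ with $\max d \leq W_d(S^2,g) + \varepsilon$, the deformation pushes its maximum past each critical value in $(0, W_d(S^2,g))$ while staying in the homotopy class of $\Phi_0$, producing a sweepout whose maximum lies strictly below $W_d(S^2,g)$, contradicting the definition of the width.

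For the second assertion, the lower bound $k \geq 2$ has already been noted. To rule out $k \geq 4$ at a critical pair $\{x,y\}$ realizing $W_d(S^2,g)$, I would invoke the index upper bound characteristic of 2-parameter min-max: such a critical pair admits at most two linearly independent second-order directions of distance decrease. A simultaneously stationary family of $k$ geodesics provides, after quotienting by the single linear relation $\sum c_i \nu_{\gamma_i}^T = 0$ imposed by stationarity, a space of dimension $k-1$ of independent infinitesimal directions along which the length of any prescribed $\gamma_i$ can be decreased. The non-stability hypothesis is used to rule out neutral second-order degeneracies, so the effective index equals $k-1$, and the bound $k - 1 \leq 2$ then gives $k \leq 3$.

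The main obstacle will be the construction of the deformation in the first part when $k \geq 3$, since in that case no single first-order variation of $(x_0, y_0)$ can shorten every $\gamma_i$ simultaneously. The descent direction has to be produced from a careful second-order analysis and glued continuously into the sweepout in a homotopy-preserving manner, mirroring the most delicate step of the deformation technique developed in \cite{AMS}. This same technical device underlies the index count used in the second part, which is why both parts rely on the non-stability hypothesis in the same essential way.
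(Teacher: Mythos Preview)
Your argument for the first assertion is logically broken. You assume a critical value $c_0 < W_d$ exists at some pair $\{x_0,y_0\}$, construct a local second-order descent direction there, and then claim this lets you deform a near-optimal sweepout (whose maximum is near $W_d$, not $c_0$) to one with maximum strictly below $W_d$. But a deformation localized near level $c_0$ has no effect on a maximum sitting at level $W_d$; and if instead you apply the same descent construction at the critical points on level $W_d$ itself, you run into the standard obstruction of min-max theory: the homotopy class forces the sweepout through those points, and a second-order (index) direction does not yield a first-order pseudo-gradient estimate of the type used in Proposition~\ref{prop-gradient-like}. Concretely, on the round sphere every antipodal pair is a strict maximum of $d$ (hence has plenty of descent directions), yet $W_d=\pi>0$. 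Your scheme, taken at face value, would force $W_d=0$ on every sphere without stable closed geodesics.

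The paper does something entirely different and constructive. Given any non-trivial critical pair $\{x,y\}$ with simultaneously stationary geodesics $\gamma_1,\dots,\gamma_k$, the components of $S^2\setminus\bigcup_i\gamma_i$ are locally convex bigons. Iterated Birkhoff curve shortening, started at each boundary $\gamma_l\cup\gamma_k$, produces a one-parameter family of closed curves of length $\le 2\,dist(x,y)$ sweeping out that region; the absence of stable closed geodesics is exactly what guarantees the process terminates at a point (Propositions~\ref{prop-exis-swee} and \ref{exis-sweep-closed-geod}). Gluing the regional families yields a degree-one map $F:S^2\to S^2$ and a sweepout $\psi(p)=\{F(p),F(s(p))\}$ with $\max_p d(\psi(p))\le dist(x,y)$, so $W_d\le dist(x,y)$ for every non-trivial critical pair.

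For the second assertion, your index-counting is not a proof. The distance function is not $C^2$ at pairs joined by several minimizing geodesics, so neither an ``index $\le 2$'' bound for $2$-parameter min-max nor the claim ``effective index $=k-1$'' has a rigorous meaning here; and the latter is not even heuristically clear, since the conormals $\nu_{\gamma_i}^T$ live in the $4$-dimensional space $T_xS^2\times T_yS^2$ and nothing prevents $k=5$. The paper instead argues by contradiction: if no subfamily of size $2$ or $3$ is simultaneously stationary, then the convex-geometry Lemma~\ref{lemm-convex-geometry} (for $k=4$) or a pigeonhole on angle-sums (for $k=5$) produces three or four of the $\gamma_i$ whose velocity vectors at both $x$ and $y$ subtend angles $\le\pi$. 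For such a subfamily one repeats the Birkhoff construction and, because it is \emph{not} simultaneously stationary, one can push the endpoints by a vector satisfying $g((w_1,w_2),\nu_{\gamma_i})<0$ for each $i$ in the subfamily (Proposition~\ref{3-simult-strict}), obtaining a sweepout with $\max d(\psi)<dist(x,y)=W_d$, a contradiction.
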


This result should be compared with part \textit{ii)} of Theorem B of \cite{AMS}. Indeed, we prove that if three different minimizing geodesics $\gamma_1, \gamma_2, \gamma_3 : [0, a] \rightarrow S^2$ joining $\gamma_i(0)=x$ and $\gamma_i(a)=y$ satisfy that the three angles determined by the $\gamma_i^{\prime}(0)$ at the origin of $T_xS^2$, and the angles determined by the vectors $-\gamma_i^{\prime}(a)$ at the origin of $T_yS^2$, all measure at most $\pi$, then the set $\{\gamma_1, \gamma_2, \gamma_3\}$ is simultaneously stationary with endpoints $x$ and $y$. In order to achieve this result, we adapt the techniques from that article to produce $2$-parameter sweepouts, as in Definition \ref{defi-sweepout}. But, in the present setting, one still needs to overcome an extra difficulty. Namely, we must prove that there exists either a set of two minimizing geodesics which is simultaneously stationary with endpoints $x$ and $y$, or a set $\{\gamma_1, \gamma_2, \gamma_3\}$ of three minimizing geodesics satisfying the angular hypotheses above. Our strategy to achieve this involves proving the following convex geometry lemma of independent interest.

\begin{lemm}\label{lemm-convex-geometry}
	Let $v_i$, $w_i \in \mathbb{R}^2$ be unit vectors and $\lambda_i \in (0,1)$, $i=1, 2, 3, 4$. Suppose that the vectors $v_1$, $v_2$, $v_3$, and $v_4$ are disposed in this order as one moves around the unit circle in the clockwise direction, while the vectors $w_1$, $w_2$, $w_3$, and $w_4$ appear in this order in the counterclockwise direction. In addition, assume that $\sum_{i=1}^4 \lambda_i =1$ and $\sum_{i=1}^4 \lambda_i v_i = 0 = \sum_{i=1}^4 \lambda_i w_i$.

	Then, there exists a choice of three indices $A\subset \{1, 2, 3, 4\}$, $\#A =3$, such that the origin of $\mathbb{R}^2$ belongs to both of the triangles with vertices at the points contained in the set $\{v_i : i \in A\}$ or at the points in $\{w_i : i \in A\}$. 
\end{lemm}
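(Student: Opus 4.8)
The plan is to recast the conclusion dually. Writing $\mathcal{G}_v = \{\, j \in \{1,2,3,4\} : 0 \in \triangle(v_i : i \neq j)\,\}$ and $\mathcal{G}_w$ analogously, the assertion is precisely that $\mathcal{G}_v \cap \mathcal{G}_w \neq \emptyset$: any common index $j$ yields the desired set $A = \{1,2,3,4\} \setminus \{j\}$. Since $\sum_i \lambda_i v_i = 0$ with all $\lambda_i > 0$, the origin is an interior point of the convex quadrilateral $\mathrm{conv}\{v_1,v_2,v_3,v_4\}$ (four distinct unit vectors span $\mathbb{R}^2$), with vertices in the cyclic order $v_1v_2v_3v_4$, and likewise for the $w_i$. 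I will first dispose of a degenerate case: if two of the $v_i$ are antipodal, they cannot be consecutive (an antipodal edge of the quadrilateral would pass through the interior point $0$), so they form a diagonal pair, the origin lies on that diagonal, and a direct check shows $\#\mathcal{G}_v \geq 3$. Since $\#\mathcal{G}_w \geq 2$ always (see below), this forces $\mathcal{G}_v \cap \mathcal{G}_w \neq \emptyset$, and similarly if two $w_i$ are antipodal. So from now on no two $v_i$, and no two $w_i$, are antipodal.

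Next I analyze the structure of $\mathcal{G}_v$. Consider the line segment $\Delta_v = \{\mu \in \mathbb{R}^4 : \mu_i \geq 0,\ \sum_i \mu_i = 1,\ \sum_i \mu_i v_i = 0\}$; it is a segment because $\ker[v_1|v_2|v_3|v_4]$ is two-dimensional, and it contains $\lambda$. Each of its two endpoints has exactly one vanishing coordinate — say with indices $p$ and $q$ — because if two coordinates vanished at an endpoint the remaining two would force an antipodal pair, and $p \neq q$ since otherwise $\Delta_v \subset \{\mu_p = 0\}$ would force $\lambda_p = 0$. At the endpoint with $\mu_p = 0$ the three nonzero coordinates are the (unique) barycentric coordinates of $0$ in $\triangle(v_i : i \neq p)$, hence positive, so $0$ lies in the interior of that triangle and $p \in \mathcal{G}_v$; likewise $q \in \mathcal{G}_v$. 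Since no two $v_i$ are antipodal, neither diagonal of $\mathrm{conv}\{v_1,\dots,v_4\}$ meets $0$, so $0$ lies in exactly one of the two triangles cut off by each diagonal; thus $\#\mathcal{G}_v = 2$ and $\mathcal{G}_v = \{p,q\}$.

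The key quantitative point is then that $\lambda_p + \lambda_q < \tfrac12$. This rests on the elementary observation that the barycentric coordinates of the origin inside a triangle whose vertices are pairwise distinct unit vectors $u_1,u_2,u_3$ are all $< \tfrac12$: if $\sum \nu_i = 1$, $\nu_i > 0$ and $\sum \nu_i u_i = 0$, then $\nu_1 = |\nu_1 u_1| = |\nu_2 u_2 + \nu_3 u_3| < \nu_2 + \nu_3 = 1 - \nu_1$, the inequality being strict because $u_2 \neq u_3$. Applying this at the endpoint of $\Delta_v$ with $\mu_p = 0$ (whose three nonzero coordinates barycentrically represent $0$ in a triangle one of whose distinct unit vertices is $v_q$) gives $\mu_q < \tfrac12$, i.e. $\mu_p + \mu_q < \tfrac12$ there; by symmetry the same holds at the other endpoint, and hence on all of $\Delta_v$ since $\mu \mapsto \mu_p + \mu_q$ is affine. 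In particular $\lambda_p + \lambda_q < \tfrac12$, and symmetrically $\mathcal{G}_w = \{r,s\}$ with $\lambda_r + \lambda_s < \tfrac12$.

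Finally, suppose for contradiction that $\mathcal{G}_v \cap \mathcal{G}_w = \emptyset$. Then, both being $2$-element subsets of $\{1,2,3,4\}$, we have $\{r,s\} = \{1,2,3,4\} \setminus \{p,q\}$, whence $(\lambda_p + \lambda_q) + (\lambda_r + \lambda_s) = \sum_i \lambda_i = 1$ — impossible with both summands strictly below $\tfrac12$. Hence $\mathcal{G}_v \cap \mathcal{G}_w \neq \emptyset$, which is the lemma. The only real obstacle is locating this route: a frontal assault via the angular positions of the $v_i$ and $w_i$ — where the clockwise/counterclockwise hypotheses enter, through the placement of the diagonals — degenerates into a trigonometric case analysis, and the decisive simplification is to notice that everything is governed by the single scalar $\lambda_p + \lambda_q$ attached to the pair of "omittable" indices, together with the fact that the origin's barycentric coordinates among distinct unit vectors never reach $\tfrac12$.
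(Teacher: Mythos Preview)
Your proof is correct and substantially cleaner than the paper's. The paper proceeds by an explicit trigonometric computation: it labels the central angles $\theta_i$ (resp.\ $\theta_i'$) between consecutive $v_i$ (resp.\ $w_i$), translates the contradiction hypothesis into four angle inequalities, uses the law of cosines on the associated side-length-$\lambda_i$ quadrilateral to express everything in terms of $\theta_4$, $\theta_4'$ and the $\lambda_i$, and then solves the resulting cubic equations in $\cos\theta_4$ and $\cos\theta_4'$ by exhibiting their roots explicitly and checking case-by-case that the required roots in $(-1,1)$ cannot coexist. Your argument bypasses all of this by observing that the set $\Delta_v$ of affine representations of $0$ is a segment whose endpoints single out the two ``omittable'' indices $p,q$, together with the elementary fact that the barycentric coordinates of the origin in a triangle of distinct unit vectors are each below $\tfrac12$; the contradiction then drops out of $\lambda_p+\lambda_q<\tfrac12$ and $\lambda_r+\lambda_s<\tfrac12$ with $\{p,q\}\cup\{r,s\}=\{1,2,3,4\}$.

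Two further remarks. First, your proof never uses the hypothesis that the $v_i$ are oriented clockwise while the $w_i$ are counterclockwise; you only use that each family consists of four distinct unit vectors with the origin a strict convex combination. So you have in fact proved a slightly stronger statement than the lemma as written. Second, a minor point of exposition: the degenerate case and the bound $\#\mathcal{G}_w\ge 2$ are handled correctly but tersely; it may be worth spelling out that if $v_a=-v_b$ then both triangles containing $v_a$ and $v_b$ trivially contain $0$, and that comparing $\lambda_a$ with $\lambda_b$ yields a third index in $\mathcal{G}_v$.
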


The hypothesis about non-existence of stable closed geodesics can not be removed, as one can see in Example \ref{exam-critical-value-below-width} in Section \ref{sect-intrinsic-thmb}. The Calabi-Croke sphere is an example of a configuration with three simultaneously stationary minimizing geodesics, as described in Theorem \ref{thm-3-simult-st}. This is the content of Example \ref{subsect-gluing-2-triangles} in Section \ref{sect-intrinsic-thmb}. In the case that the critical point realizing the width bounds a pair of minimizing geodesics that form a closed geodesic, we prove that the Morse index of this curve is equal to one.

\begin{thm}\label{thm-index-bound}
Let $(S^2,g)$ be a Riemannian sphere. Assume that $(S^2, g)$ contains no closed geodesics that are stable. If $\{x, y\}\subset S^2$ is a critical point of the distance function such that $dist(x,y) = W_d(S^2, g)$, and there exists two distinct minimizing geodesics $\gamma_1$ and $\gamma_2$ joining $x$ and $y$ such that $\gamma_1\cup\gamma_2=\gamma$ is a closed geodesic, then $\gamma$ has Morse index equal to one.
\end{thm}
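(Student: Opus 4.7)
The plan is to prove $\mathrm{Ind}(\gamma) \geq 1$ and $\mathrm{Ind}(\gamma) \leq 1$ separately. The lower bound is immediate from the hypothesis: since $(S^2, g)$ has no stable closed geodesics, $\mathrm{Ind}(\gamma) \geq 1$. For the upper bound I argue by contradiction: assuming $\mathrm{Ind}(\gamma) \geq 2$, I will construct a sweepout $\tilde\phi : S^2 \to \mathcal{P}_\Sigma$ with $\max_{z \in S^2} \mathrm{dist}(\tilde\phi(z)) < W_d$, contradicting the infimum characterization of $W_d(S^2, g)$.

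First, I reduce the index computation to a $2 \times 2$ boundary form. Parametrize $\gamma : [0, 2a] \to S^2$ by arclength so that $\gamma(0) = \gamma(2a) = x$ and $\gamma(a) = y$; the halves $\gamma_1, \gamma_2$ are minimizing geodesics from $x$ to $y$. Since $\dim S^2 = 2$, normal fields along $\gamma$ are scalar functions $f$, and $\mathcal{Q}(f, f) = \int_0^{2a}((f')^2 - K f^2)\, ds$. Strict minimality of each half yields (after a small metric perturbation if needed to avoid boundary conjugacy) positive definiteness of the Dirichlet restriction of $\mathcal{Q}_{\gamma_i}$ on fields vanishing at the endpoints. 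Any periodic $f$ decomposes $\mathcal{Q}$-orthogonally as $f = J^f + f_0$, where $J^f$ is the piecewise Jacobi extension of the boundary values of $f$ and $f_0$ vanishes at $s = 0, a, 2a$, yielding $\mathrm{Ind}(\gamma) = \mathrm{Ind}(\mathcal{B})$ for the $2 \times 2$ quadratic form $\mathcal{B}(w_x, w_y) := \mathcal{Q}_{\gamma_1}(J_1^{(w_x, w_y)}) + \mathcal{Q}_{\gamma_2}(J_2^{(w_x, w_y)})$ on the boundary-value space $\mathbb{R}^2$. Under the assumption $\mathrm{Ind}(\gamma) \geq 2$, the form $\mathcal{B}$ is negative definite.

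Next, negativity of $\mathcal{B}$ produces a $2$-parameter family of pair distances strictly below $W_d$. For small $w = (w_x, w_y) \in \mathbb{R}^2$, set $\Psi(w) := \{\exp_x(w_x n(x)), \exp_y(w_y n(y))\} \in \mathcal{P}_\Sigma$, where $n$ is a unit normal along $\gamma$. Using the exponential of $J_i^{(w_x, w_y)}$ along $\gamma_i$ as a comparison path between the perturbed endpoints, the standard second-variation expansion gives
\[
\mathrm{dist}(\Psi(w)) \leq \frac{1}{2}\bigl(L(\gamma_1^{(w)}) + L(\gamma_2^{(w)})\bigr) = W_d + \frac{1}{4}\, \mathcal{B}(w_x, w_y) + O(\|w\|^3).
\]
Negative definiteness of $\mathcal{B}$ supplies $c > 0$ with $\mathrm{dist}(\Psi(w)) \leq W_d - c\|w\|^2$ for $\|w\|$ small. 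Rotating this construction along the $S^1$-orbit $\mathcal{O} := \{\{\gamma(t), \gamma(t + a)\} : t \in \mathbb{R}/2a\mathbb{Z}\}$ of critical pairs extends it to a neighborhood $\mathcal{U}$ of $\mathcal{O}$ in $\mathcal{P}_\Sigma$ on which $\mathrm{dist} < W_d$ off $\mathcal{O}$.

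The contradiction is then assembled via pull-tight and cut-and-paste, which is also where I expect the main obstacle to lie. Starting from a minimizing sequence $\phi^j$ of sweepouts with $\max \mathrm{dist} \circ \phi^j \to W_d$, a pull-tight procedure (adapted from the one-parameter distance min-max in \cite{AMS}) forces the near-maxima of $\phi^j$ into $\mathcal{U}$ and into analogous neighborhoods of any other critical pairs at level $W_d$; a cut-and-paste replacement of $\phi^j$ on the preimages of these regions by maps valued in the $2$-parameter descending family $\Psi$ produces $\tilde\phi$ with $\max \mathrm{dist} \circ \tilde\phi < W_d$. The delicate point is this last step: since the critical set at level $W_d$ includes the positive-dimensional orbit $\mathcal{O}$ and $\mathcal{P}_\Sigma \cong \mathbb{CP}^2$ has non-trivial $\pi_2$, one must ensure that the replacement disks map into a contractible portion of $\mathcal{U}$, so that the modification preserves the homotopy class of $\Phi_0$ required by Definition \ref{defi-sweepout}. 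The plan to handle this is to adapt the deformation-theoretic toolkit of \cite{AMS} to the $2$-parameter sweepout setting developed earlier in the present paper.
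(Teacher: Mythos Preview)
Your strategy diverges substantially from the paper's, and the gap you yourself flag in the final paragraph is fatal as written. You attempt to reach the contradiction by taking a \emph{minimizing sequence} of sweepouts and surgically modifying it near the top level using the local $2$-parameter descent coming from $\mathrm{Ind}(\gamma)\ge 2$. The problem is that a pull-tight argument forces the near-maxima of $\phi^j$ toward the \emph{entire} critical set at level $W_d$, not just toward pairs associated with $\gamma$. Your escape mechanism is built only near $\gamma$; if there is another critical pair $\{x',y'\}$ at level $W_d$ unrelated to $\gamma$ (nothing in the hypotheses rules this out), you have no way to push the sweepout below $W_d$ there, and the cut-and-paste cannot close. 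Relatedly, your ``$S^1$-orbit $\mathcal{O}=\{\{\gamma(t),\gamma(t+a)\}\}$ of critical pairs'' is not justified: for $t\neq 0$ the two halves of $\gamma$ need not be minimizing, so these pairs need not be critical and need not lie at level $W_d$. There is also looseness in the index reduction: the identification $\mathrm{Ind}(\gamma)=\mathrm{Ind}(\mathcal B)$ requires that the Dirichlet forms on the halves be strictly positive, and invoking a ``small metric perturbation'' to avoid endpoint conjugacy is not legitimate when the conclusion concerns the fixed metric $g$.

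The paper sidesteps all of this by never touching a minimizing sequence. It first builds an \emph{explicit} sweepout $\psi$ with $\mathrm{dist}(\psi(p))\le \mathrm{dist}(x,y)$ (Proposition~\ref{exis-sweep-closed-geod}): push $\gamma$ to both sides by the flow $\rho_s$ of a \emph{positive} first eigenfield (Lemma~\ref{funct-exist-lem}), so $L(\rho_s(\gamma))<L(\gamma)$ for $s\neq 0$ and the pushed curves are convex; then run Birkhoff curve shortening from $\rho_{\pm\varepsilon}(\gamma)$ into each disk. The resulting sweepout attains $\mathrm{dist}(x,y)$ only at the single parameter corresponding to $\{x,y\}$. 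If $\mathrm{Ind}(\gamma)\ge 2$, choose a second normal field $Z$ with $\mathcal Q(Z,V)=0$ and $\mathcal Q(Z,Z)<0$; then $(s,\bar s)\mapsto L(\zeta_{\bar s}(\rho_s(\gamma)))$ has a strict local maximum at $(0,0)$, and one replaces the annular portion $s\mapsto \rho_s(\gamma)$ of the sweepout by $s\mapsto \zeta_{\bar s(s)}(\rho_s(\gamma))$ for a suitable bump $\bar s(s)$. This yields a sweepout with $\mathrm{dist}(\psi(p))<\mathrm{dist}(x,y)$ for every $p$, contradicting $\mathrm{dist}(x,y)=W_d$ directly. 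The point is that the paper manufactures its own sweepout whose \emph{only} top-level pair is $\{x,y\}$, so the index hypothesis on $\gamma$ alone suffices; your route through an arbitrary minimizing sequence forces you to control critical points you know nothing about.
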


A direct consequence of our results is the following existence theorem, of independent interest:

\begin{cor}
Let $(S^2,g)$ be a Riemannian sphere which does not contain stable closed geodesics. Then, there exists either two distinct minimizing geodesics $\gamma_1$ and $\gamma_2$ such that $\gamma_1\cup\gamma_2=\gamma$ is a closed geodesic of Morse index one, or a set of three simultaneously stationary minimizing geodesics.
\end{cor}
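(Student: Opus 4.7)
The plan is to chain the three main theorems of this section that have already been stated in the excerpt, with no further geometric input required. First I would invoke Theorem \ref{thm-W_d-critical} to obtain a pair $\{x,y\} \in \mathcal{P}_{S^2}$ that is a critical point of the distance function with $\mathrm{dist}(x,y) = W_d(S^2,g) > 0$. This is the only existence statement needed to get started, and it does not use the hypothesis about stable closed geodesics.

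Next I would feed $\{x,y\}$ into Theorem \ref{thm-3-simult-st}. Since by hypothesis $(S^2,g)$ has no stable closed geodesics, that theorem produces an integer $k \in \{2,3\}$ and a set of $k$ simultaneously stationary minimizing geodesics with endpoints $x$ and $y$. The corollary is then a pure case split on $k$. If $k=3$, the second alternative of the corollary holds immediately.

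If $k=2$, let $\{\gamma_1,\gamma_2\}$ be the simultaneously stationary pair. Here I would use the observation recorded right after Definition \ref{defi-simultaneously-stationary}: in the intrinsic case $\Sigma = M = S^2$ we have $\nu_{\gamma_i}^T = \nu_{\gamma_i}$, so $c_1\nu_{\gamma_1} + c_2\nu_{\gamma_2} = 0$ with $c_1+c_2=1$ and $c_i \in (0,1)$ forces $c_1=c_2=\tfrac12$ together with $\gamma_1'(0) = -\gamma_2'(0)$ and $\gamma_1'(a) = -\gamma_2'(a)$ (unit vectors of opposite directions). Thus $\gamma := \gamma_1 \cup \gamma_2$ is a smooth simple closed geodesic of $S^2$. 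I would then apply Theorem \ref{thm-index-bound} to this $\gamma$ to conclude it has Morse index equal to one, giving the first alternative of the corollary.

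Since the two cases are exhaustive and each yields one of the two dichotomous conclusions, the corollary is established. I do not expect any real obstacle: all the difficulty of this statement is packed into the proofs of Theorems \ref{thm-W_d-critical}, \ref{thm-3-simult-st}, and \ref{thm-index-bound}. The only routine verification is the linear-algebra observation that a simultaneously stationary pair in the intrinsic case glues smoothly at the endpoints; this is immediate from the unit-norm constraint on the $\gamma_i'$ and from $c_1+c_2=1$.
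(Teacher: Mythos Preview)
Your proposal is correct and is exactly the argument the paper intends: the corollary is stated as ``a direct consequence of our results'' with no separate proof, and the chain Theorem~\ref{thm-W_d-critical} $\to$ Theorem~\ref{thm-3-simult-st} $\to$ Theorem~\ref{thm-index-bound}, together with the remark after Definition~\ref{defi-simultaneously-stationary} that a simultaneously stationary pair in the intrinsic case glues to a closed geodesic, is precisely that consequence.
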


Calabi and Cao \cite{CalabiCao} proved that the first Almgren-Pitts min-max width $\omega_1(S^2, g)$ of positively curved spheres are realized by the length of a simple closed geodesic. A consequence of this fact and the methods applied in the proof of our Theorem \ref{thm-index-bound} imply the following comparison result.

\begin{thm}\label{thm-comparison}
Let $(S^2,g)$ be a positively curved Riemannian sphere. Then, the first min-max width of the Almgren-Pitts theory and the min-max width associated to the distance function are related by the inequality
$$
W_d(S^2,g) \leq \frac{\omega_1(S^2, g)}{2}.
$$
Moreover, if equality holds, then there are two distinct minimizing geodesics $\gamma_1$ and $\gamma_2$ such that $\gamma_1\cup\gamma_2$ is a closed geodesic of index one.
\end{thm}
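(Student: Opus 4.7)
My plan is to prove the inequality by promoting a nearly-optimal Almgren--Pitts sweepout into a $2$-parameter sweepout of the space of pairs, and then to settle the equality case by combining the resulting structure with Theorems \ref{thm-W_d-critical}, \ref{thm-3-simult-st}, and \ref{thm-index-bound}.

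For the inequality I will invoke Calabi--Cao: for every $\epsilon>0$, there is a continuous $1$-parameter sweepout $\{\Gamma_s\}_{s\in[0,1]}$ of $(S^2,g)$ by closed curves with $\Gamma_0,\Gamma_1$ constant and $\sup_s\ell(\Gamma_s)\le L+\epsilon$, where $L=\omega_1(S^2,g)$ is realized by a simple closed geodesic $\gamma$. Parametrizing each $\Gamma_s:\mathbb{R}/\mathbb{Z}\to S^2$ proportionally to arclength, the naive pair map
\[
\Psi(s,\tau):=\{\Gamma_s(\tau),\Gamma_s(\tau+\tfrac12)\}
\]
satisfies $dist(\Psi_1,\Psi_2)\le \ell(\Gamma_s)/2\le(L+\epsilon)/2$ but lies in the class $2[\Phi_0]$ of $\pi_2(\mathcal{P}_\Sigma)=\mathbb{Z}$. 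Adapting the constructions of \cite{AMS} to the $2$-parameter setting, I would modify $\Psi$ on a neighborhood of the degenerate parameter endpoints so that the resulting map represents $[\Phi_0]$, admits continuous coordinate lifts as required by Definition \ref{defi-sweepout}, and preserves the distance bound on the bulk. This yields a genuine sweepout $\tilde\Psi_\epsilon$ with $\max dist(\tilde\Psi_1,\tilde\Psi_2)\le L/2+O(\epsilon)$, and letting $\epsilon\to0$ gives $W_d(S^2,g)\le \omega_1(S^2,g)/2$.

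For the equality case, assume $W_d=L/2$. I will refine the Calabi--Cao sweepout above so that $\Gamma_{1/2}=\gamma$ and $\ell(\Gamma_s)<L$ for $s\ne1/2$, which is achievable by Birkhoff curve-shortening applied to a generic initial sweepout on a positively curved sphere (exploiting that $\gamma$ is a nondegenerate length-critical curve). If no pair $\{p,q\}=\{\gamma(t_0),\gamma(t_0+L/2)\}$ on $\gamma$ satisfied $dist(p,q)=L/2$, a small perturbation of the associated sweepout would strictly reduce its maximum, contradicting $W_d=L/2$. Hence such a pair exists, and the two halves of $\gamma$ joining $p$ and $q$ are paths of length $L/2=dist(p,q)$, hence minimizing geodesics $\gamma_1,\gamma_2$, with $\gamma_1\cup\gamma_2=\gamma$ a closed geodesic. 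By the remark following Definition \ref{defi-simultaneously-stationary}, $\{\gamma_1,\gamma_2\}$ is simultaneously stationary, so $\{p,q\}$ is a critical pair of the distance at value $W_d$; Theorem \ref{thm-index-bound} (applicable since positive curvature forbids stable closed geodesics) then yields that $\gamma$ has Morse index one.

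The main obstacle I anticipate is the sweepout construction itself: the evident antipodal-pair map along $\Gamma_s$ doubles the expected homotopy class, and the $\mathbb{Z}_2$-quotient that would restore class one destroys the continuous coordinate lifts demanded by Definition \ref{defi-sweepout}. Overcoming this requires a careful interpolation near the parameter endpoints that halves the class while preserving both the distance bound and the lift structure. A secondary difficulty is the Birkhoff refinement needed to make $\gamma$ the strict length-maximizer of the sweepout, which is what permits the perturbation argument in the equality analysis.
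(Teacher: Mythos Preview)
Your proposal takes a substantially harder road than the paper, and the main obstacle you yourself flag is real and not obviously fixable.

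The paper's proof never touches an Almgren--Pitts sweepout of curves. It uses only the Calabi--Cao output, namely a simple closed geodesic $\gamma$ with $L(\gamma)=\omega_1(S^2,g)$, picks any $x,y\in\gamma$ bisecting it into equal arcs, and then applies Proposition~\ref{exis-sweep-closed-geod} directly (via Remark~\ref{rmk-prop-generaliz-closed-geodesics}, which drops the requirement that the two halves be minimizing). That proposition already builds a sweepout in the correct class $[\Phi_0]$: one pushes $\gamma$ off to both sides using a positive Jacobi eigenfunction (available since positive curvature rules out stable closed geodesics), then Birkhoff-shortens each pushed-off curve to a point inside its locally convex half-sphere. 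The resulting map has the form $p\mapsto\{F(p),F(s(p))\}$ with the second coordinate depending only on a radial parameter, hence null-homotopic, so the class is $[\Phi_0]$ by construction. This gives $W_d\le L(\gamma)/2$ with no $\epsilon$ and no homotopy-class repair.

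Your antipodal-pair map from a curve sweepout does land in class $2[\Phi_0]$, as you correctly compute, and the proposed fix---modifying near the degenerate endpoints while preserving both the distance bound and continuous coordinate lifts---is a genuine gap. Any such modification must kill one unit of degree from one of the coordinate maps $\Psi_i$; doing this by a homotopy supported where $\Gamma_s$ is short does not obviously keep $dist(\Psi_1,\Psi_2)\le L/2$, since during the modification the two coordinates are no longer constrained to lie on a single short curve. The paper's route shows this entire difficulty is avoidable.

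For the equality case your strategy is sound in outline, and your invocation of Theorem~\ref{thm-index-bound} for the index-one conclusion matches what the paper (implicitly) does. But again the paper gets there without refining any external sweepout: the specific sweepout from Proposition~\ref{exis-sweep-closed-geod} satisfies $dist(\psi(p))=L(\gamma)/2$ only at the single parameter where $\psi(p)=\{x,y\}$ on $\gamma$, so if the two halves of $\gamma$ failed to be minimizing one would have $\sup_p dist(\psi(p))<L(\gamma)/2$, contradicting $W_d=L(\gamma)/2$. Then $\{x,y\}$ is a critical pair at level $W_d$ bounded by two minimizing geodesics whose union is $\gamma$, and Theorem~\ref{thm-index-bound} gives index one.
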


\subsection{Intrinsic and extrinsic spheres of constant width}\label{subsect-constant-width}

The classical width of a compact region $\Omega \subset \mathbb{R}^n$ is defined as the shortest possible distance between two parallel hyperplanes $H_1$ and $H_2$ such that $\Omega$ is contained in the closure of the connected component of $\mathbb{R}^n\setminus (H_1\cup H_2)$ whose boundary is $H_1\cup H_2$. There is also a notion of width in a given direction.

\begin{defi}\label{defi-classical-width}
Let $\Omega$ be a compact region of $\mathbb{R}^n$, and $v\in \mathbb{R}^n$ be a unit vector, $|v|=1$. The width of $\Omega$ in the direction determined by $v$ is defined as the smallest possible distance $w(\Omega, v)$ between two hyperplanes $H_i$, $i=1,2$, orthogonal to $v$, and containing $\Omega$ between them, i.e., the hyperplanes $H_i = \{x \in \mathbb{R}^n : \langle x, v\rangle =a_i \}$, such that $a_1, a_2  \in \mathbb{R} $, $a_1\leq a_2$, and $\langle x, v\rangle \in [a_1, a_2]$, for every $x\in \Omega$. Using this terminology, the classical width of $\Omega$ is given by
$$
w(\Omega):= \min \{w(\Omega, v) : v \in \mathbb{R}^{n}, |v|=1\}.
$$ 
\end{defi}

Definition \ref{defi-classical-width} includes the case of the width of plane curves, considered and generalized in \cite{AMS}. There is a rich literature on curves and higher dimensional regions of constant width, i.e., regions satisfying $w(\Omega, v) = w(\Omega)$, for all $v \in \mathbb{R}^n$, of unit length. See, for instance, \cite{BonFen}, \cite{ChaGro}, and \cite{MarMonOli}. 

The notion of width of embedded circles considered in \cite{AMS} coincides with the classical one in the case of convex plane curves. Let us briefly recall that definition. Given an embedded circle $\Gamma$ in a complete Riemannian manifold $(M^n,g)$, one uses $\mathcal{P}_{\Gamma}$ to denote the space of subsets of $\Gamma$ consisting of one or two points. Then, $\mathcal{P}_{\Gamma}$ is homeomorphic to the M\"obius band, and one defines a notion of \textit{sweepout of} $\Gamma$ as a continuous map $\phi : [0,1]\rightarrow \mathcal{P}_{\Gamma}$ for which there exist continuous maps $x, y : [0,1] \rightarrow \Gamma$ such that $\phi(t) = \{x(t), y(t)\}$, with $x(0)=y(0)$ and $x(1)=y(1)$. In addition, the points $x(t)$ and $y(t)$ bound an arc $C(t)$ of $\Gamma$ which varies continuously and such that $C(0)=\varnothing$ and $C(1)=\Gamma$. The min-max width of $\Gamma$ introduced in \cite{AMS} is defined by
$$
\mathcal{S}(\Gamma) = \inf \bigg\{  \max_{t\in [0,1]} dist(\phi(t)) : \phi \text{ is a sweepout}\bigg\}.
$$

The notion of plane curves of constant width, studied at least since Euler, is also generalized in the setting of \cite{AMS}, even regarding the co-dimension of the curve. More precisely, the generalization consists in considering embedded circles $\Gamma$ on which the min-max invariant $\mathcal{S}(\Gamma)$ equals the extrinsic diameter of $\Gamma$. Indeed, Theorem C of \cite{AMS} relates this notion to the property of every point in $\Gamma$ being diametrically distant from another point of this curve. Indeed, Reidemeister \cite{Rei} characterized the regions of constant width in Euclidean space (in all dimensions) by the property that every boundary point is the endpoint of a diameter. In \cite{AMS}, the authors show that other properties of curves of constant width generalize to the Riemannian setting, as in their Theorem F, on which they prove a version of the fact that centrally symmetric plane curves of constant width are circles.
 
The higher dimensional case of the classical width is also well studied, with many nice examples and interesting geometric inequalities. The simplest example of a $3$-dimensional region of constant width is the solid obtained by the revolution of the Reuleaux triangle around one of its axes of symmetry. Recall that the Reuleaux triangle is the plane region obtained by the intersection of three circles of radius $r>0$ centered at the vertices of an equilateral triangle of side of length $r$. For other examples, such as the Meissner bodies, see \cite{MarMonOli} and references therein. 

A well known theorem of Barbier tells us that plane curves (or regions) of constant width have perimeter given by $\pi$ times the width $w$. This result does not generalize to $3$-dimensional regions in terms of the width and the boundary area. But there are still other very interesting relationships between geometric quantities of solids of constant width $\Omega \subset \mathbb{R}^3$, such as
$$
\text{Vol}(\Omega) = \frac{w(\Omega) \text{Area}(\partial \Omega)}{2} - \frac{\pi w(\Omega)^3}{3},
$$ 
relating the volume, $\text{Vol}(\Omega)$, and the surface area, $\text{Area}(\partial \Omega)$, in terms of the width. See Theorem 12.1.4 of \cite{MarMonOli}, for instance. This result generalizes to higher dimensions in the form of relationships between the quermassintegrals. One important problem is to find the minimizer of the $n$-dimensional volume (or surface area in the case $n=3$) among $n$-dimensional solids of constant width $w$ in $\mathbb{R}^n$. The Blaschke-Lebesgue Theorem says that the Reuleaux triangle is the area minimizer among constant width plane curves of fixed width. Despite the efforts, the problem is still open for $n>2$. See the Notes section of Chapter 14 in \cite{MarMonOli} for a discussion on the progresses towards a solution to this problem.

In analogy with the case of embedded circles studied in \cite{AMS}, we observed that $W_d(\Sigma)$ coincides with the classical width, whenever $\Sigma$ is the boundary of a convex region in $\mathbb{R}^3$. We state this result precisely in what follows. The proofs of the results stated in this subsection are postponed to Section \ref{sect-const-width-proofs}. 

\begin{prop}\label{prop-widths-coincide}
Let $\Omega$ be a compact convex regular domain of $\mathbb{R}^3$. Then, $W_d(\partial \Omega) = w(\Omega)$, i.e., the min-max width associated to the distance function, as in Definition \ref{defi-width}, coincides with the classical width of $\Omega$. 
\end{prop}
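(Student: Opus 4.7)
The plan is to prove the two inequalities $W_d(\partial\Omega)\le w(\Omega)$ and $W_d(\partial\Omega)\ge w(\Omega)$ separately: the first by exhibiting an explicit sweepout built from chords parallel to a width-realizing direction, the second by a $\mathbb{Z}/2$ intersection theoretic argument inside $\mathcal{P}_{\partial\Omega}\cong \mathbb{CP}^2$. I will treat the case of smooth strictly convex $\Omega$; the general case should follow by standard approximation.

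For the upper bound, fix a unit vector $v_0$ realizing $w(\Omega)=w(\Omega,v_0)$, let $\pi\colon \mathbb{R}^3\to v_0^{\perp}$ be orthogonal projection, and let $\Gamma=\{p\in \partial\Omega : \langle \nu(p),v_0\rangle=0\}$ be the silhouette, which divides $\partial\Omega$ into closed topological disks $\overline{D^{\pm}}$ each mapped homeomorphically onto $D=\pi(\Omega)$ by $\pi$. For $p\in \overline{D^+}$ let $q(p)\in \overline{D^-}$ be the other endpoint of the chord $\pi^{-1}(\pi(p))\cap \overline{\Omega}$; this chord is parallel to $v_0$, has length at most $w(\Omega,v_0)=w(\Omega)$, and degenerates to the point $p$ exactly on $\Gamma$. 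I would then set
\[
\phi(p)=\begin{cases} \{p,q(p)\}, & p\in \overline{D^+}, \\ \{p\}, & p\in \overline{D^-},\end{cases}
\]
which is continuous with $\max_p \mathrm{dist}(\phi_1(p),\phi_2(p))\le w(\Omega)$ and admits continuous lifts $\phi_1=\mathrm{id}$ and $\phi_2=F$, where $F=q$ on $\overline{D^+}$ and $F=\mathrm{id}$ on $\overline{D^-}$. To check that $\phi$ is a sweepout I would use the identity $[\{p,F(p)\}]=1+\deg F$ in $\pi_2(\mathcal{P}_{\partial\Omega})=\pi_2(\mathbb{CP}^2)=\mathbb{Z}$ and compute $\deg F$ by counting signed preimages of a generic $y\in D^-$: the two preimages $y$ and $q^{-1}(y)\in D^+$ contribute $+1$ and $-1$, respectively, because $\pi|_{D^+}$ preserves the orientation inherited from the outward normal of $\partial\Omega$ while $\pi|_{D^-}$ reverses it. Hence $\deg F=0$, $[\phi]=1=[\Phi_0]$, and $\phi$ is a sweepout.

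For the lower bound, introduce the set of Gauss antipodal pairs
\[
A=\bigl\{\{\nu^{-1}(v),\nu^{-1}(-v)\}:v\in S^2\bigr\}\subset \mathcal{P}_{\partial\Omega},
\]
which is an embedded $\mathbb{RP}^2$: under strict convexity the involution $p\mapsto \nu^{-1}(-\nu(p))$ is free on $\partial\Omega$, so the quotient realizing $A$ is $\mathbb{RP}^2$. A continuous deformation of $\Omega$ through smooth strictly convex bodies to a round ball carries $A$ through embedded projective planes to the antipodal $\mathbb{RP}^2\subset SP^2(S^2)=\mathbb{CP}^2$, and I would show its class is the nontrivial element of $H_2(\mathbb{CP}^2;\mathbb{Z}/2)=\mathbb{Z}/2$ by computing the mod $2$ intersection with the projective line of pairs containing $\infty\in \mathbb{CP}^1=S^2$: the only antipodal pair on that line is $\{0,\infty\}$, so $[A]\cdot [\mathbb{CP}^1]=1$. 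Since any sweepout $\phi$ has $\phi_{*}[S^2]=\pm [\mathbb{CP}^1]\neq 0$ mod $2$, the long exact sequence of $(\mathbb{CP}^2,\mathbb{CP}^2\setminus A)$ combined with Poincar\'e--Lefschetz duality forces $\phi(S^2)\cap A\neq \varnothing$. At such a crossing $x\in S^2$ one has $\phi(x)=\{\nu^{-1}(v),\nu^{-1}(-v)\}$ for some $v$ and
\[
|\phi_1(x)-\phi_2(x)|\ge \langle \nu^{-1}(v)-\nu^{-1}(-v),v\rangle=w(\Omega,v)\ge w(\Omega),
\]
whence $W_d(\partial\Omega)\ge w(\Omega)$.

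The hard part will be the topological step in the lower bound: identifying $[A]$ as the nontrivial class in $H_2(\mathbb{CP}^2;\mathbb{Z}/2)$ via the deformation to the round ball, and converting the mod $2$ intersection pairing into a nonempty set theoretic intersection via the Poincar\'e--Lefschetz exact sequence. The upper bound is essentially an explicit construction; its only delicate ingredient is the orientation sign in the degree computation of $F$, which must carefully track how $\pi$ behaves on the two hemispheres.
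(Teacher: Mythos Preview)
Your upper bound is essentially the paper's construction. The paper defines the same map $F$ (identity on $\Sigma^-$, ``other endpoint of the $v_0$--parallel chord'' on $\Sigma^+$) and checks it is a sweepout; the only cosmetic difference is that instead of your signed preimage count giving $\deg F=0$, the paper simply observes $F(\Sigma)\subset\Sigma^-$, a disk, so $F$ is null-homotopic and $\{p,F(p)\}$ is homotopic to $\Phi_0$. Your formula $[\{p,F(p)\}]=\deg(\mathrm{id})+\deg F$ is correct (it follows from the computations in Sections~\ref{subsect-std-sweep}--\ref{subsect-not-sweepout}), but the disk argument is quicker.

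Your lower bound takes a genuinely different route. The paper does \emph{not} do intersection theory here: it invokes Theorem~\ref{thm-W_d-critical} to say $W_d(\partial\Omega)=\mathrm{dist}(x_0,y_0)$ for some critical pair $\{x_0,y_0\}$, and since geodesics in $\mathbb{R}^3$ are unique, the critical condition forces the segment $[x_0,y_0]$ to be an orthogonal chord; every orthogonal chord realizes a directional width, hence has length $\ge w(\Omega)$. Your argument instead identifies the Gauss-antipodal set $A\cong\mathbb{RP}^2$ as the nonzero class in $H_2(\mathbb{CP}^2;\mathbb{Z}/2)$ and uses $[\phi(S^2)]\cdot[A]=1$ to force a set-theoretic intersection, then bounds the distance at that pair below by the support-function identity $\langle\nu^{-1}(v)-\nu^{-1}(-v),v\rangle=w(\Omega,v)$. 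This is correct: the single intersection of $A$ with a $\mathbb{CP}^1=\{\{x,x_0\}\}$ is transverse (the two tangent planes in $T_{x_0}\Sigma\times T_{y_0}\Sigma$ meet only at the origin), and the Poincar\'e-dual class of $A$ vanishes on $\mathbb{CP}^2\setminus A$, so a sweepout missing $A$ would pair trivially with it. What you gain is a self-contained argument that bypasses Theorem~\ref{thm-W_d-critical}; what the paper gains is brevity, since the min-max theorem is already in hand and the conclusion drops out in two lines without any homology.
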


Moreover, we obtained generalized versions of the Reidemeister's characterization, and the uniqueness of centrally symmetric regions associated to the notion of min-max width studied in the present paper. More precisely, we have the following results.

\begin{thm}\label{thm-Reidemeister}
Let $\Sigma$ be a smoothly embedded $2$-sphere in a complete Riemannian manifold $(M^n,g)$ (possibly $\Sigma = M^2$). 
\begin{enumerate}
\item[(I)] If $W_d(\Sigma)$ equals the extrinsic diameter of $\Sigma$, then for each $x \in \Sigma$ there exists $y \in \Sigma$, such that $dist(x,y)= W_d(\Sigma)=diam(\Sigma)$. 
\item[(II)] Suppose that there exists a continuous map $\Psi : \Sigma \rightarrow \Sigma$ such that $dist(x, \Psi(x)) = diam(\Sigma)$, for every $x\in \Sigma$. Then, $W_d(\Sigma) = diam(\Sigma)$.
\end{enumerate}
Moreover, in the case $\Sigma = M^2$, we have that the width $W_d(\Sigma)$ coincides with the diameter of $(\Sigma, g)$ if, and only if, for each $x \in \Sigma$ there exists $y \in \Sigma$, such that $dist(x,y)= diam(\Sigma)$.
\end{thm}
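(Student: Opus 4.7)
For Part (I), the argument is essentially tautological. Fix any basepoint $x_0 \in \Sigma$ and consider the standard sweepout $\Phi_0(x) = \{x, x_0\}$: its maximum pairwise distance equals $\max_{y \in \Sigma} dist(x_0, y)$, hence $W_d(\Sigma) \leq \max_y dist(x_0, y)$. The equality $W_d(\Sigma) = diam(\Sigma)$ then forces $\max_y dist(x_0, y) = diam(\Sigma)$, and by compactness this maximum is attained at some $y \in \Sigma$, which is the required diametric partner.

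For Part (II), my plan is a homological intersection argument in $\Sigma \times \Sigma \cong S^2 \times S^2$. Suppose for contradiction that some sweepout $\phi$ satisfies $\max dist(\phi_1, \phi_2) < D := diam(\Sigma)$, so the lift $F := (\phi_1, \phi_2) : S^2 \to \Sigma \times \Sigma$ is disjoint from the graph $\Gamma_\Psi := \{(x, \Psi(x)) : x \in \Sigma\}$. Since $dist(x, \Psi(x)) = D > 0$, the map $\Psi$ is fixed-point-free, so Lefschetz's theorem on $S^2$ forces $\deg \Psi = -1$. Writing $\alpha = [\Sigma \times \{*\}]$, $\beta = [\{*\} \times \Sigma]$ for the standard generators of $H_2(\Sigma \times \Sigma;\mathbb{Z})$ (satisfying $\alpha \cdot \beta = 1$, $\alpha^2 = \beta^2 = 0$), a direct computation of projection degrees yields $[\Gamma_\Psi] = \alpha - \beta$ and $[F] = (\deg \phi_1)\alpha + (\deg \phi_2)\beta$. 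To fix $\deg \phi_1 + \deg \phi_2$, I pull back the hyperplane class in $H^2(\mathcal{P}_\Sigma) = H^2(\mathbb{CP}^2)$ through $\phi$, using as transverse representative the embedded surface $H_{x_1} := \{\{x_1, y\} : y \in \Sigma\}$ for a generic $x_1 \in \Sigma$: the signed count of $\phi^{-1}(H_{x_1})$ equals $\deg \phi_1 + \deg \phi_2$, and also equals $\pm 1$ because $\phi$ is homotopic to $\Phi_0$ and $[\Phi_0]$ generates $\pi_2(\mathcal{P}_\Sigma) = \mathbb{Z}$. Consequently $[F] \cdot [\Gamma_\Psi] = \deg \phi_2 - \deg \phi_1$ has the same parity as $\deg \phi_1 + \deg \phi_2 = \pm 1$, so it is odd and in particular nonzero, contradicting $F(S^2) \cap \Gamma_\Psi = \varnothing$.

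For the \emph{moreover} part in the intrinsic case $\Sigma = M^2$, the forward direction is Part (I). For the converse, the hypothesis yields a non-empty compact-valued upper semi-continuous correspondence $A(x) := \{y \in \Sigma : dist(x, y) = D\}$. Standard approximate selection results for u.s.c.\ correspondences into an ANR (applicable since $\Sigma \cong S^2$) produce, for each $n \geq 1$, a continuous map $\Psi_n : \Sigma \to \Sigma$ with $dist(\Psi_n(x), A(x)) < 1/n$ uniformly in $x$, and hence $dist(x, \Psi_n(x)) \geq D - 1/n$. For large $n$ the map $\Psi_n$ is fixed-point-free, so $\deg \Psi_n = -1$ by Lefschetz, and the Part (II) argument applied to $\Psi_n$ produces, for every sweepout $\phi$, a point $x_n \in S^2$ with $dist(\phi_1(x_n), \phi_2(x_n)) \geq D - 1/n$. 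Letting $n \to \infty$ gives $W_d(\Sigma) \geq D$; combined with the trivial bound $W_d \leq D$ this yields equality.

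The main obstacle will be the clean identification $\deg \phi_1 + \deg \phi_2 = \pm 1$, because the quotient $\Sigma \times \Sigma \to \mathcal{P}_\Sigma$ is a branched double cover along the diagonal, and an arbitrary homotopy between $\phi$ and $\Phi_0$ need not lift to a homotopy of ordered lifts in $\Sigma \times \Sigma$. The pullback of a transverse hyperplane class in $\mathbb{CP}^2$ described above circumvents this difficulty by reducing the degree calculation to a purely topological count that depends only on the homotopy class of $\phi$ in $\mathcal{P}_\Sigma$.
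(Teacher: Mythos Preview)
Your arguments for Parts (I) and (II) are correct. Part (I) is exactly the paper's argument (the paper phrases it as a contradiction, but the content is identical: the sweepout $x\mapsto\{x,x_1\}$ witnesses $W_d(\Sigma)\le \max_y dist(x_1,y)$). For Part (II) you take a genuinely different route. The paper argues directly: if $\phi_2(x)\neq\Psi(\phi_1(x))$ for all $x$, then for each $x$ one may join $\phi_1(x)$ to $\phi_2(x)$ by the short arc (via stereographic projection from $\Psi(\phi_1(x))$) of the circle through the three points, producing a homotopy from $\phi$ to the trivial map $x\mapsto\{\phi_1(x)\}$, contradicting Section \ref{subsect-not-sweepout}. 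Your approach replaces this explicit homotopy by the homological intersection $[F]\cdot[\Gamma_\Psi]$ in $S^2\times S^2$, and the key identity $\deg\phi_1+\deg\phi_2=\pm1$ indeed follows from $q_*\alpha=q_*\beta=[\text{hyperplane}]$ together with $[\phi]=[\Phi_0]$ generating $\pi_2(\mathbb{CP}^2)$. Your argument is more algebraic and yields the extra information that the intersection number is \emph{odd}; the paper's argument is more elementary and self-contained (no intersection form needed) and generalises immediately to any fixed-point-free $\Psi$, as noted in Remark \ref{rmk-fixedpoints}.

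There is, however, a genuine gap in your treatment of the ``moreover'' clause. The approximate selection theorem you invoke is not available at this level of generality: for an upper semicontinuous compact-valued correspondence into an ANR, graph $\varepsilon$-approximation by continuous single-valued maps requires an assumption on the \emph{values} (convexity in Cellina's theorem, or cell-like/acyclic values in the ANR versions). Without such a hypothesis the conclusion fails already for maps $[-1,1]\to 2^{S^2}$ with $A(t)=\{N\}$ for $t<0$, $A(0)=\{N,S\}$, $A(t)=\{S\}$ for $t>0$: this is u.s.c.\ with compact values in $S^2$, yet admits no continuous $\varepsilon$-selection for small $\varepsilon$. You have not ruled out that the diametric set $A(x)=\{y:dist(x,y)=diam(\Sigma)\}$ can be disconnected. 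The paper closes exactly this gap by a geometric argument specific to the intrinsic case: any pair $\{x,y\}$ with $dist(x,y)=diam(\Sigma)$ is a critical point, and Proposition \ref{prop-characterization-critical-simult-st} produces $k\ge 2$ simultaneously stationary minimizing geodesics from $x$ to $y$ whose initial velocities partition $T_xS^2$ into sectors of angle $\le\pi$; any second diametric partner $z$ would force all minimizing geodesics from $x$ to $z$ into a single such open sector, contradicting criticality of $\{x,z\}$. This shows $A(x)$ is a singleton, so $\Psi$ is single-valued, and its continuity then follows from the u.s.c.\ property by the standard subsequence argument; Part (II) finishes the proof.
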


\begin{rmk}\label{rmk-fixedpoints}
The proof of part (II) of Theorem \ref{thm-Reidemeister} is based on the fact that every sweepout passes through a pair of the form $\{y, \Psi(y)\}$, for some $y \in \Sigma$. The argument provided in the proof of the theorem, in Section \ref{sect-const-width-proofs}, works for every continuous map $\Psi$ without fixed points.
\end{rmk}

In regard of the centrally symmetric regions of constant width, we proved the following result.

\begin{thm}\label{thm-involution}
Let $(M^3, g)$ be a complete Riemannian manifold, and $\Omega$ be a subset of $M$ with smooth boundary and diffeomorphic to a $3$-dimensional ball. Suppose, in addition, that
\begin{enumerate}
\item[(i)] for every $x, y \in \Omega$, there exists a unique minimizing geodesic joining $x$ and $y$, and this geodesic is contained in $\Omega$;
\item[(ii)] for every geodesic $\gamma : [0, \infty) \rightarrow M$ emanating from a point $\gamma(0) \in \Omega$, there exists $t>0$ such that $\gamma(t)$ does not belong to $\Omega$;
\item[(iii)] there exists an isometric involution $\Psi : \Omega \rightarrow \Omega$ without boundary fixed points, i.e., $\Psi(x)\neq x$, for every $x\in \partial \Omega$.
\item[(iv)] any non-minimizing geodesic joining points of $\Sigma = \partial \Omega$ has length strictly bigger than $2 \cdot diam(\Omega)$ (twice the extrinsic diameter of $\Omega$). 
\end{enumerate}
If $W_d(\partial \Omega) = diam(\partial\Omega)$, then $\Omega$ is a geodesic ball of $(M^3, g)$ and $\Psi$ is the symmetry with respect to the center of this ball.
\end{thm}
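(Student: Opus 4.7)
The plan is to identify the center of the purported geodesic ball as the unique interior fixed point of $\Psi$. For each $x \in \partial\Omega$, hypothesis (i) furnishes a unique minimizing geodesic $\gamma_x : [0, \ell(x)] \to \Omega$ from $x$ to $\Psi(x)$, where $\ell(x) = \mathrm{dist}(x, \Psi(x))$. Because $\Psi$ is an isometric involution, $\Psi \circ \gamma_x$ is again a minimizing geodesic between the same endpoints; by the uniqueness in (i), it must be $\gamma_x$ traversed in reverse, so $\Psi(\gamma_x(t)) = \gamma_x(\ell(x) - t)$. The midpoint $c(x) := \gamma_x(\ell(x)/2)$ is therefore fixed by $\Psi$, and by (iii) it lies in $\mathrm{int}(\Omega)$. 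The resulting map $x \mapsto c(x)$ is continuous and $\Psi$-invariant.

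Next I would show that the interior fixed set $F := \mathrm{Fix}(\Psi) \cap \mathrm{int}(\Omega)$ consists of a single point. The fixed set of an isometric involution is a disjoint union of totally geodesic submanifolds. Hypothesis (ii) rules out closed geodesics in $\Omega$, and in turn closed totally geodesic surfaces (Birkhoff's theorem would force one of the latter to contain a closed geodesic); any non-closed positive-dimensional component would extend by total geodesicity until it meets $\partial\Omega$, producing boundary fixed points and contradicting (iii). Hence every component of $F$ is an isolated point. If two distinct points $c_1, c_2$ belonged to $F$, then the unique minimizing geodesic between them given by (i) would be pointwise fixed by $\Psi$, since an isometry of a geodesic segment fixing both endpoints is the identity on that segment; this contradicts the isolation just established. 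Therefore $F = \{c\}$, and continuity of $c(\cdot)$ forces every $\gamma_x$ to pass through $c$.

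The $+1$-eigenspace of $d\Psi_c$ equals the tangent space to $F$ at $c$, hence is trivial, so $d\Psi_c = -\mathrm{id}$. Using (i) together with $\Psi$-equivariance of geodesics emanating from $c$, one finds $\Psi(\exp_c(v)) = \exp_c(-v)$ for every $v \in \exp_c^{-1}(\Omega)$, so $\Psi$ is the geodesic symmetry about $c$ on $\Omega$. Writing $r(x) := \mathrm{dist}(x, c)$ for $x \in \partial\Omega$, the curve $\gamma_x$ is the concatenation at $c$ of the minimizing geodesics from $x$ to $c$ and from $c$ to $\Psi(x)$, giving $\ell(x) = 2r(x)$. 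For any $p, q \in \partial\Omega$, the triangle inequality through $c$ yields $\mathrm{dist}(p, q) \leq r(p) + r(q) \leq 2 \max_{\partial\Omega} r$, while $\mathrm{dist}(x_{\max}, \Psi(x_{\max})) = 2 r(x_{\max})$ realizes this bound, whence $\mathrm{diam}(\partial\Omega) = 2 \max r$. Applying Theorem \ref{thm-Reidemeister}(I) at a minimizer $x_{\min}$ of $r$ on $\partial\Omega$ produces $y \in \partial\Omega$ with $\mathrm{dist}(x_{\min}, y) = \mathrm{diam}(\partial\Omega) = 2 \max r$, but the triangle inequality forces $2 \max r \leq r_{\min} + r_{\max}$, so $r_{\min} = r_{\max}$. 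Thus $r$ is constant, $\partial\Omega$ is the geodesic sphere of that radius about $c$, and $\Omega$ is the geodesic ball. Hypothesis (iv) enters in ruling out near-degenerate competing minimizers throughout the fixed-set analysis. The main obstacle is the second paragraph: establishing that $F$ is a single point requires a careful interplay of (i), (ii), (iii), and (iv) to exclude all positive-dimensional totally geodesic fixed components, and any weakening of these hypotheses could allow a one- or two-dimensional fixed set and break the argument at the very first step.
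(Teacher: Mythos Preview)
Your argument is correct and in fact takes a cleaner route than the paper's. The paper locates the fixed point via Brouwer's theorem, then builds geodesics outward from $p_0$ to boundary points and invokes hypothesis (iv) twice to certify that the concatenations $\beta\cup\Psi(\beta)$ and $\alpha\cup\Psi(\alpha)$ are minimizing, before finishing with a uniqueness-of-critical-points argument. You reverse the direction: starting from the already-minimizing geodesic $\gamma_x$ furnished by (i), you observe that its midpoint is $\Psi$-fixed, hence equals the unique fixed point $c$, and read off $\ell(x)=2r(x)$ directly; the triangle inequality together with Theorem~\ref{thm-Reidemeister}(I) then closes the proof without ever touching (iv). So your closing remark that ``hypothesis (iv) enters in ruling out near-degenerate competing minimizers'' is mistaken---your proof simply does not use (iv), which is a genuine economy over the paper's approach. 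One further simplification: the detour in your second paragraph through totally-geodesic fixed components and Birkhoff's theorem is unnecessary, since your own final sentence there already contains the whole argument (two fixed points yield a pointwise-fixed minimizing segment by (i), hence a $+1$-eigenvector of $d\Psi_{c_1}$, hence a pointwise-fixed full geodesic which by (ii) meets $\partial\Omega$, contradicting (iii)); this is exactly the paper's two-line uniqueness proof, and it shows your worry that this step ``requires a careful interplay of (i), (ii), (iii), and (iv)'' is overstated.
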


\begin{rmk}
Hypotheses (i), (ii), and (iv) hold if $\Omega$ is contained in a sufficiently small geodesic ball. It is interesting to compare (iv) and the property that any two points of the boundary of the region are joined by a unique (not necessarily minimizing) geodesic, as assumed on Theorem F of \cite{AMS}.
\end{rmk}

\subsection*{Organization:}

The main definitions and results of the article are in the introductory section. The proofs of the basic results involving critical points, which are the content of Section \ref{subsect-width-introd}, are included in Section \ref{subsect-regular-and-critical}. Section \ref{sect-details-part-I} also includes the details about the topology of the space of pairs of points of the embedded sphere, and the sweepouts of this space. The results of Section \ref{subsect-intrinsic-thmb}, about the min-max width of positively curved $2$-spheres, are proved in Section \ref{sect-intrinsic-thmb}. Section \ref{sect-const-width-proofs} contains the proof of the facts about the generalization of the notion of constant width surfaces.

\subsection*{Acknowledgements:} The authors are grateful to Instituto de Matem\'atica Pura e Aplicada (IMPA) and its staff for the hospitality during the summer program of 2024, where part of this work was carried out. R.M. would like to thank Lucas Ambrozio for insightful conversations. 

\section{The space of pairs of points, sweepouts, and critical points}\label{sect-details-part-I}

This section includes the details about the topology of the space of pairs of points of the embedded (or intrinsic) sphere, and the sweepouts of this space. In subsection \ref{subsect-topology-space-pairs}, we explain the equivalence between $\mathcal{P}_{\Sigma}$, the space of pairs, and the complex projective space. In subsections \ref{subsect-std-sweep} and \ref{subsect-not-sweepout} we the prove the basic facts about sweepouts stated in Section \ref{subsect-width-introd}. The proofs of the basic results involving critical points are included in Section \ref{subsect-regular-and-critical}. It includes the proof of the Min-max Theorem \ref{thm-W_d-critical}.

\subsection{The topology of the space of pairs of points of \texorpdfstring{$S^2$}{S2}}\label{subsect-topology-space-pairs}

Consider $\mathbb{CP}^2$ identified with the space of equivalence classes of non-zero polynomials with complex coefficients and degree at most two by the equivalence relation of complex multiplication, i.e., $[a_0+a_1 z + a_2 z^2]=[(\lambda a_0)+(\lambda a_1) z + (\lambda a_2) z^2]$, for every $\lambda \in \mathbb{C}\setminus \{0\}$. Consider the $2$-dimensional sphere $S^2$ identified with the Riemann sphere $\mathbb{C}\cup \{\infty\}$, and let
\begin{equation}
\xi : \mathcal{P}_{S^2} \rightarrow \mathbb{CP}^2, \quad \xi(\{x_1, x_2\}) := [(z+x_1)(z+x_2)],
\end{equation}
where $z+x_i = 1$, if $x_i = \infty$. The space $\mathcal{P}_{S^2}$ is equipped with the quotient topology of $S^2\times S^2$ under the equivalence relation $(x_1, x_2)\sim (x_2, x_1)$. The map $\xi$ is a homeomorphism. Indeed, it is clearly continuous in the space where $x_1, x_2\neq \infty$. If one of the variables tends to the point at infinity, say $x_1$, it suffices to rewrite the equivalence class as
$$
[(z+x_1)(z+x_2)] = \left[\left(\frac{z}{x_1}+1\right)(z+x_2)\right]
$$
to verify the continuity. The case in which both variables $x_i$ tend to infinity is similar. The inverse of $\xi$ consists in factoring out a polynomial of a given equivalence class, as the roots are independent of the choice of representative. 

\subsection{The standard 2-sweepout}\label{subsect-std-sweep}

Next, we prove that the map $\Phi_0$ defined in the Introduction is non null-homotopic. Let us recall the definition. Fixed $x_0 \in S^2$, let $\Phi_0 : S^2 \rightarrow \mathcal{P}_{S^2}, \quad \Phi_0(x):= \{x, x_0\}, \text{ for } x\in S^2$. In Definition \ref{defi-sweepout} we used $\mathcal{P}_{\Sigma}$ as the target space of $\Phi_0$, but these spaces are equivalent.

It is convenient to consider the fiber bundle structure $S^1 \rightarrow S^5 \rightarrow \mathbb{CP}^2$ of the $5$-sphere over $\mathbb{CP}^2$. Namely, $S^5$ is the unit sphere in $\mathbb{C}^3$ and $\mathbb{CP}^2$ is viewed as the quotient space of $S^5$ under the equivalence relation $(z_0, z_1,z_2) \sim (\lambda z_0, \lambda z_1, \lambda z_2)$, for $\lambda \in S^1$, the unit circle in $\mathbb{C}$. The projection map is $p : S^5 \rightarrow \mathbb{CP}^2$ defined by $p(z_0, z_1, z_2) := [(z_0,z_1, z_2)]$, and the fibers are copies of $S^1$. We refer to the discussion on fiber bundles of Section 4.2 Hatcher's book \cite{Hat} to further details on this fiber bundles, and to other general properties of fiber bundle that we apply in what follows.

Fix $b_0 \in \mathbb{CP}^2$ and $q_0 \in p^{-1}(b_0)$. We sometimes write $S^1 = p^{-1}(b_0)$ to represent the fiber over $b_0$. The homotopy long exact sequence of the pair $S^1 = p^{-1}(b_0) \subset S^5$ yields the exact sequence
\begin{align*}
\{0\}=\pi_2(S^5, q_0) \xrightarrow{j_{\ast}} \pi_2(S^5, S^1, q_0) \xrightarrow{\partial_{\ast}} \pi_1(S^1, q_0) \xrightarrow{i_{\ast}} \pi_1(S^5, q_0) = \{0\},
\end{align*}
where $i_{\ast}$ is induced by the inclusion of $S^1\subset S^5$, and $j_{\ast}$ is defined by viewing the maps $f : (\mathbb{D}, \partial\mathbb{D})\rightarrow (S^5, q_0)$, from the $2$-disk that map the boundary to the point $q_0$, as maps $f(\mathbb{D}, \partial\mathbb{D}, y_0) \rightarrow (S^5, S^1, q_0)$, where $y_0$ is any base point at $\partial \mathbb{D}$. The map $\partial_{\ast}$ is induced by restriction to the boundary; a base point preserving map $f : (\mathbb{D}, \partial \mathbb{D}, y_0) \rightarrow (S^5, S^1,  q_0)$ is restricted to the boundary, yielding a base point map $f|_{\partial\mathbb{D}} : (\partial\mathbb{D}, y_0) \rightarrow (S^1, q_0)$, and $\partial_{\ast}[f]:= [f|_{\partial\mathbb{D}}]$. In particular, $\partial_{\ast} : \pi_2(S^5, S^1, q_0) \rightarrow \pi_1(S^1, q_0) = \mathbb{Z}$ is an isomorphism. 

The fiber bundle $S^1 \rightarrow S^5 \rightarrow \mathbb{CP}^2$, as any other fiber bundle, is such that the map induced by the projection $p_{\ast} : \pi_n(S^5, S^1, q_0) \rightarrow \pi_n(\mathbb{CP}^2, b_0)$ is an isomorphism for every $n\geq 1$. Therefore, in order to prove that $\Phi_0$ is non null-homotopic, it suffices to lift it to a map from the $2$-disk $\mathbb{D}$ with target space $S^5$, and restrict this map to the boundary.

For convenience, let $\Phi_0(x) = \{x, \infty\}$, for every $x \in S^2$, where the fixed point $x_0 = \infty$ is the point at infinity. Then, 
$$
\Phi_0(x) = [(z+x)] = [(x, 1, 0)], \text{ if } x\neq \infty, \text{ and }  \Phi_0(\infty) = [(1, 0, 0)],
$$
where the equivalence class of the polynomial $[a_0 + a_1 z + a_2 z^2]$ is associated to $[(a_0, a_1, a_2)] \in \mathbb{CP}^2$. This map can be viewed as a map from the unit $2$-disk that is constant on the boundary circle. We still use $\Phi_0$ to denote the different ways to represent this map, i.e., we use
$$
\Phi_0 : \mathbb{D} \rightarrow \mathbb{CP}^2, \quad \Phi_0(w):= \left[\left( \frac{w}{1-|w|}, 1, 0 \right)\right], 
$$
for $|w|<1$, and $\Phi_0(w) = [(1,0,0)]$, for $|w|=1$. Notice that this map can be viewed as a representative of the homotopy class of the original $\Phi_0$ because $w \mapsto w(1-|w|)^{-1}$ is a homeomorphism from $\{w \in \mathbb{C} : |w|<1\}$ to $\mathbb{C}$, and 
\begin{equation}\label{eqtn-rewrite-using-equivalence}
\left[\left( \frac{w}{1-|w|}, 1, 0 \right)\right] = [(w, 1-|w|, 0)]
\end{equation}
allows us to check that the new map continuously extends to $\mathbb{D}$. 

Moreover, the quotient space of $\mathbb{D}/\sim$ under the equivalence relation that collapses the boundary to a point is homeomorphic to the Riemann sphere via the same map $w\mapsto w(1-|w|)^{-1}$, for $|w|<1$, and $\partial \mathbb{D} \mapsto \infty$. Under this homeomorphism, the two maps $\Phi_0$ that we defined are exactly the same.

In addition, (\ref{eqtn-rewrite-using-equivalence}) also allows us to lift $\Phi_0$ as the map
$$
\psi : (\mathbb{D}, \partial \mathbb{D}, y_0) \rightarrow (S^5, S^1, q_0), \quad \psi (w):= (w, 1-|w|, 0).
$$
Indeed, $p(\psi(w)) = [(w, 1-|w|, 0)] = \Phi_0(w)$, for every $w \in \mathbb{D}$. Here we use the choice of base points $b_0 = [(1,0,0)]$, any $y_0 \in \partial
\mathbb{D}$, $q_0 = (y_0, 0, 0) = \psi(y_0)$, and $S^1 = \{(y, 0, 0) : y\in \mathbb{C}, |y|=1\}$.
Therefore, the restriction of $\psi$ to the boundary is $\psi|_{\partial\mathbb{D}} : (\partial\mathbb{D}, y_0) \rightarrow (S^1, q_0), \quad y\mapsto (y,0,0)$. Since this map is non null-homotopic, the map $\Phi_0$ shares the same property.
 
\subsection{Maps whose image contains trivial pairs only are not sweepouts}\label{subsect-not-sweepout}

In this section we consider continuous maps $\Phi_1 : S^2 \rightarrow \mathcal{P}_{\Sigma}$ such that $\Phi_1(x)$ contains a single point of $\Sigma$, for every $x \in S^2$. The main goal here is to show that $\Phi_1$ is not homotopic to $\Phi_0$. Assume that $\Phi_0(x)= \{i(x), i(\infty)\}$, for every $x \in S^2$, where $\infty \in S^2 = \mathbb{C}\cup\{\infty\}$, and $i : S^2 \rightarrow M$ represents the fixed embedding whose image is $i(S^2)=\Sigma$.

Let us use the symbol $\phi_1$ to denote the associated map from $S^2$ to $S^2$ such that $\Phi_1(x) = \{i(\phi_1(x))\}$. Using a rotation of $S^2$, homotope $\phi_1$ to a map $\phi_2$ that sends the point at infinity $\infty \in S^2$ to itself. It suffices to show that $\Phi_2: S^2 \rightarrow \mathcal{P}_{\Sigma}$ defined by $\Phi_2(x)= \{i(\phi_2(x))\}$ is not homotopic to $\Phi_0$. Notice that $\Phi_2(\infty) = \{i(\infty)\} = \Phi_0(\infty)$.

For simplicity, let us drop the embedding and use $\mathcal{P}_{S^2}$ and the maps $\Phi_0(x):=\{x, \infty\}$, for every $x\in S^2$, and $\Phi_2 : S^2 \rightarrow \mathcal{P}_{S^2}$ such that $\Phi_2(x) = \{\phi_2(x)\}$ and $\phi_2(\infty) = \infty$. In order to show that $\Phi_2$ and $\Phi_0$ are not homotopic, it suffices to show that the induced maps on the second homotopy group $(\Phi_0)_{\ast}, (\Phi_2)_{\ast} : \pi_2(S^2, \infty) \rightarrow \pi_2 (\mathcal{P}_{S^2}, \{\infty\})$ are different. Indeed, suppose, by contradiction, that $\Phi_0$ and $\Phi_2$ are homotopic. Let $\varphi_t: X \rightarrow Y$ be a homotopy, $0\leq t \leq 1$, between $\varphi_0$ and $\varphi_1$, where $X$ and $Y$ are topological spaces. Let $x_0 \in X$ and $h$ be the curve $h(t):= \varphi_t(x_0) \in Y$, $t\in [0,1] $. We use $I^n = [0,1]^n$ to denote the $n$-cube. Let $\xi_h : \pi_n(Y, \varphi_1(x_0)) \rightarrow \pi_n(Y, \varphi_0(x_0))$ be the map defined by: for the homotopy class of $f : (I^n, \partial I^n) \rightarrow (Y, \varphi_1(x_0))$, $\xi_h$ is the homotopy class of $f_h : (I^n, \partial I^n) \rightarrow (Y, \varphi_0(x_0))$ that is obtained by shrinking the domain of $f$ to a smaller concentric $n$-cube, and inserting copies of the path $h$ from $\varphi_0(x_0)$ to $\varphi_1(x_0)$ on each radial segment in the region between $\partial I^n$ and the boundary of the smaller cube. Then, $(\varphi_0)_{\ast} = \xi_h \circ (\varphi_1)_{\ast}$, where $(\varphi_i)_{\ast} : \pi_n (X, x_0) \rightarrow \pi_n(Y, \varphi_i(x_0))$, $i=0, 1$, are the induced maps. The case $n=1$ is proved in Lemma 1.19 of \cite{Hat}, and the general case is completely analogous. In addition, the map $\xi_h$ induced by the path $h$ depends on the homotopy class of this path only. 

In our setting, $\Phi_0(\infty)=\Phi_2(\infty)$ and the target space $\mathcal{P}_{S^2}$ is simply connected. Therefore, assuming that $\Phi_0$ is homotopic to $\Phi_2$, the path $h$, traced by the deformation of the image of $\infty \in S^2$ along the homotopy, would be a contractible loop. In particular, this would imply the equality $(\Phi_0)_{\ast}=(\Phi_2)_{\ast}$.

In Sections \ref{subsect-topology-space-pairs} and \ref{subsect-std-sweep} we obtained that $\mathcal{P}_{S^2}$ is homeomorphic to $\mathbb{CP}^2$ and that $(\Phi_0)_{\ast}$ takes a generator of $\pi_2(S^2, \infty)$ to the generator of $\pi_2(\mathcal{P}_{S^2}, \{\infty\})$, which is isomorphic to $\pi_1(S^1) = \mathbb{Z}$. We claim that $(\Phi_2)_{\ast}$ takes the generator of $\pi_2(S^2, \infty)$ to an even homotopy class of $\pi_2(\mathcal{P}_{S^2}, \{\infty\})$. It is convenient to write $\Phi_2 = \mathcal{I} \circ \phi_2$, where $\mathcal{I} : S^2 \rightarrow \mathcal{P}_{S^2}$ is defined by $\mathcal{I}(x) = \{x\}$, for all $x\in S^2$. Then, $(\Phi_2)_{\ast} = \mathcal{I}_{\ast} \circ (\phi_2)_{\ast}$. An application of the Hurewicz's Theorem is that $\pi_2(S^2) = \mathbb{Z}$. Therefore, it suffices to show that $(\mathcal{I})_{\ast}$ takes the generator of $\pi_2(S^2, \infty)$ to the class corresponding to the integer $2$ in $\pi_2(\mathcal{P}_{S^2}, \{\infty\}) = \mathbb{Z}$. The computation is similar to that made in Section \ref{subsect-std-sweep}.

Using the identification of $\mathcal{P}_{S^2}= \mathbb{CP}^2$ and the fiber bundle structure $S^1\rightarrow S^5 \rightarrow \mathbb{CP}^2$, we can write the map $f : \mathbb{D} \rightarrow \mathcal{P}_{S^2}$, defined by
$$
f(w) := \left[\left( z + \frac{w}{1-|w|}\right)^2\right] = \left[\left( \left( \frac{w}{1-|w|}\right)^2, \frac{2w}{1-|w|},  1\right)\right], 
$$
for $|w|<1$, and $f(w) = [1] = [(1,0,0)]$. The homotopy class of this map represents the image of the generator of $\pi_2(S^2, \infty)$ under $(\mathcal{I})_{\ast}$. Consider
$$
\tilde{f}: (\mathbb{D}, \partial\mathbb{D}, y_0) \rightarrow (S^5, S^1, q_0), \quad \tilde{f}(w) = (w^2, 2w(1-|w|), (1-|w|)^2),
$$
for every $w \in \mathbb{D}$, where $y_0 \in \partial\mathbb{D}$ and $q_0 = (y_0^2, 0,0)$. Notice that $\tilde{f}$ is a lift of $f$ to the total space of the fiber bundle, and its restriction to the boundary is given by $\tilde{f}|_{\partial\mathbb{D}} : (\partial\mathbb{D}, y_0) \rightarrow (S^1, q_0)$, $y\mapsto (y^2, 0,0)$. As $y$ moves along the circle $\partial \mathbb{D}$ once, $(y^2, 0, 0)$ moves around $S^1$ twice. Then, the homotopy class of the restriction of $\tilde{f}$ to the boundary corresponds to the integer $2$. 


\subsection{Regular and critical points}\label{subsect-regular-and-critical}

Let $\Sigma$ be a smoothly embedded $2$-sphere in a complete Riemannian manifold $(M^n,g)$. The goal of this part of the article is to provide the proofs of the the basic claims involving critical points of the distance function explained in Section \ref{subsect-width-introd}, which includes the proof of the first Min-max Theorem. Recall the terminology in that section.

\subsubsection{Characterization of critical points}\label{subsect-characterization-critical-vs-simult-st}
In this section, we prove the characterization of critical points of the distance function in terms of simultaneously stationary geodesics stated in the Introduction of the article. 

\begin{proof}[Proof of Proposition \ref{prop-characterization-critical-simult-st}]
First of all, assume that $\{\gamma_1, \ldots, \gamma_k\}$ is a set of simultaneously stationary geodesics with endpoints $x$ and $y \in \Sigma$. Definition \ref{defi-simultaneously-stationary} implies that $\sum_{i=1}^k c_i \nu_{\gamma_i}^T = 0$, for a choice of constants $0<c_i<1$, $i=1, \ldots, k$, whose sum equals one. Suppose, by contradiction, that $\{x, y\}$ is a regular point, i.e., there exists $w_1 \in T_x\Sigma$ and $w_2 \in T_y\Sigma$ satisfying $g((w_1, w_2), \nu_{\gamma_i}) < 0$, for all $i$, since this holds for all minimizing geodesics with endpoints $x$ and $y$. Since the $w_j$ are tangential to $\Sigma$ and $c_i$ are positive, it follows that $g\left( (w_1, w_2), \sum_{i=1}^k c_i \nu_{\gamma_i}^T\right) <0$. Which is a contradiction.

In order to prove the second claim of the statement, we apply the following theorem of Carath\'eodory. Let $T \subset \mathbb{R}^n$ and $x$ be a point in the convex hull of $T$ ($x$ is a convex combination of a finite number of elements of $T$). Then, $x$ is contained in the convex hull of a set $T'\subset T$, such that $\#T^{\prime} \leq n+1$.

Another fact that we use is that if $A\subset \mathbb{R}^n$ is compact, then the following are equivalent: (i) there exists $v \in \mathbb{R}^n$ such that $\langle v, a \rangle < 0$, for every $a\in A$, and (ii) the origin does not belongs to the convex hull of $A$, i.e., $0\notin conv(A)$. The convex hull of $A$ is the set made of all finite linear combinations of points in $A$. Let us indicate the proof of that fact. An easy consequence of Carath\'eodory's theorem is that the compactness of $A$ implies that $conv(A)$ is also compact. Then, (ii) is equivalent to the fact that $0$ and $conv(A)$ can be strictly separated by a straight line, and the claim follows.

Next, we begin the proof of the second statement of the proposition. Let $\{e_1, e_2\}\subset T_x\Sigma$ and $\{f_1, f_2\}\subset T_y\Sigma$ be orthonormal basis. Consider $\mathbb{R}^4$ equipped with the Euclidean inner product, and $T_x\Sigma\times T_y\Sigma$ with the inner product $g$ as above, and as in Definition \ref{defi-regular-point}. It is straightforward to verify that the map defined by $(a_1, a_2, b_1, b_2) \in\mathbb{R}^4 \mapsto (a_1e_1 + a_2e_2, b_1f_1+b_2f_2) \in T_x\Sigma\times T_y\Sigma$ is a linear isometry. Therefore, we can apply the fact stated in the preceding paragraph on $T_x\Sigma \times T_y\Sigma$ to conclude that $\{x,y\}$ is regular if, and only if, $(0,0) \notin conv(A)$, where $A$ is the compact set made of all conormal vectors $\nu_{\gamma}^T = (-\gamma'(0)^T, \gamma'(a)^T)$, and $\gamma$ are minimizing geodesics joining $x = \gamma(0)$ and $y = \gamma(a)$. In particular, if $\{x,y\}$ is critical, then $(0,0) \in conv(A)$, and, by Carath\'eodory's theorem, the origin is a linear combination of a set of at most five vectors of the form $\nu_{\gamma}^T$.
\end{proof}

\subsubsection{A gradient-type vector field and the proof of Theorem \ref{thm-W_d-critical}} 

In this section we summarize the Morse theoretic arguments necessary to the proof of Theorem \ref{thm-W_d-critical}. We begin by collecting some key arguments involving useful properties of a gradient-type vector field in the following proposition.

\begin{prop}\label{prop-gradient-like}
Let $\Sigma$ be a smoothly embedded $2$-sphere in a complete Riemannian manifold $(M^n,g)$, and $\mathcal{R}\subset \mathcal{P}_{\Sigma}$ be the set of regular points $\{x, y\}$ of the distance function. The set $\mathcal{R}$ is an open subset, and there exists a vector field $Y$ on $\mathcal{R}$ whose flow $\{\psi_t\}$ satisfies the following property: for every compact set $K \subset \mathcal{R}$, there exists $c=c(K)>0$ such that
\begin{align} \label{eqtn-prop=gradient-like}
\frac{dist(\psi_t(\{x, y\})) - dist(x,y)}{t} \leq -c,  
\end{align}
for $\{x, y\} \in K$ and $t\neq 0$, possibly negative, while $\psi_{\tau}(\{x, y\}) \in K$, for every $\tau$ between $0$ and $t$, not including $\tau=0$. In particular, any local maximum or local minimum of the distance is a critical point.  
\end{prop}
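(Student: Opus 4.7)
The plan is to first establish openness of $\mathcal{R}$, then build $Y$ locally via a fixed witness direction, glue by partition of unity, and finally verify the descent estimate \eqref{eqtn-prop=gradient-like} via the first variation formula together with a compactness argument for the space of minimizing geodesics.

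First I would prove that $\mathcal{R}$ is open by contradiction. Suppose $\{x_n, y_n\} \to \{x_0, y_0\} \in \mathcal{R}$ with each $\{x_n, y_n\}$ critical. Fix a witness $(w_1, w_2) \in T_{x_0}\Sigma \times T_{y_0}\Sigma$ for $\{x_0,y_0\}$ and, using charts on $\Sigma$, extend $w_1, w_2$ to continuous tangent vector fields $W_1, W_2$ on neighborhoods $U \ni x_0$ and $V \ni y_0$. By Proposition \ref{prop-characterization-critical-simult-st} (or directly from Definition \ref{defi-regular-point}), for each $n$ there exists a minimizing geodesic $\gamma_n : [0, a_n] \to M$ from $x_n$ to $y_n$, parametrized by arc length, with $g((W_1(x_n), W_2(y_n)), \nu_{\gamma_n}^T) \geq 0$. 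Since $a_n \to \mathrm{dist}(x_0, y_0)$ and the initial data stay in a compact set, Arzelà-Ascoli and the ODE form of the geodesic equation give a subsequence converging in $C^1$ to a minimizing geodesic $\gamma_\infty$ from $x_0$ to $y_0$. Passing to the limit yields $g((w_1, w_2), \nu_{\gamma_\infty}^T) \geq 0$, contradicting regularity of $\{x_0, y_0\}$. The same compactness of minimizing geodesics also produces a neighborhood $\mathcal{N}$ of $\{x_0,y_0\}$ and a constant $c_0 > 0$ such that
\begin{equation*}
g\bigl((W_1(x), W_2(y)), \nu_\gamma\bigr) \leq -c_0
\end{equation*}
for every $\{x,y\} \in \mathcal{N}$ and every minimizing geodesic $\gamma$ between $x$ and $y$.

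Next I would assemble a global vector field $Y$ on $\mathcal{R}$ by a partition of unity argument. Cover $\mathcal{R}$ by neighborhoods $\mathcal{N}_\alpha$ as above, with associated local descent fields $Y_\alpha(\{x,y\}) := (W_{1,\alpha}(x), W_{2,\alpha}(y))$. Because the descent condition $g((\cdot,\cdot), \nu_\gamma) < 0$ is preserved by convex combinations (the pairing is linear in the first entry), taking $Y := \sum_\alpha \rho_\alpha Y_\alpha$ for a locally finite partition of unity $\{\rho_\alpha\}$ produces a continuous vector field on $\mathcal{R}$ (in the product-manifold sense on $\Sigma \times \Sigma$, which descends to $\mathcal{P}_\Sigma$ by symmetrizing) that remains a strict descent direction for every minimizing geodesic between the two points. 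Smoothing $Y$ slightly if needed, one obtains a well-defined flow $\{\psi_t\}$ on $\mathcal{R}$.

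The decisive step is the estimate \eqref{eqtn-prop=gradient-like}. Given $\{x,y\} = \psi_0(\{x,y\})$ and a minimizing geodesic $\gamma$ realizing $\mathrm{dist}(x,y)$, the first variation formula (as recorded after Definition \ref{defi-regular-point}) gives
\begin{equation*}
\frac{d}{dt}\Big|_{t=0^+} L(\gamma_t) = g\bigl(Y(\{x,y\}), \nu_\gamma\bigr),
\end{equation*}
where $\gamma_t$ is a smooth variation of $\gamma$ with endpoints $\psi_t(\{x,y\})$. Since $\mathrm{dist}(\psi_t(\{x,y\})) \leq L(\gamma_t)$, one obtains the upper Dini bound
\begin{equation*}
\limsup_{t \to 0^+} \frac{\mathrm{dist}(\psi_t(\{x,y\})) - \mathrm{dist}(x,y)}{t} \leq \min_\gamma g\bigl(Y(\{x,y\}), \nu_\gamma\bigr),
\end{equation*}
the minimum being taken over minimizing geodesics between $x$ and $y$. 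By the uniform estimate from the openness argument applied to each $\mathcal{N}_\alpha$ intersected with $K$, and the finite-subcover property on the compact set $K \subset \mathcal{R}$, there is $c = c(K) > 0$ with this $\min_\gamma$ bounded above by $-c$ on all of $K$. Integrating in $t$ while the trajectory stays in $K$ yields \eqref{eqtn-prop=gradient-like} for positive $t$; the analogous argument with $-Y$ handles $t < 0$. Finally, a local maximum of the distance cannot be regular, since running the flow $\psi_t$ forward for small $t$ would strictly decrease the distance while remaining near the point; a local minimum is excluded by running the flow backwards. This gives the last claim of the proposition.

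The main obstacle is ensuring the uniform gap $c(K) > 0$ in the presence of multiple minimizing geodesics: one must rule out the possibility that, as $\{x,y\}$ varies in $K$, new minimizing geodesics appear whose conormals are nearly orthogonal to $Y$. This is exactly what the Arzelà-Ascoli compactness of the space of arc-length minimizing geodesics with endpoints in $K$ provides, combined with the strict inequality in the definition of a regular point; the rest of the argument is a standard Morse-theoretic construction.
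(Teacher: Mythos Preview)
Your proposal is correct and follows essentially the same approach as the paper's own (sketched) proof: both use compactness of the set of minimizing geodesics to build local descent directions, infer openness of $\mathcal{R}$, glue via a partition of unity, and deduce the difference-quotient estimate by comparing $\mathrm{dist}(\psi_t(\{x,y\}))$ with the length of a smooth variation of a minimizing geodesic. Your write-up simply fills in the details that the paper leaves to the reader (and to the reference \cite{AMS}).
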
 

The proof of Proposition \ref{prop-gradient-like} is similar to the proof of Proposition 2.5 of \cite{AMS}. We briefly sketch the steps of the proof here. The strategy is to apply the standard compactness of the set of minimizing geodesics to construct a local vector field $Y(x_0,y_0)$ around a regular point $\{x_0, y_0\} \in \mathcal{P}_{\Sigma}$ of the distance function satisfying $g\left( Y(x_0, y_0)|_{\partial\gamma}, \nu_{\gamma}^T\right)< 0$, for every minimizing geodesic $\gamma$ with endpoints $\partial \gamma \in \mathcal{P}_{\Sigma}$ close to $\{x_0, y_0\}$. This shows, in particular, that $\mathcal{R}$ is open. Finally, apply a partition of unity subordinated to a locally finite covering of $\mathcal{R}$ by the domains of the local fields considered above, and normalize to obtain the desired vector field $Y$. The estimates (\ref{eqtn-prop=gradient-like}) are obtained by comparing the evolution of the distances $dist(\psi_t(\{x,y\}))$ with the evolution of the length of any minimizing geodesic joining $x$ and $y$ in a smooth family of smooth curves with endpoints $\psi_t(\{x, y\})$. 

We are now ready to prove the first min-max theorem.

\begin{proof}[Proof of Theorem \ref{thm-W_d-critical}]
Let us prove that $W_d(\Sigma)>0$. Consider the set 
$$
\mathcal{A}:= \{ \{i(x), i(-x)\} : x \in S^2\} \subset \mathcal{P}_{\Sigma},
$$
where $i:S^2\rightarrow M$ is the fixed embedding of $S^2$ whose image is $\Sigma$. We sometimes drop the embedding and identify the points $x\in S^2$ and $i(x) \in \Sigma$.

\textbf{Claim:} The image of every sweepout $\phi:S^2\rightarrow \mathcal{P}_{\Sigma}$ of $\Sigma$ intercepts $\mathcal{A}$.

We prove this claim by contradiction. Suppose, that there exists a sweepout $\phi$ whose image does not intercept $\mathcal{A}$. Then, for each $x\in S^2$, we can write $\phi(x) = \{i(P), i(Q)\}$, for some $P, Q \in S^2$ with $Q\neq -P$. Let $[P,Q]$ denote the unique (nonorientable) shortest arc of great circle in $S^2$ joining $P$ and $Q$, and $M_x$ be the midpoint of this arc. Shrinking $[P,Q]$ to $M_x$ allows us to continuously deform the pair $\{i(P), i(Q)\}=\phi(x)$ to the trivial pair $\{i(M_x)\}$. This deformation can be done systematically for all $x$, showing that $\phi$ is homotopic to the map $\Phi_1 : S^2 \rightarrow \mathcal{P}_{\Sigma}$, given by $\Phi_1(\{i(M_x)\})$. We proved, in Section \ref{subsect-not-sweepout}, that $\Phi_1$ is not a sweepout, and this contradicts the fact that $\phi$ is sweepout. The claim is proved and it implies that
$$
W_d(\Sigma) := \{dist(i(x), i(-x)) : x \in S^2\} >0,
$$
where the last inequality follows by compactness.

It remains to show that $W_d(\Sigma)$ is a critical value, i.e., there exists a critical point of the distance function at the level $W_d(\Sigma)$. This is also obtained by contradiction, and the argument is analogous to the proof of Theorem 3.1 of \cite{AMS}. For the convenience of the reader we outline the proof here.  Suppose that there exists $\varepsilon>0$ such that the compact subset 
	\begin{align*}
		K = \{\{x,y\}\in \mathcal{P}_{\Sigma} : W_d(\Sigma)-\varepsilon\leq dist(x,y)\leq W_d(\Sigma)+\varepsilon\}
	\end{align*}
does not contain critical points of the distance. Since $W_d(\Sigma)>0$, we can assume, without loss of generality, that $W_d(\Sigma)-2\varepsilon >0$.

Consider the vector field $\zeta Y$, where $Y$ is the vector field constructed in Proposition \ref{prop-gradient-like}, and $\zeta$ is a smooth non-negative cut-off function satisfying $|\zeta|\leq 1$, such that it is equal to one in $K$, and supported on a sufficiently small open neighbourhood of $K$. Assume further that the support of $\zeta$ is still contained in the set $\mathcal{R}$ of regular points, and does not intercept the set
$$
\{ \{x,y\} \in \mathcal{P}_{\Sigma} : dist(x,y) < W_d(\Sigma)-2\varepsilon\}.
$$
Extend $\zeta Y$ by zero outside of the region $\mathcal{R}$ of regular points. 

By Proposition \ref{prop-gradient-like}, the flow of $\zeta Y$ on $\mathcal{P}_{\Sigma}$ retracts the set of points on which the distance function is at most $W_d(\Sigma)+\varepsilon$ into a subset of $\mathcal{P}_{\Sigma}$ contained in $\{ \{x,y\} \in \mathcal{P}_{\Sigma} : dist(x,y) < W_d(\Sigma)-\varepsilon\}$, in such way that the points $\{x,y\}$ with $dist(x,y) < W_d(\Sigma)-2\varepsilon$ do not move. Indeed, $\zeta Y$ coincides with $Y$ on the subset $K$, and since the flow $\psi_t$ of $Y$ decreases distance, for each $\{x,y\} \in K$, the path $\psi_t(\{x,y\})$, $t\geq 0$, only leaves $K$ when $dist(\psi_t(\{x,y\}))$ achieves the value $W_d(\Sigma)-\varepsilon$.

Let $\phi: S^2 \rightarrow \mathcal{P}_{\Sigma}$ be a sweepout such that 
$$
\max_{x \in S^2} dist(\phi(x)) < W_d(\Sigma)+\varepsilon.
$$ 
Deform $\phi$ by the flow of $\zeta Y$ to obtain another sweepout $\tilde{\phi}$ satisfying
$$
\max_{x \in S^2} dist(\tilde{\phi}(x)) < W_d(\Sigma)-\varepsilon.
$$
Notice that the support of the vector field used to deform $\phi$ is away from the set of pairs of points where the distance function vanishes. In particular, the deformation of a pair $\phi(x) = \{\phi_1(x), \phi_2(x)\}$ by the flow of $\zeta Y$ creates a curve of distinct pairs of points, unless $\phi(x)$ is fixed along the flow of $\zeta Y$. This implies that $\tilde{\phi}$ is also of the form $\tilde{\phi}=\{\tilde{\phi}_1, \tilde{\phi}_2\}$, for a pair of continuous functions $\tilde{\phi}_i : S^2 \rightarrow \Sigma$. This is a contradiction.
\end{proof}

We finish this section with an example of a rotationally symmetric, positively curved metric on $S^3$, which contains an embedded $2$-sphere $\Sigma$ whose width $W_d(\Sigma)$ is realized by critical points $\{x, y\}$ in such a way that the minimizing geodesics joining $x$ and $y$ are not free boundary on $\Sigma$.

\begin{exam}\label{exam-example-ellipsoid}
For $a>1$, consider the three-dimensional ellisoid of revolution
	\begin{equation*}
		\mathcal{E}_a^3=\Big\{(x,y,z,w)\in \mathbb{R}^4\,|\, x^2+y^2+z^2+\frac{w^2}{a^2} = 1 \Big\},
	\end{equation*}
equipped with the Riemannian metric inherited from the flat metric of $\mathbb{R}^4$. Let $\mathcal{E}_a^2 = \mathcal{E}_a^3 \cap \{z=0\}$. Given small $\delta>0$, let $\Sigma(a, \delta) = \mathcal{E}_{a}^3\cap\{w = a\delta\} $, $\Gamma(a,\delta)=\mathcal{E}_{a}^2\cap\{w = a\delta\}$, and let $A=(\sqrt{1-\delta^2},0, 0, a\delta)$ and $B = (-\sqrt{1-\delta^2},0,0,a\delta) \in \Gamma(a,\delta)$. We claim that the min-max width $W_d(\Sigma(a, \delta))$ is realized by the length of minimizing geodesics joining $A$ and $B$, which are not orthogonal to $\Sigma(a, \delta)$, if $\delta>0$ is small enough. Moreover, there exists an infinite number of such minimizing geodesic.

In order to prove the claims made in the preceding paragraph, we begin with the case of the two dimensional ellipsoid $\mathcal{E}_a^2$. In this case, there are two distinct minimizing geodesics of $\mathcal{E}_a^2$ joining $A$ and $B$, both contained in the region where $w\geq a\delta$, and not orthogonal to $\Gamma(a, \delta)$. Observing that the radial projection of $\mathcal{E}_a^2$ over the $\{(x, y, 0, w) \in \mathbb{R}^4\,|\, x^2+y^2+w^2 =1\}$ is a contraction, we conclude that any portion of the equator $\mathcal{E}_a^2\cap \{w=0\}$ of length $\pi$ is minimizing, and these are the only minimizing geodesics of $\mathcal{E}_a^2$ joining those endpoints. Clairaut's relation implies that every geodesic $\alpha(t)=(x(t), y(t), 0, w(t))$, with $\alpha(0) = A$ and $w'(0)\leq 0$ must cross the equator, $\mathcal{E}_a^2\cap \{w=0\}$, before possibly arriving at $B$. In particular, if $\alpha$ joins $A$ and $B$, it must cross the equator twice and can not be minimizing from $A$ to $B$. If $\delta>0$ is small enough, by compactness, the minimizing geodesics joining $A$ and $B$ must be close to the the equator. In particular, $y'(0)\neq 0$ and these curves are not orthogonal to $\Gamma(a, \delta)$. The reflection symmetry across $y=0$ show that there must be two such geodesics. 

Let us approach the case of $\mathcal{E}_a^3$. Reflection across the hyperplane $z=0$ leaves the ellipsoid invariant and fixed $\mathcal{E}_a^2$. Therefore, $\mathcal{E}_a^2$ is totally geodesic. Curves $\alpha(t)=(x(t), y(t), 0, w(t))$ with $\alpha(0) = A$ that are geodesics of $\mathcal{E}_a^2$ are also geodesics of $\mathcal{E}_a^3$. The other geodesics emanating from $A$ are obtained by rotation and have the form $\alpha_{\theta}(t) := (x(t), \cos \theta \cdot y(t), \sin \theta\cdot  y(t), w(t))$. By the argument explained before, if $\alpha_{\theta}$ is minimizing from $A$ to $B$, then $\alpha$ is also minimizing joining the same endpoits. In particular, it follows that $w(t)\geq a\delta$ between these two points. For small $\delta$, the minimizing curves must be close to $\mathcal{E}_a^3\cap \{w=0\}$, so these are not orthogonal to $\Sigma(a, \delta)$. Rotational symmetry implies the existence of infinitely many such geodesics.  
\end{exam}


\section{Width of positively curved 2-spheres}\label{sect-intrinsic-thmb}

In this section we prove the results stated in Section \ref{subsect-intrinsic-thmb}, about the min-max width of positively curved $2$-spheres, and more generaly of metrics which do not admit stable closed geodesics. We also provide details of the relevant examples related to those theorems.

\subsection{Proof of Theorem \ref{thm-3-simult-st}}\label{subsect-proof-thm-3-simult-st}


In this section, we describe the proof of Theorem \ref{thm-3-simult-st}. We find that if three different minimizing geodesics $\gamma_1, \gamma_2, \gamma_3 : [0, a] \rightarrow S^2$ joining $\gamma_i(0)=x$ and $\gamma_i(a)=y$, where $\{x,y\}$ is a critical pair realizing $W_d(\Sigma)$, satisfy that the three angles determined by the $\gamma_i^{\prime}(0)$ at the origin of $T_xS^2$, and the three angles determined by the vectors $-\gamma_i^{\prime}(a)$ at the origin of $T_yS^2$, all measure at most $\pi$, then the set $\{\gamma_1, \gamma_2, \gamma_3\}$ is simultaneously stationary with endpoints $x$ and $y$.

Furthermore, we prove that if two different minimizing geodesics $\gamma_{1}$ and $\gamma_{2}$ form a closed geodesic, that is, $\gamma_{1}\cup\gamma_{2}=\gamma$ such that $\gamma^{\prime}_{1}(0)=- \gamma^{\prime}_2(0)$, $\gamma^{\prime}_{1}(a)=- \gamma^{\prime}_2(a)$, then this closed geodesic has index one.

Before proceeding, we present the following results about the existence of sweepouts with specific properties. The next lemma ensures that we can choose a vector field that deforms a closed geodesic which is not stable to decrease length in second order in a well-defined direction within $S^2$.

\begin{lemm}\label{funct-exist-lem}
Let $\gamma$ be a closed geodesic and let $\{\gamma^{\prime}(s), V(\gamma(s))\}$ be an orthonormal basis along $\gamma$. If $\gamma$ is not a stable closed geodesic, then there exists a positive function $u$ on $\gamma$ such that $\mathcal{Q}(u V,u V)<0$.
\end{lemm}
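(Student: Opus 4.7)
The plan is to recast the second-variation quadratic form as a one-dimensional eigenvalue problem on the circle and then exploit the positivity of the first eigenfunction of a self-adjoint Schrödinger-type operator.

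The first step is to observe that the unit normal $V$ along $\gamma$ is automatically parallel. Since the normal bundle of $\gamma$ in $S^2$ is one-dimensional, $V' := \nabla_{\gamma'} V$ must be orthogonal to $V$ (because $|V| \equiv 1$) and also orthogonal to $\gamma'$ (differentiating $\langle V, \gamma'\rangle = 0$ and using $\nabla_{\gamma'}\gamma' = 0$), and hence $V' = 0$. Consequently, for any smooth periodic $u:[0,a] \to \mathbb{R}$ with $u(0) = u(a)$, the field $W = uV$ is normal to $\gamma$ and satisfies
\begin{equation*}
\mathcal{Q}(uV, uV) = \int_0^a \bigl( u'(s)^2 - K(\gamma(s))\, u(s)^2 \bigr)\, ds.
\end{equation*}
Moreover, every smooth normal field along $\gamma$ arises in this form, so the Morse index of $\gamma$ coincides with the index of this scalar quadratic form.

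The second step is to introduce the self-adjoint operator $Lu := -u'' - K(\gamma(s))\, u$ on the space of smooth periodic functions on $[0,a]$. By standard theory, $L$ has a discrete, bounded-below spectrum $\lambda_1 \leq \lambda_2 \leq \cdots$, and the Rayleigh characterization gives $\lambda_1 = \inf_{u \not\equiv 0} \mathcal{Q}(uV, uV) / \|u\|_{L^2}^2$. Since $\gamma$ is not stable, there exists a periodic $u_0$ with $\mathcal{Q}(u_0 V, u_0 V) < 0$, which forces $\lambda_1 < 0$. It then suffices to produce a \emph{positive} first eigenfunction $\phi_1$, for then taking $u := \phi_1$ yields $\mathcal{Q}(uV, uV) = \lambda_1 \|\phi_1\|_{L^2}^2 < 0$.

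The main obstacle is precisely this last point: under periodic boundary conditions one must argue separately that the first eigenspace contains a nowhere-vanishing element. The plan is to start with any first eigenfunction $\phi$ and consider $|\phi|$. Because $(|\phi|')^2 = (\phi')^2$ almost everywhere and $|\phi|^2 = \phi^2$, the function $|\phi|$ is again a minimizer of the Rayleigh quotient, hence a weak solution of $Lu = \lambda_1 u$; one-dimensional elliptic regularity then gives smoothness. If $|\phi|$ vanished at some $s_0$, then $s_0$ would be a minimum, so $|\phi|'(s_0) = 0$ as well, and the uniqueness of solutions of the linear second-order ODE $u'' = -(K + \lambda_1) u$ would force $|\phi| \equiv 0$, a contradiction. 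Therefore $|\phi|$ is strictly positive on $[0,a]$ and serves as the desired function $u$.
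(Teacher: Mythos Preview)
Your argument is correct. The paper itself omits the proof and simply refers to Lemma~4.2 of \cite{AMS}, where the same standard first-eigenfunction argument is used: reduce to the scalar Schr\"odinger operator on the circle, use instability to get $\lambda_1<0$, and take the positive ground state as $u$. Your write-up matches this approach and fills in the details (parallelism of $V$, the $|\phi|$ trick, and ODE uniqueness) cleanly.
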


The proof of Lemma \ref*{funct-exist-lem} is similar to the proof of Lemma 4.2 of \cite{AMS} and is omitted here. On the case of the three minimizing geodesics, consider the following terminology. Let $\theta_1\in(0,\pi]$ be the measure of angle determined by the vectors $\gamma^{\prime}_1(0)$ and $\gamma^{\prime}_2(0)$ in the region of the tangent plane of $S^2$ at $x$ that does not contain $\gamma^{\prime}_3(0)$. Similarly, define the angles $\theta_2$ between 
$\gamma^{\prime}_2(0)$ and $\gamma^{\prime}_3(0)$, and $\theta_3$ between $\gamma^{\prime}_3(0)$ and $\gamma^{\prime}_1(0)$.

\begin{prop}\label{prop-exis-swee}
Let $(S^{2},g)$ be a Riemannian sphere. Suppose $(S^{2}, g)$ contains no closed geodesics that are stable. Let $\{x,y\}\subset S^{2}$ be a critical point of the distance function with $x\neq y$. Suppose that $x$ and $y$ are joined by three distinct minimizing geodesics $\gamma_{i}$  for $i\in\{1,2,3\}$, such that no two of them together form a closed geodesic.  Furthermore, the angles $\theta_1$, $\theta_2$, and $\theta_3$ described above are less than or equal to $\pi$. 

Then, there exists a sweepout $\psi\in[\Phi_0]$ of $S^2$ such that $dist(\psi(p))\leq dist(x,y)$ for every $p\in S^2$.
\end{prop}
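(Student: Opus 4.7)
The plan is to construct an explicit sweepout $\psi : S^2 \to \mathcal{P}_{S^2}$ in the homotopy class $[\Phi_0]$ satisfying $dist(\psi(p)) \leq a := dist(x,y)$ for every $p \in S^2$. The key observation is that for any $s, s' \in [0, a]$ and any indices $i, j \in \{1, 2, 3\}$, the pair $\{\gamma_i(s), \gamma_j(s')\}$ has distance at most $\min(s + s',\; 2a - s - s') \leq a$, by the triangle inequality applied through $x$ (giving $s + s'$) and through $y$ (giving $2a - s - s'$). The three minimizing geodesics partition $\Sigma = S^2$ into three lunes $L_{12}, L_{23}, L_{31}$, and this combinatorial structure supplies the scaffolding needed to assemble such pairs into a continuous two-parameter family without ever exceeding the distance $a$.

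Concretely, I would model the domain $S^2$ as the suspension $([0, a] \times S^1)/\sim$, with $\{0\}\times S^1$ collapsed to a pole mapped to $\{x\}$ and $\{a\}\times S^1$ collapsed to a pole mapped to $\{y\}$, dividing the equatorial $S^1$ into three equal arcs $A_1, A_2, A_3$ with $A_i$ associated to $L_{i, i+1}$ (indices mod $3$). On arc $A_i$, parametrized by $\theta \in [0, 1]$, I would set
\[
\psi(s, \theta) = \{\gamma_i(\sigma_1(s,\theta)), \gamma_{i+1}(\sigma_2(s,\theta))\},
\]
with arclength functions $\sigma_1, \sigma_2 : [0,a]\times[0,1]\to [0,a]$ chosen so that (i) at the poles $s=0, a$ both components collapse to $x$ and $y$ respectively, (ii) at the shared boundary $\theta=0$ (respectively $\theta=1$) the limits from $A_i$ and $A_{i-1}$ (respectively $A_{i+1}$) coincide by virtue of having one component on the common geodesic $\gamma_i$ (respectively $\gamma_{i+1}$) at the appropriate arclength and the other pinned at $x$ or $y$, and (iii) the triangle-inequality estimate above applies. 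The transition between ``near $x$'' pairs for $s \leq a/2$ and ``near $y$'' pairs for $s \geq a/2$ will be made by matching at an intermediate cross-section where the pair has the form $\{\gamma_i(a/2), \gamma_{i+1}(a/2)\}$. Once the $\sigma_j$ are chosen, the distance bound is immediate and continuity is a direct verification.

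The main obstacle, and the step where the full hypothesis is really used, is to verify that $\psi$ lies in the class $[\Phi_0]$ rather than being null-homotopic or representing some other element of $\pi_2(\mathcal{P}_{S^2}) \cong \mathbb{Z}$. To this end, I would lift $\psi$ to the total space of the Hopf fibration $S^1 \to S^5 \to \mathbb{CP}^2 \cong \mathcal{P}_{S^2}$, exactly as in the computation carried out for $\Phi_0$ in Section \ref{subsect-std-sweep}, representing $\psi$ by a disk-map into $S^5$ and computing the winding number of its boundary restriction to the fiber circle. The angular conditions $\theta_1, \theta_2, \theta_3 \leq \pi$ at both $T_x S^2$ and $T_y S^2$ are essential here: they guarantee that the tangent vectors $\gamma_i^\prime(0)$ (and likewise the vectors $-\gamma_i^\prime(a)$) are arranged in the same cyclic order around the origin of the respective tangent plane as the arcs $A_i$ are arranged in the equatorial $S^1$ of the domain. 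This cyclic compatibility forces the boundary lift to wind exactly once around the fiber $S^1$, rather than zero times or with an opposite orientation, and thereby yields $[\psi] = [\Phi_0]$.
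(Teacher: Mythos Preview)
Your construction has a fatal flaw: it cannot lie in the class $[\Phi_0]$. Both component maps $\psi_1, \psi_2 : S^2 \to S^2$ that you describe take values only in the theta graph $\Gamma = \gamma_1 \cup \gamma_2 \cup \gamma_3$, a one-dimensional subset of $S^2$. Hence neither $\psi_1$ nor $\psi_2$ is surjective, so $\deg(\psi_1) = \deg(\psi_2) = 0$. But the class of $\{\psi_1,\psi_2\}$ in $\pi_2(\mathcal{P}_{S^2}) \cong \pi_2(\mathbb{CP}^2) \cong \mathbb{Z}$ is exactly $\deg(\psi_1)+\deg(\psi_2)$: the quotient $S^2 \times S^2 \to \mathbb{CP}^2$ sends each factor class in $H_2(S^2\times S^2)$ to the generator of $H_2(\mathbb{CP}^2)$ (this is precisely what the computations in Sections~\ref{subsect-std-sweep} and~\ref{subsect-not-sweepout} establish). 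So your map represents the class $0$, not $1=[\Phi_0]$, and the Hopf-fibration lift you propose would return winding number zero. A further warning sign is that your argument never invokes the hypothesis that $(S^2,g)$ has no stable closed geodesics; without that hypothesis the proposition is false (see Example~\ref{exam-critical-value-below-width}), so any proof that does not use it must be wrong.

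The paper's proof takes a genuinely different route and explains where the missing hypothesis enters. The three geodesics cut $S^2$ into three lunes $\Omega_1,\Omega_2,\Omega_3$; the angle conditions $\theta_i\le\pi$ make each $\Omega_j$ locally convex, so the Birkhoff curve-shortening process applied to $\partial\Omega_j = \gamma_l\cup\gamma_k$ stays inside $\Omega_j$. The assumption of no stable closed geodesics is then used to show that each such iteration terminates at a point rather than at a closed geodesic; this yields continuous degree-one maps $F_j:\mathbb{D}\to\overline{\Omega_j}$ with $L(F_j\circ c_r)\le 2\,dist(x,y)$ for every concentric circle $c_r$. Gluing the three disks along their boundaries gives a degree-one map $F:S^2\to S^2$, and the sweepout is $\psi(p)=\{F(p),F(s(p))\}$ where $s(p)$ is the centre of the disk containing $p$. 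Here the first component has degree one and the second is homotopic to a constant, so $\psi\in[\Phi_0]$; and $dist(\psi(p))\le \tfrac12 L(F_j\circ c_{|p|})\le dist(x,y)$. The essential content you are missing is that one must actually \emph{fill in} the lunes with a two-dimensional family of points, not merely slide along the boundary geodesics.
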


\begin{proof}		
We write $\Omega_{1}$, $\Omega_{2}$, and $\Omega_{3}$ to represent the connected components of $S^2\setminus\{\gamma_{1}\cup\gamma_{2}\cup\gamma_{3}\}$ bounded by $\gamma_{1}\cup\gamma_{2}$, $\gamma_{2}\cup\gamma_{3}$, and $\gamma_{3}\cup\gamma_{1}$, respectively. 

Initially, we determine the parametrization of the curves $\gamma_{l}\cup\gamma_{k}$ for $k\neq l$, and $k,l\in\{1,2,3\}$. Let $\gamma_{l}:[0,a]\to\Omega_{j}, \gamma_{k}:[0,a]\to\Omega_{j}$ be minimizing geodesics in $\Omega_{j}$ with $\gamma_{l}(0)=\gamma_{k}(0)$ and $\gamma_{l}(a)=\gamma_{k}(a)$. From this, we obtain the parametrization $\gamma_{l}\cup\gamma_{k}:[0,a]\to\Omega_{j}$ given by $\gamma_{l}\cup\gamma_{k}(t)=\gamma_{l}(2t)$ for $t\in\left[ 0,\frac{a}{2}\right] $ and $\gamma_{l}\cup\gamma_{k}(t)=\gamma_{k}(2a-2t)$ for $t\in\left[ \frac{a}{2}, a \right]$. By the angle condition on the curves $\gamma_{l}$ and $\gamma_{k}$, the regions $\Omega_j$ with $\partial\Omega_{j}=\gamma_{l}\cup\gamma_{k}$ are locally convex. The next step is to sweep the three regions out completely. This will be achieved by iterating the Birkhoff curve shortening process.

In general, the Birkhoff curve shortening process describes how to shorten piecewise smooth closed curves of uniformly bounded length. Note that there exists an integer $N>2$ chosen sufficiently large such that $\frac{L(c)}{N}$ is smaller than the injectivity radius, $inj(S^2)$, of $(S^2,g)$. In this way, we obtain a partition $0=t_0,\cdots, t_N=a$ such that, for each $i=1,\cdots,N$, the image $c|_{[t_{i-1},t_i]}$ is contained in an open ball of $\Omega_{j}$, of radius smaller than the injectivity radius. Formally, one must use curves of uniformly bounded energy. Roughly speaking, the process is divided into two parts. The first part involves a geodesic approximation, where at the end of this approximation the piecewise smooth closed curve $c$ is sent into a piecewise smooth closed curve composed of minimizing geodesics between its vertices, determined by the partition of the interval $[0,a]$. In the second part, the midpoint of each minimizing geodesic is taken. By means of a second geodesic approximation, the piecewise smooth closed curve composed of minimizing geodesics is sent into a new piecewise smooth closed curve, now composed of minimizing geodesics joining each of the consecutive midpoints. In this way, the process generates an interpolation between the curve with which the process begins and the curve with which it concludes. In both parts of the process, the operation consists in replacing a small piece of the initial curve by a minimizing geodesic between the end points of the piece. Therefore, throughout the process, the length of these new curves is always decreasing. We refer the reader to \cite{AMS}, \cite{DM}, and Section 2 of \cite{CC} for detailed descriptions.

It is well-known that iteration of the process terminates at a point, or converges to a simple closed geodesic.( See, for instance, $Cf$. Section 2 of \cite{CC}). By the angular conditions between the curves $\gamma_{l}$ and $\gamma_{k}$ for $l\neq k$ where $l,k\in\{1,2,3\}$, we have that $\Omega_{j}$ are locally convex in the sense that the minimizing geodesic that joins two sufficiently close points in $\Omega_{j}$ is inside the region. Therefore, the Birkhoff shortening process starting at the curve $\gamma_{l}\cup\gamma_{k}$ for $l \neq k$ with $l,k,j\in\{1,2,3\}$ will remain in the region $\Omega_{j}$.

$\mathbf{Affirmation}$: Iteration of the Birkhoff curve shortening process starting at the curves $\gamma_{l}\cup\gamma_{k}$ end at points.

The statement is proved by contradiction. Suppose it is not true. Therefore, the process starts at the closed curve $\gamma_{l}\cup\gamma_{k}$ and ends at a simple closed geodesic $\sigma$ of length $L(\sigma)\leq L(\gamma_{l}\cup\gamma_{k})$.  From the previous observations made about the convexity of $\Omega_{j}$, the geodesic $\sigma$ actually lies in $\Omega_j$. Furthermore, $\sigma$ cannot intersect $\gamma_{l}\cup\gamma_{k}$ at any point; otherwise, $\sigma$ and $\gamma_{l}\cup\gamma_{k}$ would share the same tangent vector at some point, implying $\gamma_{l}\cup\gamma_{k}=\sigma$, which is an absurd because $\gamma_{l}\cup\gamma_{k}$ cannot form a closed geodesic.

Therefore, there exists a connected region $K_j\subset\Omega_j$ of the sphere $S^2$ such that the $\partial K_j$ is the union of $\partial\Omega_{j} = \gamma_j\cup \gamma_k$ and $\sigma$. Now, let us minimize the length of the piecewise smooth closed curves in $K_j$ that are freely homotopic to the curve $\gamma_{l}\cup\gamma_{k}$. Note that the circle $S^1$ is a deformation retract of $K_j$, therefore, $\pi_1(K_j)=\pi_1(S^1)=\mathbb{Z}$. Thus, the free homotopy class of $\gamma_{l}\cup\gamma_{k}$, $[\gamma_{l}\cup\gamma_{k}]$, is non-trivial. Therefore, given a sequence of piecewise smooth minimizing curves $\alpha_i$ in $[\gamma_{l}\cup\gamma_{k}]$, such that the lengths of these curves converge to the $\inf_{c\in[\gamma_{l}\cup\gamma_{k}]}L(c)$, there exists a closed geodesic $\beta_j$ in $K_j$ for which $\alpha_i\to\beta_j$ when $i\to\infty$, and $L(\beta_j)=\inf_{c\in[\gamma_{l}\cup\gamma_{k}]}L(c)$. The curve $\gamma_{l}\cup\gamma_{k}$ has no points in common with the curve $\beta_j$, otherwise, we would have $\beta_j=\gamma_{l}\cup\gamma_{k}$, which is not possible because $L(\beta_j)\leq L(\sigma)<L(\gamma_{l}\cup\gamma_{k})$.

Finally, we must consider two cases. The first case is when $\beta_j$ has no points in common with $\sigma$. In this situation, we can apply the second variation formula \ref{defi-quadratic-form-second-variation} and the minimization property of $\beta_j$ to conclude that $\beta_j$ is a stable closed geodesic in $K_j$. By hypothesis, this case can not happen.

The second case is when $\beta_j$ has some point in common with $\sigma$. Since these two curves are closed geodesics, they must coincide. Thus, we conclude that $\sigma$ must be embedded and minimize the length among the closed curves that are small perturbations of $\sigma$ contained in $K_j$. By the second variation formula and the Lemma \ref{funct-exist-lem}, we further deduce that $\sigma$ must be a stable closed geodesic. However, this contradicts the hypothesis.

In conclusion, in all cases, the assumption that the process of shortening of curves does not end at a point gives us the existence of a stable closed geodesic in $S^2$, which contradicts the hypothesis.

 Then, there exists a homotopy $H^j:[0,a]\times[0,1]\to\Omega_{j}$ such that $H^j(\cdot,1)=\gamma_{l}\cup\gamma_{k}$, $H^j(\cdot,0)$ is a point curve, and $L(H^j(\cdot,s))\leq L(\gamma_{l}\cup\gamma_{k})$ for all $s$. Moreover, let $F_{j}: \mathbb{D}\to\Omega_{j}$ be the map given by $F_j(se^{2\pi i \frac{t}{a}})=H^j_s(t)$, where $H^j$ is constructed from the Birkhoff curve shortening process. Note that $F_{j}(\partial \mathbb{D})=\gamma_{l}\cup\gamma_{k}$, $F_{j}(c_{s}(t))=H^j_{s}(t)$, $F_{j}(\widetilde{x})=x$, $F_{j}(\widetilde{y})=y$, where $c_{s}(t)=se^{2\pi i \frac{t}{a}}$ is the circle of radius $s$ in $\mathbb{D}$, $\widetilde{x}=1$, $\widetilde{y}=-1 \in \mathbb{D}$, $H^j_{1}(0) = x$, $H^j_{1}(a/2) = y$. Note that, as the homotopy deforms the curve $\gamma_{l}\cup\gamma_{k}$ to a point, it also deforms the curves $\gamma_{l}$ and $\gamma_{k}$, in such a way that their deformations always have the same endpoints. 
  
The curves $c_s$ sweep the disk $\mathbb{D}$ out and, by the construction of Birkhoff's process, the $F_j$ have degree one. Thus, $F_{j}(c_{s})$ is a family of curves that sweep the region determined by $\gamma_{l}\cup\gamma_{k}$ such that, for $p\in c_s$ and $s(p)=dist(p,0)$,
  \begin{eqnarray}\label{eq: 3.3 }
  	dist(F_j(p),F_j(s(p)))  \leq  \frac{L( F_{j}\circ c_{s(p)} )}{2}
   \leq  dist(x,y).
  \end{eqnarray}
  
The curves obtained by the maps $F_j$ completely sweep the regions $\Omega_{j}$. Consequently, the collection of these families of curves sweeps the sphere $S^2$. Since the domain of $F_j$ maps is the disk $\mathbb{D}$, we can not define the sweepouts yet. Thus, we denote $B_j$ as the disk that is the domain of the map $F_j$, and
$\sqcup^{3}_{j=1}B_j$ the disjoint union of these disks. From this, we define the following equivalence relation: given $z\in\partial B_l$ and $w\in\partial B_k$ for $k\neq l$, we say that $z\sim w$ if $F_l(z)=F_k(w)$. Note that the quotient space $\sqcup^{3}_{j=1}B_j/\mathord\sim$ is homeomorphic to the topological sphere $S^2$. We define the map $F: \sqcup^{3}_{j=1}B_j/\mathord\sim\to\Omega_j$, given by $F(p)=F_j(p)$ for $p\in B_j$. Note that, by the equivalence relation described above, the map $F$ is well-defined. Furthermore, it is a map from the sphere to itself. Now we can define the map $\psi: S^2\to\mathcal{P}_{S^2}$ by $\psi(p)= \{F(p),F(s(p))\}$.

We claim that $\psi\in [\Phi_{0}]$. Notice that the map $p\in S^2\mapsto F(s(p))$ has image contained in the union of the three curves $s\mapsto H^j_s(0)$, joining $H^j_1(0)=x$ and the point $H^j_0(0)$ at the end of Birkhoff's process in $\Omega_j$. Therefore, it is clearly possible to construct a homotopy between $p\in S^2\mapsto F(s(p))$ and the constant map $p\in S^2\mapsto x$. It follows from this property that the second entry of the map $\psi$ defined above is homotopic to a constant map. 

For the first entry of $\psi$, notice that any map from the sphere to itself is homotopic to a multiple of the identity. (See Section 4.1 of \cite{Hat}). Therefore, there exists a homotopy $\widehat{H} : S^2\times[0,1]\to S^{2}$ such that $\widehat{H}(p,0)=F(p)$ and $\widehat{H}(p,0)=n\cdot Id(p)$, where $n$ is an integer. According to degree theory, the degree of a function remains invariant under homotopies. Therefore, $1=deg(F)=deg (n\cdot Id)$, and consequently, $n=1$. It follows from this fact that the map $F$ is homotopic to the identity map $Id$. Thus, $\psi\in[\Phi_{0}]$ and by, \ref{eq: 3.3 }, $dist(\psi(p))\leq dist(x,y)$. Thus, we obtain the desired sweepout.
\end{proof}

Proposition \ref{prop-exis-swee} provides a sweepout of $S^2$ that satisfies a property similar to the one that we need. However, this is not sufficient to conclude Theorem \ref{thm-3-simult-st}. When the additional hypothesis that $\gamma_{1}$, $\gamma_{2}$, and $\gamma_{3}$ are not simultaneously stationary is assumed, as in Definition \ref{defi-simultaneously-stationary}, one must produce a sweepout whose distances are strictly below the level $dist(x,y)$, of Riemannian distance between the endpoints of those curves.

\begin{prop}\label{3-simult-strict}
Let $(S^{2}, g)$ be a Riemannian sphere. Suppose $(S^{2}, g)$ contains no closed geodesics that are stable. Let $\{x, y\}\subset S^{2}$ be a critical point of the distance function with $x\neq y$. Suppose that $x$ and $y$ are joined by three distinct minimizing geodesics $\gamma_{i}$  for $i\in\{1,2,3\}$, such that no two of them together form a closed geodesic. Furthermore, suppose that the angles $\theta_1$, $\theta_2$ and $\theta_3$ described above are less than or equal to $\pi$.

If $\{\gamma_1,\gamma_{2},\gamma_3\}$ is not simultaneously stationary, then there exists a sweepout $\psi\in[\Phi_0]$ of $S^2$ such that $dist(\psi(p))<dist(x,y)$ for every $p\in S^2$.
\end{prop}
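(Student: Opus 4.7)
The plan is to modify the construction from Proposition \ref{prop-exis-swee} by making an infinitesimal perturbation of the endpoints $x$ and $y$ that strictly shortens each of the three geodesics $\gamma_1, \gamma_2, \gamma_3$ simultaneously. The key input is the non-simultaneously-stationary hypothesis: applying the convex-separation argument used in the proof of Proposition \ref{prop-characterization-critical-simult-st} to the finite set $\{\nu_{\gamma_1}^T, \nu_{\gamma_2}^T, \nu_{\gamma_3}^T\}$ in $T_xS^2 \times T_yS^2$, one obtains a vector $(w_1, w_2) \in T_xS^2 \times T_yS^2$ satisfying
\[
g\bigl((w_1, w_2), \nu_{\gamma_i}\bigr) < 0 \quad \text{for every } i = 1, 2, 3.
\]

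Then, for $\epsilon>0$ small, set $x_\epsilon = \exp_x(\epsilon w_1)$ and $y_\epsilon = \exp_y(\epsilon w_2)$. Smooth dependence of geodesics on boundary data produces, for each $i$, a geodesic $\gamma_i^\epsilon$ from $x_\epsilon$ to $y_\epsilon$ that is $C^1$-close to $\gamma_i$, and the first variation formula yields
\[
L(\gamma_i^\epsilon) = L(\gamma_i) + \epsilon \cdot g\bigl((w_1, w_2), \nu_{\gamma_i}\bigr) + O(\epsilon^2),
\]
so $L(\gamma_i^\epsilon) < L(\gamma_i) = dist(x,y)$ for all small $\epsilon>0$, uniformly in $i$. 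By continuity, for $\epsilon$ sufficiently small, the three perturbed geodesics continue to satisfy the pairwise angle conditions, no two of them combine into a closed geodesic, and the three regions $\Omega_j^\epsilon$ they bound remain locally convex. I would then apply the proof of Proposition \ref{prop-exis-swee} verbatim to $\{\gamma_1^\epsilon, \gamma_2^\epsilon, \gamma_3^\epsilon\}$: the iterated Birkhoff shortening argument goes through because the global hypothesis that $(S^2,g)$ carries no stable closed geodesics is unaffected by the perturbation. This produces maps $F_j^\epsilon : B_j \to \Omega_j^\epsilon$ and a continuous sweepout $\psi_\epsilon(p) = \{F^\epsilon(p), F^\epsilon(s(p))\}$ of $S^2$ satisfying
\[
dist(\psi_\epsilon(p)) \le \tfrac{1}{2}\bigl(L(\gamma_l^\epsilon) + L(\gamma_k^\epsilon)\bigr) < dist(x,y)
\]
for every $p \in S^2$, the strict inequality coming from the first-variation estimate above.

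To finish, I would verify $\psi_\epsilon \in [\Phi_0]$ by running the entire construction on the one-parameter family of endpoints $(\exp_x(s\epsilon w_1), \exp_y(s\epsilon w_2))$, $s \in [0,1]$; the resulting continuous family of sweepouts joins $\psi_0$ (the sweepout from Proposition \ref{prop-exis-swee}, which lies in $[\Phi_0]$) to $\psi_\epsilon$, forcing $\psi_\epsilon$ into the same homotopy class. Equivalently, one may rerun the degree-one argument at the end of the proof of Proposition \ref{prop-exis-swee} directly for $F^\epsilon$. The main technical obstacle lies in the perturbation step: one must choose $(w_1, w_2)$ within the open cone cut out by the three strict inequalities and, if some of the angles equal $\pi$, select the direction so that the angle conditions persist robustly along the deformation. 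Similarly, a uniform-in-$\epsilon$ exclusion of a stable limiting closed geodesic inside the Birkhoff iteration on $\Omega_j^\epsilon$ is needed; continuity, compactness of the set of minimizing curves, and the global no-stable-closed-geodesic hypothesis combine to rule out any such degeneracy. The remainder is a continuous deformation of the argument in Proposition \ref{prop-exis-swee}.
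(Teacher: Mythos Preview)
Your approach differs from the paper's and, while close in spirit, carries a gap you acknowledge but do not close. The paper does not perturb the endpoints before running Proposition~\ref{prop-exis-swee}; instead it first builds the sweepout $\psi$ from the unperturbed $\gamma_1,\gamma_2,\gamma_3$ (which satisfy the angle hypotheses as stated), observes that $dist(\psi(p))=dist(x,y)$ only on the compact set $K$ where $\psi(p)=\{x,y\}$ and the associated Birkhoff curve $\Gamma_p$ equals some $\gamma_l\cup\gamma_k$, and then flows the whole sweepout by the flow $\phi_{\bar s}$ of a vector field $X$ extending $(v_1,v_2)$. On $K$ the first variation of $L(\Gamma_p)$ in the direction $X$ equals $g((v_1,v_2),\nu_{\gamma_l})+g((v_1,v_2),\nu_{\gamma_k})<0$; continuity in $(p,\bar s)$ plus a cutoff then yields the strict inequality everywhere.

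Your perturb-first strategy faces two concrete obstacles. First, if some $\theta_i=\pi$, an admissible direction $(w_1,w_2)$ may push that angle past $\pi$, destroying the local convexity of $\Omega_j^\epsilon$ on which the ``Birkhoff stays inside'' step depends; whether the open cone $\{g((w_1,w_2),\nu_{\gamma_i})<0\}$ always contains a direction compatible with the angle constraints is a Jacobi-field question unrelated to those linear inequalities, and you give no argument for it. Second, the smooth dependence of $\gamma_i^\epsilon$ on $\epsilon$ requires that $y$ not be conjugate to $x$ along $\gamma_i$; for a minimizing geodesic the endpoint can be the first conjugate point, in which case the implicit-function-theorem construction of $\gamma_i^\epsilon$ fails. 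The paper's flow-after-construction argument sidesteps both: the Birkhoff process is run once on the original convex regions, and only the resulting family of piecewise-geodesic curves is moved by an ambient flow, so neither a new geodesic boundary-value problem nor convexity of perturbed regions ever enters.
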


\begin{proof}
The hypotheses in the statement imply that Proposition \ref{prop-exis-swee} can be applied. Then, we obtain a sweepout $\psi = (\psi_1, \psi_2) : S^{2}\to\mathcal{P}_{S^{2}}$ such that $dist(\psi(p))\leq dist(x,y)$, for all $p\in S^{2}$. Recall that, along the construction of $\psi$, the points $\psi_1(p)$ and $\psi_2(p)$ belong to a closed piecewise smooth curve $\Gamma_p: t\in [0, a]\mapsto H^j_{s(p)}(t)$ obtained from Birkhoff's process starting with $H^j_1$, a parametrization of $\gamma_l\cup\gamma_k=\partial\Omega_j$, for some $j=1, 2, 3$, where $\Omega_j$ are the connected components of $S^2\setminus\{\gamma_1, \gamma_2, \gamma_3\}$. Moreover, the process of Birkhoff gives us that the first variation of the length of $\Gamma_p$ in the direction of a fixed vector field is continuous on $p \in S^2$. Finally, looking back at the proof of Proposition \ref{prop-exis-swee}, we have that $dist(\psi(p)) = dist(x,y)$ if, and only if, $\psi(p)=\{x,y\}$ and $\Gamma_p = \gamma_l\cup \gamma_k$ for some $k$ and $l$. Let $K \subset S^2$ be the compact set of points at which $\psi(p)=\{x,y\}$.

In addition, since $\{\gamma_{i}\}$, $i\in\{1, 2, 3\}$, is not simultaneously stationary, there exist vectors $v_1\in T_x S^2$ and $v_2\in T_y S^2$ such that 
	\begin{equation}\label{eq: 4.1}
		\left\langle (v_1,v_2),\nu_{\gamma_1}\right\rangle <0, \;\; 
		\left\langle (v_1,v_2),\nu_{\gamma_2}\right\rangle <0, \;\; \text{and} \;\; \left\langle (v_1,v_2),\nu_{\gamma_3}\right\rangle<0,
	\end{equation}
where $\nu_{\gamma_1}=(-\gamma_{1}^{\prime}(0),\gamma_{1}^{\prime}(a))$, $\nu_{\gamma_2}=(-\gamma_{2}^{\prime}(0),\gamma_{2}^{\prime}(a))$ and $\nu_{\gamma_3}=(-\gamma_{3}^{\prime}(0),\gamma_{3}^{\prime}(a))$. Extend these two vectors to a vector field $X$ on $S^2$ whose support is contained in a small neighborhood of the set containing $x$ and $y$ only. Let $\phi: [0,\infty)\times S^2\to S^2$ be the unique maximal flow generated by $X$.
	
We claim that there exists an open set $U\subset S^2$, $\theta>0$, and $\overline{s}_0>0$ such that $K\subset U$, and the first variation of length in the direction of $X$ satisfies
$$
\delta \phi_{\overline{s}}(\Gamma_p) (X) < -\theta <0,
$$
for every $0\leq \overline{s} < \overline{s}_0$ and $p$ in the closure of $U$, where $\delta \phi_{\overline{s}}(\Gamma_p) (X)$ represents the first derivative of the length functional at the curve $\phi_{\overline{s}}(\Gamma_p)$, in the direction of $X$. In order to prove the claim, recall that $\Gamma_p = \cup_{i=0}^{N-1} [q_i(p), q_{i+1}(p)]$ is the union of $N$ geodesic segments with endpoints $q_i(p)$ and $q_{i+1}(p)$, which vary continuously with respect to $p$. Then, $\phi_{\overline{s}}(\Gamma_p)$ is the union of the smooth curves $\phi_{\overline{s}} ([q_i(p), q_{i+1}(p)])$. The derivatives of $t \mapsto\phi_{\overline{s}} \circ [q_i(p), q_{i+1}(p)](t)$ with respect to the variable $t$ depend continuously on $p$. In particular, $\delta \phi_{\overline{s}}(\Gamma_p) (X)$ is continuous on $\overline{s}$ and $p$ simultaneously. Finally, since, for every $p\in K$, we have $\Gamma_p = \gamma_l\cup \gamma_k$, it follows that $\delta \phi_{\overline{s}}(\Gamma_p) (X) < 0$, for $\overline{s}=0$ and $p\in K$. The existence of $U, \theta$, and $\overline{s}_0$ as above follow by continuity.

Let $\eta : S^2 \rightarrow [0,1]$ be a smooth function such that $\eta|_K$ is constant equal to one, and supported in $U$. The map that associates to each $(\overline{s}, p) \in [0, \overline{s}_0/2]\times S^2$ the pair of points
$$
\{\phi_{\eta(p)\overline{s}} (\psi_1(p)), \phi_{\eta(p)\overline{s}} (\psi_2(p))\}
$$
is a homotopy between the sweepout $\psi$, corresponding to $\overline{s}=0$, and a sweepout $\tilde{\psi}$, corresponding to $\overline{s} = \overline{s}_0/2$.

Notice that if $p \notin U$, then $\eta(p)=0$ and $dist(\tilde{\psi}(p)) = dist(\psi(p)) < dist(x,y)$. Suppose now that $p\in U$. In this case, the points of $\tilde{\psi}(p)$ belong to the curve $\phi_{\eta(p)\overline{s}_0/2}(\Gamma_p)$, which has length at most $L(\Gamma_p)\leq 2 \cdot dist(x,y)$. When $p\in K$, $\eta(p)\overline{s}_0/2 >0$, and the length of $\phi_{\eta(p)\overline{s}_0/2}(\Gamma_p)$ is strictly lower than $2\cdot dist(x,y)$. If $p\in U\setminus K$, then $L(\Gamma_p) < 2 \cdot dist(x,y)$. In any case,
$$
dist(\tilde{\psi}(p)) \leq \frac{1}{2} L(\phi_{\eta(p)\overline{s}_0/2}(\Gamma_p)) < dist(x,y). 
$$
\end{proof}

\begin{rmk}\label{rmk-more-minimizing-geodesics} 
In both Propositions \ref{prop-exis-swee} and \ref{3-simult-strict}, the argument remains valid for the case where there exists $k>2$ minimizing geodesics with endpoints $x$ and $y$, which are assumed not to be simultaneously stationary in the second result. One applies Birkhoff on each one of the $k$ connected components, and attach the constructed families to obtain the desired sweepouts. 
\end{rmk}

The next proposition has a proof similar to that of Proposition \ref{prop-exis-swee}. It applies, in particular, to a pair of minimizing geodesics whose endpoints are a nontrivial critical point of the distance, forming a closed geodesic.

\begin{prop}\label{exis-sweep-closed-geod}
	
	Let $(S^{2},g)$ be a Riemannian sphere which does not contain stable closed geodesics. Let $\{x,y\}\subset S^{2}$ be a critical point of the distance function with $x\neq y$. Suppose $x$ and $y$ are joined by two minimizing geodesics $\gamma_1$ and $\gamma_2$ such that $\gamma_1\cup\gamma_2=\gamma$ is a closed geodesic. Then, there exists a sweepout $\psi\in[\Phi_0]$ of $S^2$ such that $dist(\psi(p))\leq dist(x,y)$ for every $p\in S^2$.
\end{prop}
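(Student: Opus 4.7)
The plan is to mirror the structure of the proof of Proposition \ref{prop-exis-swee}, with the roles of the three regions $\Omega_1,\Omega_2,\Omega_3$ replaced by the two connected components $\Omega_1$ and $\Omega_2$ of $S^2 \setminus \gamma$. These are topological open disks whose common boundary is the smooth simple closed geodesic $\gamma = \gamma_1 \cup \gamma_2$. As before, the endgame is to construct, for each $j = 1, 2$, a continuous map $F_j : \mathbb{D} \to \overline{\Omega_j}$ arising from a family $H^j_s$ of closed curves of length at most $L(\gamma) = 2\,dist(x,y)$, interpolating between $\gamma$ on the boundary of $\mathbb{D}$ and a point at the center. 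Gluing along the boundary circle yields $F : S^2 \to S^2$ and the sweepout $\psi(p) = \{F(p), F(s(p))\}$ defined verbatim as in Proposition \ref{prop-exis-swee}.

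The essential new obstacle is that $\gamma$ is itself a smooth closed geodesic, so Birkhoff's process applied directly to $\gamma$ leaves it invariant and does not sweep anything. The way around this is to exploit the non-existence of stable closed geodesics: applying Lemma \ref{funct-exist-lem} to $\gamma$ with the unit normal $V_j$ chosen to point into $\Omega_j$, I obtain a positive function $u_j$ on $\gamma$ with $\mathcal{Q}(u_j V_j, u_j V_j) < 0$. Following the variation $\varepsilon u_j V_j$ for small $\varepsilon>0$ produces a smooth curve $\tilde\gamma_j \subset \Omega_j$ close to $\gamma$ with $L(\tilde\gamma_j) < L(\gamma)$, together with a length-non-increasing homotopy from $\gamma$ to $\tilde\gamma_j$ inside $\overline{\Omega_j}$.

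I would then run Birkhoff curve shortening in $\Omega_j$ starting from $\tilde\gamma_j$. Local convexity of $\Omega_j$ with respect to sufficiently short arcs is essentially free here: if a minimizing geodesic joining two close points of $\Omega_j$ met $\gamma$ at an interior point, then (since its endpoints both lie in $\Omega_j$ and the arc is short) it would have to be tangent to $\gamma$ at that point, and uniqueness of geodesics would force it to coincide with $\gamma$, which is impossible. Therefore the Birkhoff process stays in $\overline{\Omega_j}$. The argument that the iteration terminates at a point is the same as in Proposition \ref{prop-exis-swee}: if it converged to a simple closed geodesic $\sigma \subset \overline{\Omega_j}$ with $L(\sigma) \leq L(\tilde\gamma_j) < L(\gamma)$, then minimizing length in the free homotopy class of $\gamma$ in the annular region $K_j$ bounded by $\gamma$ and $\sigma$ yields either a stable closed geodesic disjoint from $\sigma$ (contradiction), or forces $\sigma$ itself to be a stable closed geodesic via Lemma \ref{funct-exist-lem} (again a contradiction).

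With the families $H^j_s$ and the maps $F_j$ at hand, I would assemble them exactly as in Proposition \ref{prop-exis-swee}: identify $\partial B_1$ with $\partial B_2$ via the common parametrization of $\gamma$, obtaining a quotient homeomorphic to $S^2$ and a continuous map $F : S^2 \to S^2$ of degree one (hence homotopic to the identity). Defining $\psi(p) = \{F(p), F(s(p))\}$, the two points of $\psi(p)$ always lie on a common Birkhoff curve of length at most $L(\gamma)$, which gives $dist(\psi(p)) \leq dist(x,y)$. The verification that $\psi \in [\Phi_0]$ is then a direct transcription of the homotopy argument at the end of the proof of Proposition \ref{prop-exis-swee}. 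The main obstacle I expect is the initial perturbation step combined with verifying that the perturbed Birkhoff iteration stays in $\overline{\Omega_j}$: here, unlike in Proposition \ref{prop-exis-swee} where strict convexity of the corner angles was an input, the local convexity of $\Omega_j$ is only borderline because $\partial\Omega_j$ is itself a geodesic, and it must be extracted from the smoothness of $\gamma$ and from the instability furnished by Lemma \ref{funct-exist-lem}.
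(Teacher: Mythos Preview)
Your approach is correct and follows the same overall strategy as the paper: perturb $\gamma$ into each side using Lemma \ref{funct-exist-lem}, run Birkhoff to a point, and glue. The paper organizes the construction slightly differently: it keeps the collar swept by the flow $\rho_s$, $s\in[-\varepsilon,\varepsilon]$, as a separate annular piece and starts Birkhoff only from the curves $\rho_{\pm\varepsilon}(\gamma)$; it then uses that these perturbed curves are \emph{strictly} convex toward the side away from $\gamma$ (a consequence of the positive-eigenfunction property together with the formula for the derivative of the geodesic curvature), so Birkhoff is confined to the region $U_{\pm\varepsilon}$ not containing $\gamma$. Your version folds the collar into the two disk maps and instead appeals to the borderline local convexity of the geodesic-bounded regions $\overline{\Omega_j}$ to keep Birkhoff inside. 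Both routes work; the paper's strictly convex barriers make the confinement step cleaner and avoid worrying about iterates touching $\partial\Omega_j$, at the cost of gluing three pieces (two disks and an annulus) rather than your two disks.
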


\begin{proof}
Let $V(\gamma(t))$ be such that $\{\gamma^{\prime}(t), V(\gamma(t))\}$ is a positive orthonormal basis, for every $t$. By hipothesis, $\gamma$ is not stable. Lemma \ref{funct-exist-lem} gives us a vector field $V$ on $S^2$, whose flow $\rho$ satisfies: $\{\rho_s(\gamma)\}$, $s\in [-\varepsilon, \varepsilon]$, is a smooth family of smooth closed curves such that
	\begin{eqnarray*}
		L(\rho_s(\gamma)) \leq 2\cdot dist(x,y)-\theta\cdot s^2,
	\end{eqnarray*}
	for some $\theta>0$. Recall that all $\rho_s(\gamma)$ belong to the same component of $S^2\setminus \{\gamma\}$, for $s \in (0, \varepsilon]$. The same holds for $s\in [-\varepsilon, 0)$. Note that even though $\gamma$ has this property, it does not mean that the curves $\gamma_{1}$ and $\gamma_{2}$ are decreasing in length simultaneously. However, since we are interested in the distance between two points in $\rho_s(\gamma)$, it is sufficient for the lengths of the the curves to decrease. Indeed, for $p_s,q_s \in \rho_s(\gamma)$, the distance between these points is strictly smaller than the distance between $x$ and $y$ when $s\neq 0$.
	
	For $p\in \gamma$, define $\psi(p)=\{p,x\}$. On a small neighborhood of $\gamma$ define $\psi(p)=\{p,\rho_{s}(x)\}$, where $s\in [-\epsilon,\epsilon]$ is the only number in this interval such that $p\in\rho_{s}(\gamma)$. In the region determined by the variation of the curve $\gamma$ by the flow associated with the vector field $V$ in $S^2$, we have
	\begin{eqnarray*}
		dist(\psi(p))  \leq  dist(x,y),	\text{ for every } p\in \rho_s(\gamma), s \in [-\epsilon,\epsilon].
	\end{eqnarray*}
The strict inequality occurs when $s\neq 0$. The next step is to completely sweepout the sphere $S^2$. 

Label the two closed regions of the sphere $S^2$ determined by $\gamma$ by $U_{1}$ and $U_{2}$, in such a way that $\rho_s(\gamma)\subset U_{1}$, for $s<0$, and $\rho_s(\gamma)\subset U_{2}$, for $s>0$. Since $\gamma$ is a closed geodesic, both regions are locally convex. 

Furthermore, the curves $\rho_s(\gamma)$, $s\in [-\varepsilon, \varepsilon]$ and $s\neq 0$, are also locally convex with respect to the regions determined by them that do not contain $\gamma$. This is a consequence of the choice of $V$, since its restriction to $\gamma$ is a positive eigenfunction of the Jacobi operator of $\gamma$ associated to a negative eigenvalue times a unit vector field normal to $\gamma$. The claim follows from the formula for the derivative of the geodesic curvature (or the mean curvature, in general) on a smooth deformation of a curve, possibly decreasing the constant $\varepsilon>0$ is needed. See the appendix of \cite{Amb2013}, for instance. 

Thus, Birkhoff's process, described in the proof of Proposition \ref{prop-exis-swee}, starting at either $\rho_{\pm\varepsilon}(\gamma)$ remains in the respective region $U_{\pm\varepsilon}$ determined by the initial curve and that does not contain $\gamma$. The processes starting at $\rho_{\pm\varepsilon}(\gamma)$, similarly to the arguments  in Proposition \ref{prop-exis-swee}, end at points. 

Then, there exist degree one continuous maps $F_{-} : \mathbb{D} \rightarrow U_{-\varepsilon}$ and $F_{+} : \mathbb{D} \rightarrow U_{\varepsilon}$ such that $F_{\pm}(\partial \mathbb{D}) = \rho_{\pm\varepsilon}(\gamma)$, $L(F_{\pm} \circ c_r) \leq L(\rho_{\pm\epsilon}(\gamma)) < L(\gamma)$, for all $r \in [0,1]$, where $c_r$ is the circle of radius $r$ centered at the origin of $\mathbb{D}$.

Let us recall that the region delimited by curves $\rho_{-\varepsilon}$ and $\phi_{\varepsilon}$ was swept out by the flow $\rho$. Denote this region of by $T$. In order to formalize the sweepout obtained by the three maps, let us start by fixing a parametrization of the closed geodesic $\gamma$ on the interval $[0,a]$, and letting $f :[0,a]\times [-\varepsilon, \varepsilon]\rightarrow S^2$ be given by $f(t, s) = \rho_s(\gamma(t))$. Since $f(0, s) = f(a, s)$, for every $s$, the map $f$ passes to the quotient $S^1\times [-\varepsilon, \varepsilon]$ of the equivalence relation $(0,s)\sim (a,s)$. 

The domains of the maps $F_{\pm}$ are disks and the domain of $f$ is an annulus. The disjoint union of the different domains is considered with the equivalence relation between boundary points: $p\sim [(t,-\varepsilon)]$ if $F_{-}(p) = f(t, -\varepsilon)$, and $p\sim [(t,\varepsilon)]$ if $F_{+}(p) = f(t, \varepsilon)$. The quotient is a topological sphere $S^2$, on which we can define a map $\psi$ by $\psi(p) := \{F_{\pm}(p), F_{\pm}(r(p))\}$, whenever $p$ belongs to the domain of $F_{\pm}$, where $r(p) = |p|$ is the distance from $p$ to the center of the respective disk. Define $\psi([(t,s)]) := \{f(t,s), f(0,s)\} = \{\rho_s(\gamma(t)), \rho_s(\gamma(0))\}$. Arguing similarly to the end of the proof of Proposition \ref{prop-exis-swee}, we verify that $\psi$ is a sweepout satisfying all the desired properties.
\end{proof}

\begin{rmk}\label{rmk-prop-generaliz-closed-geodesics}

Let $\gamma_i$, $i=1,2$, be the geodesics from the statement of Proposition \ref{exis-sweep-closed-geod}. The fact that $\gamma_i$ are minimizing was only relevant in this proof to write the estimate on the width in terms of $dist(x,y)$. Therefore, a slightly more general result holds: it suffices to assume that $\{x,y\}\subset S^{2}$ bounds two geodesics $\gamma_i$, $i=1, 2$, of the same length and such that $\gamma_1\cup\gamma_2=\gamma$ is a closed geodesic. Then, there exists sweepout $\psi\in[\Phi_0]$ of $S^2$ such that $dist(\psi(p))\leq \frac{L(\gamma)}{2} = L(\gamma_1) = L(\gamma_2)$, for every $p\in S^2$.
\end{rmk}

The last ingredient needed in the proof of the main theorem is the convex geometry result, Lemma \ref{lemm-convex-geometry}. Recall its statement in the introductory section of the article. The proof of Lemma \ref{lemm-convex-geometry} is postponed to Section \ref{sect.convex.geom}. We are now ready to present the proof of Theorem \ref{thm-3-simult-st}.

\begin{proof}[Proof of Theorem \ref{thm-3-simult-st}]
By Proposition \ref{prop-characterization-critical-simult-st} and Proposition \ref{prop-exis-swee}, every critical point $\{x,y\}\subset S^2$ of $D$ with $x \neq y$ is such that there exists a sweepout $\psi\in[\Phi_0]$ satisfying $dist(x,y)=\max_{p \in S^2} dist(\psi(p))$. By the definition of $W_d(S^2,g)$, the inequality $W_d(S^2,g)\leq dist(x,y)$ follows immediately. 

Let $\{x,y\}$ be a critical point with $dist(x,y)=W_d(S^2,g)>0$. Proposition \ref{prop-characterization-critical-simult-st} implies that there exists an integer $k\leq 5$ and a set of $k$ simultaneously stationary minimizing geodesics with endpoints $x$ and $y$. If $k\in \{2, 3\}$, then the proof is complete. Let us analyze the cases $k=4$ and $k=5$ separately. 

Suppose, by contradiction, that there exists a set $\{\gamma_1, \ldots, \gamma_k\}$ of simultaneously stationary minimizing geodesics with endpoints $x=\gamma_i(0)$ and $y=\gamma_i(a)$, $k\in \{4,5\}$, but no proper subset is simultaneously stationary. Let $v_i = \gamma_i^{\prime}(0) \in T_{x}S^2$ and $w_i = -\gamma_i^{\prime}(a)$, $1\leq i\leq k$. Fix an orientation on $S^2$. Up to relabelling, assume that $v_1, \ldots, v_k$ are disposed in this order as one moves around the unit circle of $T_x S^2$ in the clockwise (negative) direction. Since the minimizing geodesics do not intersect, it follows that the vectors $w_1, \ldots, w_k$ appear in this order in the counterclockwise (positive) direction of $T_y S^2$. By Definition \ref{defi-simultaneously-stationary}, there exist $\{ \lambda_i\}$, $1\leq i \leq k$, $\lambda_i \in (0,1)$ and $\sum_{i=1}^k \lambda_i =1$, and such that $\sum_{i=1}^k \lambda_i (v_i, w_i) = (0,0)$.

Let us begin with the case $k=4$. Notice that Lemma \ref{lemm-convex-geometry} ensures that there exists a choice of three indices $A\subset \{1, 2, 3, 4\}$, $\#A =3$, such that the origin belongs to the two triangles with vertices at the points $\{v_i : i \in A\}$ or $\{w_i : i \in A\}$. Hence, we obtain three distinct minimizing geodesics that satisfy the angle condition of Proposition \ref{3-simult-strict}. Then, this result implies that there exists a sweepout $\psi$ such that $dist(\psi(p))<dist(x,y)$, for all $p\in S^2$. However, this is an absurd, because it contradicts the definition of $W_d(S^2,g)$. This concludes the proof in the case $k=4$.

Assume now that $k=5$. We claim that there exists a choice of four indices $A\subset \{1, 2, 3, 4, 5\}$, $\#A =4$, such that the origin belongs to the two quadrilaterals with vertices at the points $\{v_i : i \in A\}$ or $\{w_i : i \in A\}$. Label $\theta_i$ the angles between $v_i$ and $v_{i+1}$ that do not contain the other vectors $v_j$, and $\theta_5$ the angle between $v_5$ and $v_1$. The origin belongs to the quadrilateral with vertices at $v_1$, $v_2$, $v_3$, and $v_4$ if, and only if, $\theta_4+\theta_5 \leq \pi$. In case $\theta_4+\theta_5>\pi$, it follows that $\theta_1+\theta_2<\pi$ and $\theta_2+\theta_3 <\pi$. In addition, either $\theta_5+\theta_1$ or $\theta_3+\theta_4$ is strictly lower than $\pi$. In conclusion, the quadrilaterals with vertices at $\{v_1, v_3, v_4, v_5\}$ or at $\{v_1, v_2, v_4, v_5\}$ contain the origin. And one of the two quadrilaterals with vertices at $\{v_2, v_3, v_4, v_5\}$ or at $\{v_1, v_2, v_3, v_5\}$ also contain the origin. It follows that there are at least three of the five vertices that can dropped from the list in such a way that the remaining four still determine a quadrilateral that contains the origin. The same holds with the vectors $w_i$ at $T_y S^2$. In particular, up to relabelling, assume that the origin belongs to the quadrilaterals $\{v_1, v_2, v_3, v_4\}$ and $\{w_1, w_2, w_3, w_4\}$. This implies that the angles determined by the velocity vectors of $\gamma_1$, $\gamma_2$, $\gamma_3$, and $\gamma_4$ at $x$ and $y$ all measure at most $\pi$.  Since $\{\gamma_1, \gamma_2, \gamma_3, \gamma_4\}$ is assumed not to be simultaneously stationary, the methods of Proposition \ref{3-simult-strict}, see Remark \ref{rmk-more-minimizing-geodesics}, give us a sweepout $\psi$ such that $\sup_{p\in S^2} dist(\psi(p))<dist(x,y)$.
\end{proof}

\subsection{Min-max width of the Calabi-Croke sphere}\label{subsect-gluing-2-triangles}

In this section, we present examples of positively curved Riemannian spheres such that a critical pair $\{x,y\}$ of the distance function that realizes the min-max width, $dist(x,y)=W_d(S^2, g)$, is joined by three distinct simultaneously stationary minimizing geodesics. We begin with the following associated example. The Calabi-Croke sphere is obtained by gluing two copies of a flat equilateral triangle with sides of unit length along their boundaries, $Cf$. \cite{CC}. Notice that the gluing is smooth away from the vertices of the triangles, but it creates three conical singularities $x_1$, $x_2$, and $x_3$ corresponding to cone angles measuring $2\pi/3$. Conjecturally, the Calabi-Croke metric attains the supremum of
the length of a shortest closed geodesic among all unit area Riemannian metrics on $S^2$. We refer the reader to \cite{Bal}, \cite{CC}, and \cite{SS}.

There are smooth metrics with non-negative curvature on the two-sphere that approximate the Calabi-Croke metric in such a way that they agree with the original metric outside an arbitrarily small neighborhood of the cone points. See Lemma 4.1 of \cite{ManSch} for a detailed construction. These metrics can now be smoothly pertubed to positively curved 2-spheres.

\begin{figure}[h]
	\centering
	\includegraphics[width=0.6\linewidth]{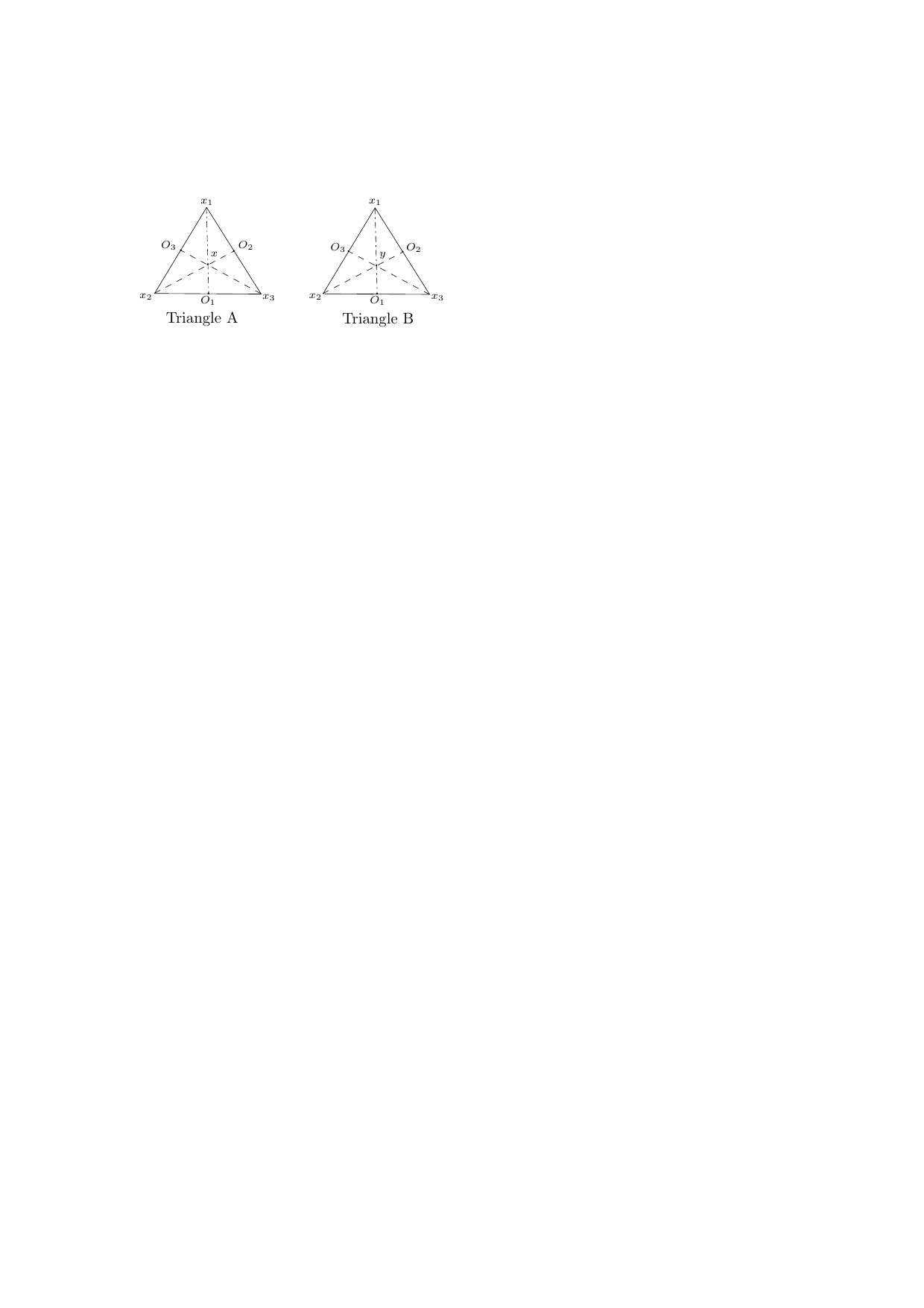}
	\caption{Triangular pieces of the Calabi-Croke sphere.}
	\label{fig:exam-three-simul-sta}
\end{figure}

By drawing the bisector of each angle formed at the vertices $x_1$, $x_2$, and $x_3$, we obtain the point $x$ in triangle $A$ and $y$ in triangle $B$, where $x$ and $y$ are the barycentres of triangles $A$ and $B$, respectively. See Figure \ref{fig:exam-three-simul-sta}. Note that the segments connecting the barycenters $x$ and $y$ of each triangle to the midpoints $O_i$ of each side give us three minimizing geodesics $\gamma_i = xO_i \cup O_i y$, $i=1, 2, 3$. Since these geodesics meet at $x$ and $y$ at $2\pi/3$ angles, it follows that $\{\gamma_1, \gamma_2, \gamma_3\}$ is simultaneously stationary, according to Definition \ref{defi-simultaneously-stationary}. 

In particular, Proposition \ref{prop-characterization-critical-simult-st} implies that $\{x,y\}$ is a critical point of the distance function with respect to the Calabi-Croke metric $(S^2, g^{\ast})$. By Theorem \ref{thm-3-simult-st}, the positively curved metrics $(S^2, g_m)$, $m\in \mathbb{N}$, approximating the Calabi-Croke sphere, as $n\rightarrow\infty$, are such that their min-max width is realized by a critical point $\{x_m, y_m\}$ of the distance that can be joined either by three simultaneously stationary minimizing geodesics, or by a pair of minimizing geodesics $\{\sigma_i^m\}$, $i=1, 2$, such that $\sigma_1^m\cup \sigma_2^m$ is smooth. We claim that the case of two minimizing geodesics do not occur for large $m$.

Suppose, by contradiction, that $W(S^2, g_m) = dist_{g_m}(x_m, y_m)$ and that there exists a pair of minimizing geodesics $\{\sigma_i^m \}$, $i=1, 2$, with endpoints $x_m$ and $y_m$ such that $\sigma_1^m\cup \sigma_2^m$ is smooth, for infinitely many values of $m$. Up to subsequence, $\sigma_1^m\cup \sigma_2^m$ smoothly converges away from $\{x_1, x_2, x_3\}$ to a closed curve $\sigma \subset S^2$ that is made of geodesic line segments away from the singular set $\{x_1, x_2, x_3\}$. If $\sigma$ passes through some $x_i$, then we clearly have $L(\sigma) \geq \sqrt{3}$. Indeed, there must be at least two segments of $\sigma$ passing through $x_i$, each of length as least the length of the height of the equilateral triangle relative to that vertex. Through any point $p$ in the Calabi-Croke sphere that is not a conical singularity, there are exactly three closed geodesics of length $\sqrt{3}$, with one point of self-intersection (an eight-figure). These are the curves that emanate from $p$ in the directions of the vectors that are orthogonal to the sides of the equilateral triangle that contains $p$. Moreover, these are the shortest closed geodesics. This fact and the contradiction hypothesis would imply that $L(\sigma) \geq \sqrt{3}$. The map $\psi: p\in S^2 \mapsto \{p,x\} \in \mathcal{P}_{S^2}$ is clearly a sweepout. Using the Calabi-Croke metric, we have
$$
\sup_{p\in S^2} dist_{g^{\ast}}(\psi(p)) = dist_{g^{\ast}}(x,y) = \frac{\sqrt{3}}{3}.
$$

Therefore, using the same sweepout, for every $\delta>0$, one has that $W_d(S^2, g_m) \leq \frac{\sqrt{3}}{3}+\delta$, for large $m \in \mathbb{N}$. In particular, $W_d(S^2, g_m)< \frac{\sqrt{3}}{2}$, which is a contradiction to the fact that $L(\sigma_1^m\cup \sigma_2^m)=2 W_d(S^2, g_m)$ converges to a number greater than or equal to $\sqrt{3}$, as $m\rightarrow \infty$. We conclude that, for large $m$, the widths of $(S^2, g_m)$ are realized by critical pairs $\{x_m, y_m\}$ joined by three simultaneously stationary minimizing geodesics.

\subsection{Example: removing the hypothesis on the non-existence of stable closed geodesics}\label{exam-critical-value-below-width}

In this section we give an example that shows that the hypothesis on the non-existence of stable closed geodesics, in Theorem \ref{thm-3-simult-st}, can not be removed. More precisely, we observe that the (intrinsic) width $W_d(\Sigma)$ of a certain rotationally symmetric metric on $\Sigma = M^2 = S^2$ with a long thin neck is necessarily strictly above the distance $dist(x,y)$, where $\{x,y\}$ is a critical point of the distance which bounds two minimizing geodesics $\gamma_1$ and $\gamma_2$ whose union $\gamma_1\cup \gamma_2$ is a closed locally minimizing geodesic detecting the topology of the neck.

Let $N = (0,0,1) \in S^2\subset \mathbb{R}^3$ and $0<r<\pi/6$ small (to be chosen). We use $d_0(x,y)$ to denote the distance function on the standard round sphere of radius one. Let $B^0_s$ denote the open ball of center $N$ and radius $s$ with respect to $d_0$. Consider a Riemannian metric $g$ on $S^2$ which can be realized as a rotationally symmetric sphere about the $z$-axis, and such that 
\begin{enumerate}
\item[(i)] $g|_{S^2\setminus B^0_{3r}}$ coincides with the standard round metric of radius one;
\item[(ii)] $g|_{\overline{B^0_r}}$ is a round spherical cap with strictly concave boundary;
\item[(iii)] the rotational circles $\partial B^0_s$ are concave to $B^0_s$, for $r \leq s < 2r$;
\item[(iv)] the rotational circles $\partial B^0_s$ are convex to $B^0_s$, for $2r < s \leq  3r$;
\item[(v)] $\gamma:=\partial B^0_{2r}$ is a closed geodesic with respect to the metric $g$;
\item[(vi)] the distance, with respect to $g$, between $\gamma$ and the $B^0_r\cup ( S^2\setminus B^0_{3r})$ is strictly greater than a quarter of the length of $\gamma$.
\end{enumerate}

\begin{figure}[h]
\includegraphics[scale=2]{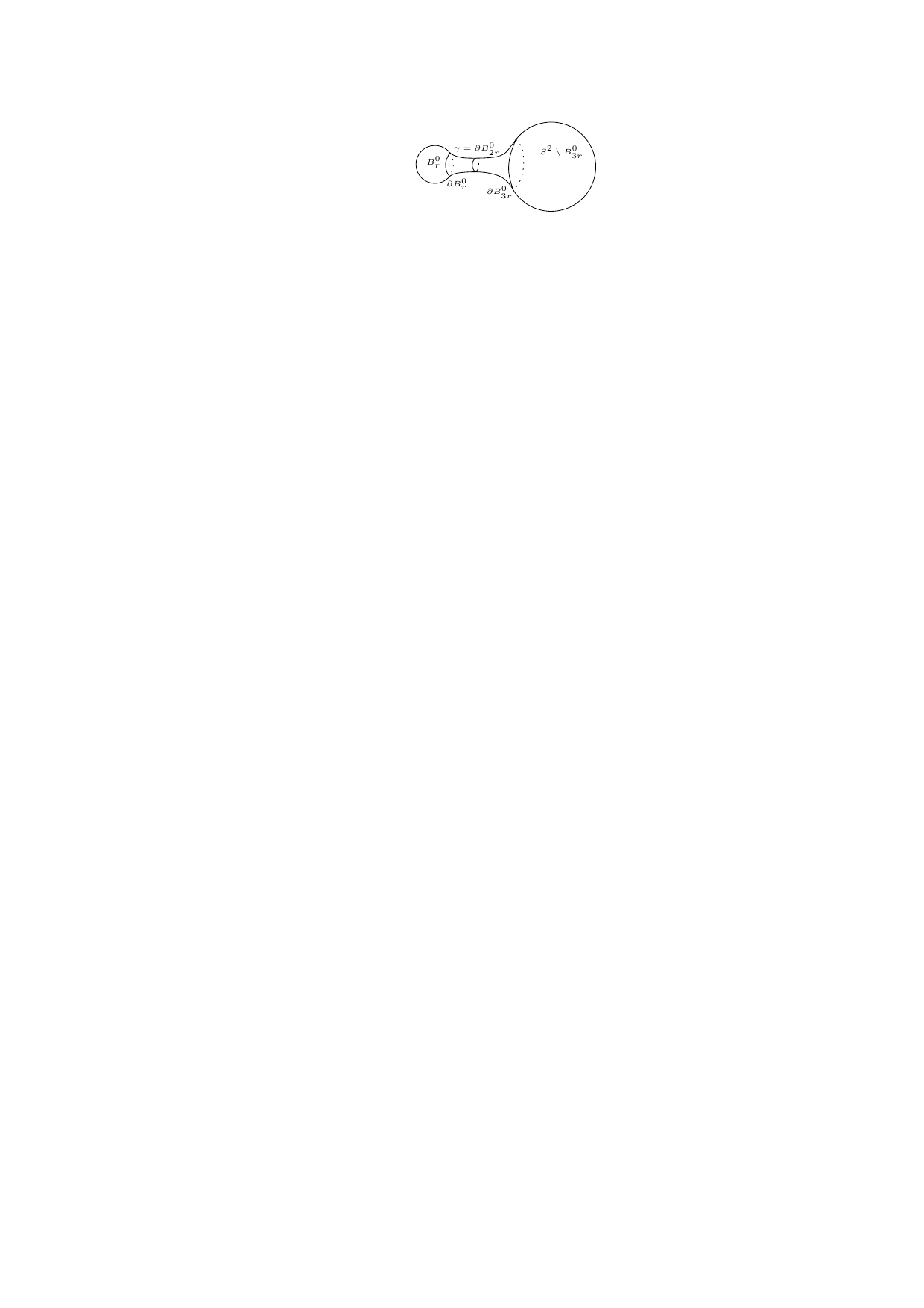}
\caption{Dumbbell shaped rotational sphere whose width is not the lowest non trivial critical value of the distance.}\label{fig-Width-not-small}
\end{figure}

Figure \ref{fig-Width-not-small} contains a sketch of the rotational sphere described above. Let $\{x,y\}\subset \gamma$ be such that the closure $\gamma_1$ and $\gamma_2$ of the arcs of $\gamma\setminus \{x,y\}$ have the same length, $L(\gamma_i) = L(\gamma)/2$.  We claim that the $\gamma_i$ are minimizing geodesics. Indeed, by assumptions (iii) and (iv), any geodesic $\alpha$ through $x=\alpha(0)$ and different from $\gamma_i$, $i=1,2$, must leave the neck $B^0_{3r}\setminus B^0_r$ before passing through $y$. The same holds if we interchange the roles of $x$ and $y$. Therefore, by assumption (v), the length of any geodesic different from the $\gamma_i$ and joining $x$ and $y$ is strictly greater that $L(\gamma)/2 = L(\gamma_i)$. This implies that $\{\gamma_1, \gamma_2\}$ is simultaneously stationary with endpoints $x$ and $y$, and, by Proposition \ref{prop-characterization-critical-simult-st}, $\{x,y\}$ is a critical point of the distance function.

In what follows, we prove that $W_d(S^2, g) > dist(x,y)$. Consider the antipodal map $A : S^2 \rightarrow S^2$, $A(y) = -y$, which is clearly continuous and without fixed points. By Remark \ref{rmk-fixedpoints}, every sweepout $\phi$ passes throught a pair of antipodal points $\{y, A(y)\}$. On the other hand, for every $\varepsilon >0$, if $r>0$ made sufficiently small (depending on $\varepsilon$), it follows that 
$$
dist(y, A(y)) \geq \pi -\varepsilon > L(\partial B^0_{3r}) > L(\gamma),
$$
for every $y \in S^2$, where the final inequality is a consequence of (iv) above.

\subsection{Proof of Theorem \ref{thm-index-bound}}

In this section we prove the claim about the index of the closed geodesic made of two simultaneously stationary minimizing geodesics. We begin with the following key proposition.

\begin{prop}\label{2-clo-stri}
Let $(S^{2},g)$ be a Riemannian sphere which does not contain stable closed geodesics. Let $\{x,y\}$ be a critical point of the distance with $x\neq y$. Suppose that $x$ and $y$ are joined by two distinct minimizing geodesics $\gamma_1$ and $\gamma_2$ such that $\gamma_1\cup\gamma_2=\gamma$ is a closed geodesic of index $\geq 2$. Then, there exists sweepout $\psi\in[\Phi_0]$ such that $dist(\psi(p))< dist(x,y)$, for all $p\in S^2$.
\end{prop}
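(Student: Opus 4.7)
The plan is to refine the sweepout produced by Proposition~\ref{exis-sweep-closed-geod} by using the extra room provided by the Morse index being at least two. Starting from a positive normal variation $V$ of $\gamma$ coming from Lemma~\ref{funct-exist-lem}, Proposition~\ref{exis-sweep-closed-geod} already gives a sweepout $\psi \in [\Phi_{0}]$ with $dist(\psi(p)) \leq dist(x,y)$ for every $p \in S^{2}$. A direct inspection of the construction on the strip component of $S^{2}$, where $\psi(t,s) = \{\rho_{s}(\gamma(t)),\rho_{s}(x)\}$ with both entries lying on the closed curve $\rho_{s}(\gamma)$, shows that $dist(\psi(p)) \leq L(\rho_{s}(\gamma))/2$, and that equality with $dist(x,y)$ can occur only at the unique parameter $(t,s) = (a,0)$: for $s \neq 0$, $L(\rho_{s}(\gamma)) < 2\,dist(x,y)$ gives strict inequality, and for $s = 0$ the pair $\{\gamma(t),x\}$ lies at distance strictly less than $dist(x,y)$ unless $t = a$, when the pair becomes $\{y,x\}$. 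On the Birkhoff caps the inequality is already strict. Hence the only obstruction to strict decrease is the single point $p^{\ast} \in S^{2}$ with $\psi(p^{\ast}) = \{y,x\}$.

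Since $\gamma$ has Morse index at least two, the quadratic form $\mathcal{Q}$ on periodic normal vector fields along $\gamma$ admits a negative-definite subspace of dimension at least two. Applying a Gram--Schmidt procedure adapted to $\mathcal{Q}$ within such a subspace containing $V$, one produces a second periodic normal vector field $W$ along $\gamma$, linearly independent from $V$, with $\mathcal{Q}(W,W) < 0$ and $\mathcal{Q}(V,W) = 0$. Extend $W$ to an ambient vector field on $S^{2}$, and let $\sigma_{u}$ denote its flow. A standard second variation computation (in which the $\mathcal{Q}$-orthogonality eliminates the cross term in the Hessian) shows that the two-parameter family of smooth closed curves $\Gamma_{s,u} := \sigma_{u}(\rho_{s}(\gamma))$ satisfies
\[
L(\Gamma_{s,u}) \;\leq\; 2\,dist(x,y) - \theta\,(s^{2} + u^{2})
\]
for some $\theta > 0$ and every $(s,u)$ in a small disk around the origin. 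In particular, $L(\Gamma_{s,u}) < 2\,dist(x,y)$ whenever $(s,u) \neq (0,0)$.

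Fix $t_{0} > 0$ small and a smooth bump function $\eta \colon S^{2} \to [0,1]$ supported in a small neighborhood $U$ of $p^{\ast}$ entirely contained in the strip, with $\eta(p^{\ast}) = 1$. Define
\[
\widetilde{\psi}(p) \;:=\; \bigl\{\sigma_{t_{0}\eta(p)}(\psi_{1}(p)),\; \sigma_{t_{0}\eta(p)}(\psi_{2}(p))\bigr\}.
\]
Outside $U$ we have $\widetilde{\psi}(p) = \psi(p)$ and $dist(\widetilde{\psi}(p)) < dist(x,y)$ since $p \neq p^{\ast}$. Inside $U$, both entries of $\widetilde{\psi}(p)$ lie on the closed curve $\Gamma_{s(p),\,t_{0}\eta(p)}$, so
\[
dist(\widetilde{\psi}(p)) \;\leq\; \tfrac{1}{2} L\bigl(\Gamma_{s(p),\,t_{0}\eta(p)}\bigr) \;<\; dist(x,y),
\]
the strict inequality holding because $s(p) = 0 = t_{0}\eta(p)$ would force $p = p^{\ast}$, at which $\eta(p^{\ast}) = 1$ ensures $t_{0}\eta(p^{\ast}) \neq 0$. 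The linear homotopy $(p,\lambda) \mapsto \{\sigma_{\lambda t_{0}\eta(p)}(\psi_{1}(p)),\, \sigma_{\lambda t_{0}\eta(p)}(\psi_{2}(p))\}$, $\lambda \in [0,1]$, shows $\widetilde{\psi} \in [\Phi_{0}]$.

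The main obstacle is producing the auxiliary variation $W$ with $\mathcal{Q}(W,W) < 0$ that is linearly independent from $V$ and $\mathcal{Q}$-orthogonal to it: if $W$ were proportional to $V$ --- which is the only option available when the Morse index equals one --- the map $\widetilde{\psi}$ would reduce to a mere reparametrization of $\psi$ in the $s$-variable of the strip, and the maximum distance would not decrease. The hypothesis that the index is at least two is precisely what furnishes a two-dimensional $\mathcal{Q}$-negative-definite subspace, from which the Gram--Schmidt step produces such a $W$; this in turn yields the strict two-parameter decrease of $L(\Gamma_{s,u})$ that powers the strict inequality in $\widetilde{\psi}$.
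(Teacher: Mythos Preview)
Your proof is correct and follows essentially the same approach as the paper: both exploit the index $\geq 2$ hypothesis to produce a second $\mathcal{Q}$-negative normal direction $W$ (the paper's $Z$) that is $\mathcal{Q}$-orthogonal to $V$, yielding a two-parameter variation of $\gamma$ whose length has a strict local maximum at the origin, and then use this to perturb the sweepout of Proposition~\ref{exis-sweep-closed-geod} near the unique point where equality $dist(\psi(p))=dist(x,y)$ is attained. The only difference is cosmetic---the paper reparametrizes the strip by a function $\bar s(s)$ of the transverse coordinate alone, whereas you apply a bump-modulated flow depending on the full domain variable $p$; your phrase that $(s(p),t_0\eta(p))=(0,0)$ ``would force $p=p^{\ast}$'' is slightly imprecise (points on $\gamma$ outside $\operatorname{supp}\eta$ also satisfy it), but the conclusion still holds since in that case $\widetilde\psi(p)=\psi(p)$ with $p\neq p^{\ast}$.
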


\begin{proof}
Proposition \ref{exis-sweep-closed-geod} gives us a sweepout $\psi$ with $dist(\psi(p))\leq dist(x,y)$, for all $p\in S^{2}$. During the construction, a family of curves $\rho_s(\gamma)$ was obtained, for $s\in [-\varepsilon,\varepsilon]$, in such a way that the function $L(\rho_s(\gamma))$, that measures the lengths of the curves $\rho_s(\gamma)$, has a critical point at $s=0$ and a strictly negative second derivative at that point. Recall that $\rho_s$ is the flow of a vector field $V$ on $S^2$, whose restriction to $\gamma$ is a nowhere-vanishing eigenvector field of the Jacobi operator with negative eigenvalue.

Since the Morse index of $\gamma$ is at least two, the maximum dimension of a vector space of vector fields normal to $\gamma$ where $\mathcal{Q}$ is negative definite is at least two. Choose some normal vector field $Z$ in this two-dimensional subspace such that $Z$ and $V$ are linearly independent vector field, and $\mathcal{Q}(Z, V)=0$. Extend $Z$ to a vector field on $S^2$ whose support, that intersects a neighborhood of the curve $\gamma$, is contained in the region covered by the curves $\rho_s(\gamma)$, for $s\in (-\varepsilon,\varepsilon)$. Let $\zeta_{\overline{s}}$ denote the flow of $Z$.
	
The function $(s,\overline{s})\in(-\varepsilon,\varepsilon)\times (-\infty,+\infty) \mapsto L(\zeta_{\overline{s}}(\rho_s(\gamma)))$ is smooth, has a critical point at $(0,0)$, and, by the formula for the second variation of length \ref{defi-quadratic-form-second-variation}, and our choice of vector field $Z$, the Hessian is negative definite at $(0,0)$. In particular, $(0,0)$ is an isolated maximum of the function $L(\zeta_{\overline{s}}(\rho_s(\gamma)))$. Therefore, decreasing $\delta<\varepsilon$ if necessary, we can find a continuous function  $\overline{s}: s\in [-\varepsilon, \varepsilon]\to [0,\eta]$ that vanishes on $[-\varepsilon, -\delta)\cup(\delta,\varepsilon]$, and satisfying $L(\zeta_{\overline{s}(s)}(\rho_s(\gamma))) < L(\gamma)$, for all $s \in [-\delta, \delta]$. It follows that
		\begin{eqnarray*}
			dist(p,\zeta_{\overline{s}(s)}(\rho_{s}(x)))< dist(x,y), 
		\end{eqnarray*}
for all  $s\in[-\delta,\delta]$ and $p\in\zeta_{\overline{s}(s)}(\rho_{s}(\gamma))$. Replacing the map $f(t,s)$ from the proof of Proposition \ref{exis-sweep-closed-geod} by $(t,s)\mapsto \zeta_{\overline{s}(s)}(\rho_{s}(\gamma(t))$, and proceeding as in that argument, we obtain the sweepout with the desired properties.
\end{proof}

\begin{proof}[Proof of Theorem \ref{thm-index-bound}]
Let $\{x,y\}\subset S^2$ be a critical point with $dist(x,y)=W_d(S^2,g)>0$ as in the statement of the theorem. If the index of the closed geodesic $\gamma$ is not one, Proposition \ref{2-clo-stri} gives us a sweepout $\psi$ of $S^2$ such that $dist(\psi(p))<dist(x,y)$ for all $p\in S^2$. However, this contradicts the definition of $W_d(S^2,g)$ and completes the proof.
\end{proof}

\subsection{Proof of Theorem \ref{thm-comparison}}

In this section we apply the sweepouts produced in the proof of Theorem \ref{thm-3-simult-st} to prove the result about the direct comparison between our min-max invariant and the classic first width of Almgren-Pitts of positively curved two-spheres.

\begin{proof}[Proof of Theorem \ref{thm-comparison}]
Calabi and Cao proved that $\omega_1(S^2, g) = L(\gamma)$, where $\gamma$ is a simple closed geodesic. Fix $x$ and $y$ in $\gamma$ that divide this curve into two curves of the same length. Using the sweepout of Proposition \ref{exis-sweep-closed-geod} modified by Remark \ref{rmk-prop-generaliz-closed-geodesics}, we produce a sweepout $\psi\in[\Phi_0]$ such that 
$$
W_d(S^2, g) \leq \sup_{p\in S^2} dist(\psi(p)) \leq \frac{L(\gamma)}{2} = \frac{\omega_1(S^2, g)}{2}.
$$
Therefore, we obtain the desired inequality. Moreover, if equality holds, then the sweepout above is optimal. By construction, the sweepout $\psi$ is of the form $\psi(p) = \{\psi_1(p), \psi_2(p)\}$, where $\psi_1(p)$ and $\psi_2(p)$ belong to a path of closed curves of lengths at most $L(\gamma)$. Moreover, $\gamma$ is the only curve of this family of length $L(\gamma)$. The estimate on the width comes from the fact that the distance between two points in a curve of length $\ell$ is at most $\ell/2$. Finally, recall that $dist(\psi_1(p), \psi_2(p))=L(\gamma)/2$ can only hold at a single $p\in S^2$ where $\psi(p) = \{x,y\}$. In particular, if the two portions of $\gamma$ determined by $x$ and $y$ are not minimizing, then 
$$
\sup_{p\in S^2} dist(\psi(p)) < \frac{L(\gamma)}{2}.
$$
Since this would contradict the optimality of $\psi$, we conclude that the two halves of that closed geodesic are minimizing. 
\end{proof}

\subsection{Convex geometry lemma}\label{sect.convex.geom} 

In this section we prove the convex geometry result, Lemma \ref{lemm-convex-geometry}, applied in Section \ref{subsect-proof-thm-3-simult-st}.

\begin{proof}[Proof of Lemma \ref{lemm-convex-geometry}]
The hypotheses imply that the origin $0 \in \mathbb{R}^2$ belongs to the convex quadrilaterals $V(1,2,3,4)$, with vertices $v_1$, $v_2$, $v_3$, $v_4$ in this order, and $W(1,2,3,4)$, obtained by $w_1$, $w_2$, $w_3$, $w_4$ in this order. We also use the notation $V(i,j,k)$ to denote the triangle whose vertices are $v_i$, $v_j$, and $v_k$, and, similarly, use $W(i,j,k)$ for triangles with vectices in $\{w_i\}$. Notice that the origin belongs to at least two of the triangles $V(i,j,k)$.

We proceed by contradiction. If the origin belongs to at least three different triangles of type $V(i, j, k)$, then there is nothing to be proved. Assume that $0$ belongs to exactly two of the triangles $V(i,j,k)$, and two of the $W(i,j,k)$ with different sets of indices. Suppose, without generality, that 
$$
0 \in V(1,2,3) \cap V(1,2,4) \cap W(1,3,4) \cap W(2,3,4).
$$
We assume that the origin belongs to the interior of those four triangles. Otherwise, the statement would hold. Figure \ref{quadrilateral} contains the quadrilateral of vertices $v_i$ in the unit circle, whose sides are represented by dashed segments, and the associated convex quadrilateral whose sides have lengths the $\lambda_i$. The contradiction hypothesis implies that $\theta_1 + \theta_2 > \pi$ and $\theta_1+\theta_4 >\pi$. 

\begin{figure}[h]
\includegraphics[scale=2.6]{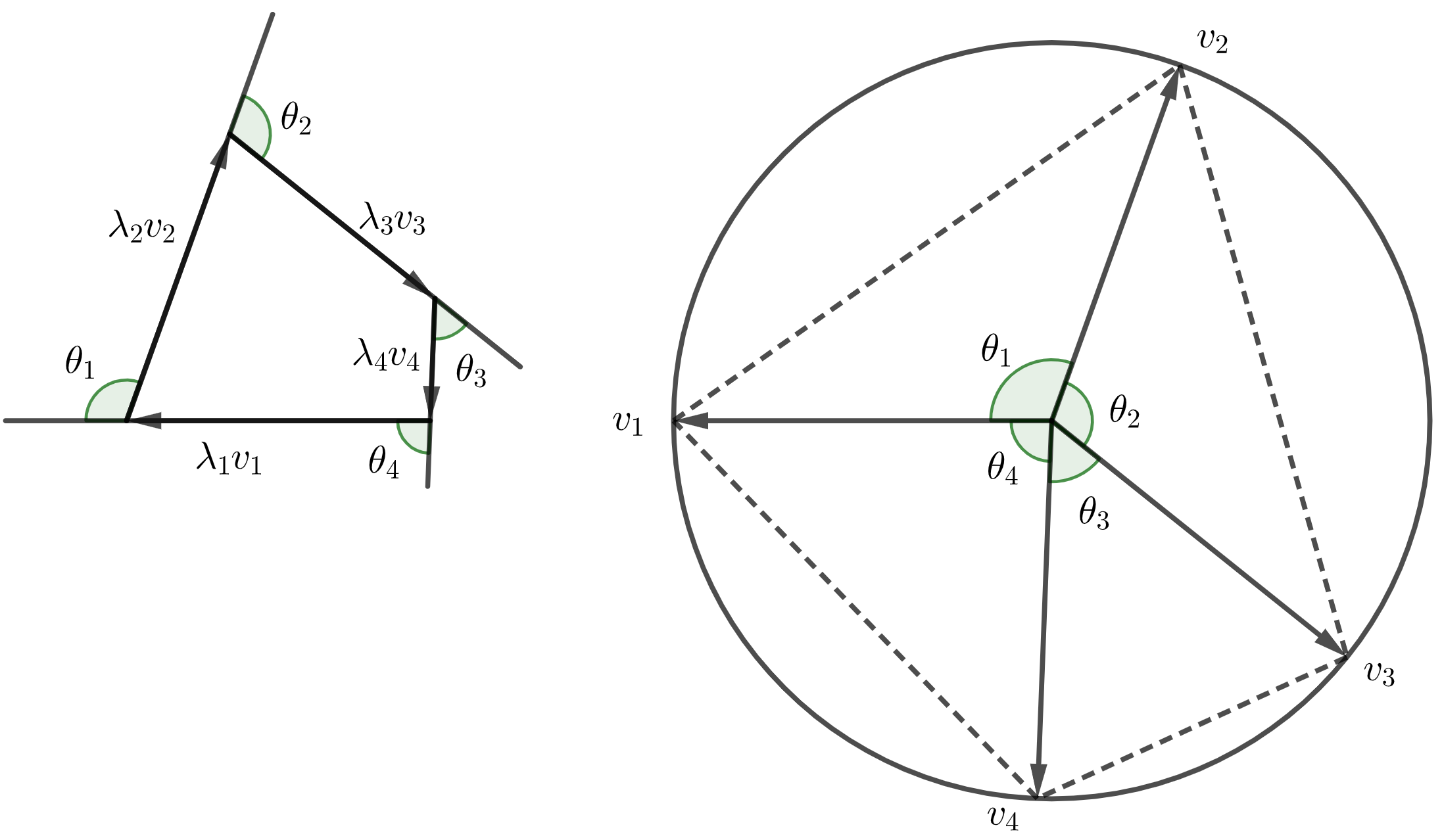}
\caption{Quadrilaterals associated to zero sum unit vectors with weights.}\label{quadrilateral}
\end{figure}

Similarly, let $\theta'_i$ be the corresponding central angles between the vectors $w_i$. More precisely, $\theta'_i$ is the measure of the angle determined by the rotation from $w_i$ to $w_{i+1}$ in the counterclockwise direction for $i=1,2,3$, and from $w_4$ to $w_1$ in the case of $\theta'_4$. The contradiction hypothesis yields $\theta'_2+\theta'_3 > \pi$ and $\theta'_3+\theta'_4>\pi$. Summarizing, one has $\min \{\theta_1 + \theta_2,  \theta_1+\theta_4, \theta'_2+\theta'_3, \theta'_3+\theta'_4 \} > \pi$.

Let us express that condition in terms of the angles $\theta_4$ and $\theta'_4$, and the $\lambda_i$ only. Label the vertices of the associated quadrilateral $A$, $B$, $C$, and $D$, as in Figure \ref{lc-1}. The law of cosines on triangles $ABD$ and $BCD$ gives
\begin{equation}\label{lc-4}
\lambda_1^2 + \lambda_4^2 + 2\lambda_1\lambda_4 \cos \theta_4 = \lambda_2^2 + \lambda_3^2 + 2\lambda_2\lambda_3 \cos \theta_2.
\end{equation}
Another applications of the law of cosines yields
\begin{equation}\label{lc-2}
\cos (\angle ABD) = \frac{\lambda_1+\lambda_4 \cos \theta_4}{|BD|}
\text{ and }
\sin (\angle ABD) = \frac{\lambda_4 \sin \theta_4}{|BD|},
\end{equation}
where $|BD|$ represents the length of the segment $BD$. Notice that the sign of $\sin (\angle ABD)$ is known to be non-negative because all $\theta_i$ measure at most $\pi$, by hypothesis, and $\theta_1 + \angle ABD \leq \pi$. Similarly, one has
\begin{equation}\label{lc-3}
\cos (\angle DBC) = \frac{\lambda_2+\lambda_3 \cos \theta_2}{|BD|}
\text{ and }
\sin (\angle DBC) = \frac{\lambda_3 \sin \theta_2}{|BD|}.
\end{equation}
Equations (\ref{lc-2}) and (\ref{lc-3}), and the fact that $\angle ABD + \angle DBC = \pi- \theta_1$ imply that
\begin{equation}
\sin \theta_1 = \frac{\lambda_4 \sin \theta_4(\lambda_2+\lambda_3 \cos \theta_2) + (\lambda_1+\lambda_4 \cos \theta_4)\lambda_3 \sin \theta_2 }{|BD|^2}
\end{equation}
and
\begin{equation}
\cos \theta_1 = \frac{\lambda_3 \lambda_4 \sin \theta_2 \sin \theta_4 - (\lambda_1+\lambda_4 \cos \theta_4)
(\lambda_2+\lambda_3 \cos \theta_2)}{|BD|^2}.
\end{equation}

\begin{figure}[h]
\includegraphics[scale=5]{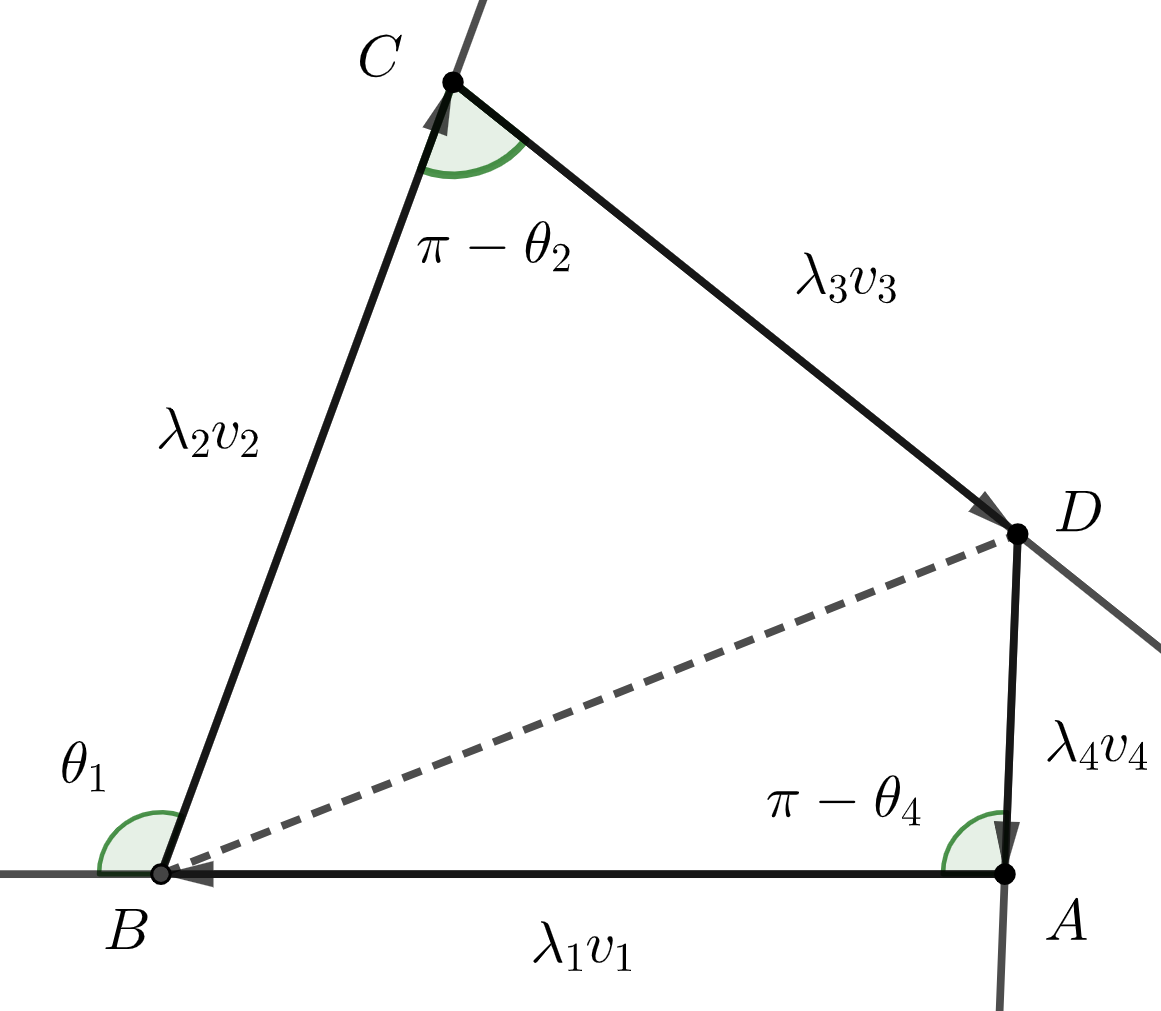}
\caption{Expressing $\theta_2$ in terms of $\theta_4$.} \label{lc-1}
\end{figure}

Equation (\ref{lc-4}) allows us to express $\theta_2$ in terms of the $\lambda_i$ and $\theta_4$:
\begin{equation*}
\sin \theta_2 = \frac{f(\theta_4)}{2\lambda_2\lambda_3} \text{ and } \cos \theta_2 = \frac{\lambda_1^2+\lambda_4^2-\lambda_2^2-\lambda_3^2+2\lambda_1\lambda_4 \cos \theta_4}{2\lambda_2\lambda_3},
\end{equation*}
where $f(\theta_4) := \sqrt{4\lambda_2^2\lambda_3^2 - (\lambda_1^2+\lambda_4^2-\lambda_2^2-\lambda_3^2+2\lambda_1\lambda_4 \cos \theta_4)^2}$.


Since $\theta_1+\theta_4 > \pi$ and $\theta_1+\theta_2 > \pi$, the values of $\sin(\theta_1+\theta_4)$ and $\sin(\theta_1+\theta_2)$ must be negative. Combining the above formulae, we obtain expressions for $\sin(\theta_1+\theta_4)$ and $\sin(\theta_1+\theta_2)$ in terms the $\lambda_i$ and $\theta_4$ only.  In conclusion, using $\theta:=\theta_4$, one obtains the following inequalities
\begin{align}\label{ineq-1}
(\lambda_4+\lambda_1\cos\theta)f(\theta) < \lambda_1\sin\theta (\lambda_1^2+\lambda_4^2+\lambda_2^2-\lambda_3^2+2\lambda_1\lambda_4\cos\theta),
\end{align}
and
\begin{align}\label{ineq-2}
\lambda_4\sin\theta (\lambda_1^2+\lambda_4^2-\lambda_2^2+\lambda_3^2+2\lambda_1\lambda_4\cos\theta) < (\lambda_1+\lambda_4\cos\theta)f(\theta).
\end{align}

The conditions $\theta'_2+\theta'_3>\pi$ and $\theta'_3+\theta'_4>\pi$ correspond to a pair of analog inequalities relating the $\lambda_i$ and the angle $\theta_4'$. Making a correspondence with the preceding case, one obtains, for $\theta' :=\theta_4'$,
\begin{align}\label{ineq-3}
(\lambda_1+\lambda_4\cos\theta')f(\theta') < \lambda_4\sin\theta' (\lambda_1^2+\lambda_4^2-\lambda_2^2+\lambda_3^2+2\lambda_1\lambda_4\cos\theta'),
\end{align}
and
\begin{align}\label{ineq-4}
\lambda_1\sin\theta' (\lambda_1^2+\lambda_4^2+\lambda_2^2-\lambda_3^2+2\lambda_1\lambda_4\cos\theta') < (\lambda_4+\lambda_1\cos\theta')f(\theta').
\end{align}
Notice that the function $f(\cdot)$ in the last two inequalities is the same as the previously defined function, since the relationship between the two inequalities (\ref{ineq-1}) and (\ref{ineq-2}) and the inequalities (\ref{ineq-3}) and (\ref{ineq-4}) is that one interchanges the roles of $\lambda_1$ and $\lambda_4$, and of $\lambda_2$ and $\lambda_3$. Therefore, the function $f$ is the same because it does not change after these interchanged positions.

We must prove that inequalities (\ref{ineq-1}), (\ref{ineq-2}), (\ref{ineq-3}), and (\ref{ineq-4}) can not hold simultaneously for a choice of angles $\theta$ and $\theta'$ in $(0,\pi)$. It suffices to show that both equations
\begin{align}\label{eq-1-proof1}
(\lambda_4+\lambda_1\cos\alpha)f(\alpha) = \lambda_1\sin\alpha (\lambda_1^2+\lambda_4^2+\lambda_2^2-\lambda_3^2+2\lambda_1\lambda_4\cos\alpha),
\end{align}
and
\begin{align}\label{eq-2-proof1}
(\lambda_1+\lambda_4\cos\beta)f(\beta) = \lambda_4\sin\beta (\lambda_1^2+\lambda_4^2-\lambda_2^2+\lambda_3^2+2\lambda_1\lambda_4\cos\beta),
\end{align}
can not be both solvable. More precisely, for fixed $\lambda_1$, $\lambda_2$, $\lambda_3$, and $\lambda_4$, it is not possible to have solutions $\alpha, \beta \in (0,\pi)$ for both equations at the same time. Indeed, (\ref{ineq-1}) and (\ref{ineq-4}) would yield an $\alpha$ solving (\ref{eq-1-proof1}), and (\ref{ineq-2}) and (\ref{ineq-3}) would yield a $\beta$. This application of the Intermediate Value Theorem is possible because, regardless of the values of the $\lambda_i>0$, the domain of $f(\theta)$ is always an interval. Indeed, it is defined whenever
$$
-1\leq \frac{\lambda_1^2+\lambda_4^2-\lambda_2^2-\lambda_3^2+2\lambda_1\lambda_4 \cos \theta}{2\lambda_2\lambda_3} \leq 1.
$$

In what follows, we use $x = \cos\alpha$, $a=\lambda_1$, $b = \lambda_2$, $c=\lambda_3$, and $d=\lambda_4$. Squaring both sides of equation (\ref{eq-1-proof1}) gives us
\begin{align}\label{eq-1.3-proof1}
(d+a x)^2
\left(4b^2c^2 - (a^2+d^2-b^2-c^2+2ad x)^2\right)
\\ = a^2 (1-x^2) (a^2+d^2+b^2-c^2+2ad x)^2,\notag
\end{align}
Notice that the degree $4$ terms cancel out and the equation becomes a degree $3$ polynomial equation on $x$. The set of roots of (\ref{eq-1.3-proof1}) is given by
\begin{equation}\label{roots}
\left\{ -\frac{a^2+d^2}{2ad}, -\frac{a^2+d^2+b^2-c^2+2bd}{2a(b+d)}, \frac{a^2+d^2+b^2-c^2-2bd}{2a(b-d)}\right\},
\end{equation}
if $b\neq d$, $b\neq -d$, $a\neq 0$, and $d\neq 0$. Notice that, for the given values of $a$, $b$, $c$, and $d$, the conditions $b\neq -d$, $a\neq 0$, and $d\neq 0$ hold automatically. Direct substitution implies that the expressions in (\ref{roots}) are roots of (\ref{eq-1.3-proof1}). Analyzing the cases of equalities between each pair of such expressions yields that they are typically different, and, therefore, (\ref{roots}) contains all roots of (\ref{eq-1.3-proof1}), even when some of those coincide. In the case that $b=d$, the equation becomes a degree $2$ polynomial equation whose roots are in the set
\begin{equation}\label{roots-case-b=d}
\left\{ -\frac{a^2+b^2}{2ab}, -\frac{a^2+4b^2-c^2}{4ab}\right\}.
\end{equation}
When two of the roots in those sets coincide, they become multiple roots of the polynomial. These two cases will be analyzed separately. Remember that $x = \cos \alpha$, and $\alpha \in (0,\pi)$. Then, we are interested in roots $x \in (-1,1)$.

Let us consider the case $b\neq d$. Let $x_1$, $x_2$, $x_3$ be the roots that appear in the set (\ref{roots}), in the same order. Plugging in the root $x_2$ of (\ref{eq-1.3-proof1}) into the original equation, (\ref{eq-1-proof1}), and analyzing the signs of both sides of the equations yields that $x_2$ is a root of (\ref{eq-1-proof1}) if, and only if, $a^2 + b^2 = c^2 + d^2$. Moreover, this is equivalent to $x_2=x_3$. Therefore, it is not necessary to analyze the case of the root $x_2$. In addition, one has $x_1 \leq -1$. Since we are only interested in roots strictly between $-1$ and $1$, this root should not be considered as well. Equation (\ref{eq-1-proof1}) has a root $x=\cos\alpha$ if, and only if,
\begin{equation}\label{ineq-solve1}
x_3 = \frac{a^2+d^2+b^2-c^2-2bd}{2a(b-d)} = \frac{a^2-c^2}{2a(b-d)} + \frac{b-d}{2a} \in (-1,1).
\end{equation}
Similarly, equation (\ref{eq-2-proof1}) has a root $x = \cos \beta$ if, and only if,
\begin{equation}\label{ineq-solve2}
-1 < \frac{d^2-b^2}{2d(c-a)} + \frac{c-a}{2d} < 1.
\end{equation}

We claim that the assumption considered in equation (\ref{ineq-solve1}) implies that one of the following sequence of inequalities hold: 
\begin{enumerate}
\item[(a1)] $d<b$, $0< \frac{1}{2}-b < c$, and $\frac{1}{2}-b <a$;
\item[(a2)] or $d>b$, $0<\frac{1}{2}-a-b < c < \frac{1}{2}-b$.
\end{enumerate}
Indeed, if $d<b$, then the inequality in (\ref{ineq-solve1}) can be written in the form
$$
(d-b-2a)(b-d) < a^2-c^2 < (d-b+2a)(b-d).
$$
Using $d = 1-a-b-c$, one has
\begin{align*}
(d-b-2a)(b-d) = a^2 - c^2 - (1-2a-2b)^2  +2c(1-2a-2b),
\end{align*}
and
\begin{align*}
(d-b+2a)(b-d) = a^2 -c^2 - (1-2b)^2  +2c(1-2b).
\end{align*}
Therefore, after simplifications, the inequalities become
\begin{equation*}
 2c(1-2a-2b) < (1-2a-2b)^2 \text{ and }  (1-2b)^2 < 2c(1-2b).
\end{equation*}
The second inequality gives us that $0<1-2b < 2c$. We claim that $1-2a-2b<0$. Otherwise, $2c < 1-2a-2b$, which, together with $0<1-2b < 2c$, would imply $a<0$, contradicting the assumption $\lambda_i >0$, for all $i$, and concluding case (a1). The second case is similar and the details are omitted. 

Similarly, (\ref{ineq-solve2}) implies that one of the following itens holds:
\begin{enumerate}
\item[(b1)] $a<c$, $0< \frac{1}{2}-c < b$, and $\frac{1}{2}-c <d$;
\item[(b2)] or $a>c$, $0<\frac{1}{2}-d-c < b < \frac{1}{2}-c$.
\end{enumerate}
It remains to prove that none of the conditions (a1) or (a2) above are compatible with (b1) or (b2). Indeed, each one of the conditions (a1) and (b2) implies $a+b>\frac{1}{2}$. Itens (a2) or (b1) imply $a+b <\frac{1}{2}$. Which yields that (a1) and (b1) are not compatible, and (a2) and (b2) are not compatible. Similarly, each one of the conditions (a1) and (b1) implies that $b+c >\frac{1}{2}$. Itens (a2) or (b2) imply $b+c <\frac{1}{2}$. This finishes the proof in the case $b\neq d$.

In case $b=d$, the polynomial equations in $x = \cos \alpha$ or $\cos\beta$ representing the squared versions of (\ref{eq-1-proof1}) and (\ref{eq-2-proof1}) are degree two polynomials. The first of these two equations has solutions given in (\ref{roots-case-b=d}). As in the case $b\neq d$, the first solution $x_1 = -(a^2+b^2)/(2ab) \leq -1$ is not suitable for us. Moreover, the second root of the squared equation solves the original equation, (\ref{eq-1-proof1}), if, and only if, $a=c$. Hence, (\ref{eq-1-proof1}) and (\ref{eq-2-proof1}) become identities which hold for every $\alpha$ and $\beta$. In particular, the strict inequalities (\ref{ineq-1}), (\ref{ineq-2}), (\ref{ineq-3}), and (\ref{ineq-4}) do not hold.
\end{proof}

\section{Constant width properties of \texorpdfstring{$W_d(\Sigma)$}{WdS} - Proofs}\label{sect-const-width-proofs}

In this section we prove the results stated in Section \ref{subsect-constant-width}.

\begin{proof}[Proof of Proposition \ref{prop-widths-coincide}]
Let us consider the case of strictly convex regions $\Omega$ whose boundary is of class $C^2$, and has strictly positive Gaussian curvature. The general case can be treated by approximation.

One can parametrize the surface $\Sigma = \partial\Omega$ by the inverse Gauss map $\textbf{x}:= N^{-1} : S^2 \rightarrow \partial \Omega$, where $\textbf{x}(v) \in \partial \Omega$ is the unique boundary point for which the outward pointing unit normal vector is $v$, $N(\textbf{x}(v)) = v$, for all $v\in S^2$.

The support function $p: S^2 \rightarrow \mathbb{R}$, defined by $p(v):= \langle \textbf{x}(v), v\rangle$, can be used to represent the directional widths; $w(\Omega, v)= p(v) + p(-v)$, for all $v$. Letting $\left. \nabla p \right|_{v}$ be the gradient of $p$ on the sphere computed at $v$, we have
$$
\textbf{x}(v) = p(v) v + \left. \nabla p \right|_{v}.
$$
The only thing to be verified is that $\langle \textbf{x}(v), w \rangle = \langle \left.\nabla p \right|_{v}, w \rangle$, for every $w$ orthogonal to $v$. Indeed, $\langle \left.\nabla p \right|_{v}, w \rangle = d p_v(w) = \langle d\textbf{x}_v(w), v\rangle + \langle \textbf{x}(v), w\rangle$.

Finally, notice that $\langle d\textbf{x}_v(w), v\rangle=0$, because $ d\textbf{x}_v$ transforms the tangent space $T_vS^2$ into $T_{\textbf{x}(v)}(\partial \Omega)$, and both of these spaces are orthogonal to $v$.

The classical width $w(\Omega)$ is realized by some $w(\Omega, v_0)$, on which the function $v\in S^2 \mapsto w(\Omega, v)$ attains its minimum. Then, $0 = \nabla w|_{v_0} =  \nabla p|_{v_0} - \nabla p|_{-v_0}$. This implies that
$$
\textbf{x}(v_0) - \textbf{x}(-v_0) = (p(v_0)+p(-v_0)) v_0 + \left. \nabla p \right|_{v_0} - \left. \nabla p \right|_{-v_0} = w(\Omega, v_0) v_0.
$$
In particular, the line segment from $\textbf{x}(v_0)$ to $\textbf{x}(-v_0)$ is an orthogonal chord of length $w(\Omega)$. Then, $w(\Omega)$ is the length of the shortest orthogonal chord of $\Omega$, since any orthogonal chord realizes the corresponding directional width.

In order to finish the proof, it suffices to show that $W_d(\partial \Omega)$ is also equal to the length of the shortest orthogonal chord. We are still assuming strict convexity. Theorem \ref{thm-W_d-critical} tells us that $W_d(\partial \Omega)= dist(x_0, y_0)$ is a positive critical value of the distance, where $\{x_0, y_0\} \in \mathcal{P}_{\Sigma}$ is a critical point. On $\mathbb{R}^3$, any two points are joined by a single geodesic. Therefore, the minimizing geodesic connecting $x_0$ and $y_0$ is an orthogonal chord. It remains to show that given any orthogonal chord $c = [x_1, x_2]$ of $\Omega$, connecting $x_1, x_2 \in \partial \Omega$, there exists a sweepout $\phi : S^2 \rightarrow \Sigma$ such that 
$$
\max_{x \in S^2} dist(\phi(x)) = L(c) = dist(x_1, x_2).
$$ 

Let $v^{\ast} \in S^2$ be the unit vector such that $x_1-x_2$ is a positive multiple of $v^{\ast}$. Letting $N(x)$ denote the outward pointing unit normal vector to $\Sigma=\partial \Omega$ at $x$, the convexity hypothesis on $\Omega$ implies that $N: \partial \Omega \rightarrow S^2$ is a diffeomorphism. In particular, the regions defined by 
$$\Sigma^{+} = \{x \in \Sigma : \langle N(x), v^{\ast}\rangle \geq 0\} \text{ and } \Sigma^{-} = \{x \in \Sigma : \langle N(x), v^{\ast}\rangle \leq 0\}$$
are images via $N^{-1}$ of the closed hemispheres determined by $\langle w, v^{\ast} \rangle = 0$.

In the construction of the desired sweepout $\phi$, we apply the auxiliary map $F : \Sigma \rightarrow \Sigma$ defined by $F(x)=x$, if $x \in \Sigma^{-}$, and, if $x$ belongs to the interior of $\Sigma^{+}$ in $\Sigma=\partial\Omega$, $F(x)$ is the point in $\Sigma$ such that $F(x) = x + t v^{\ast}$, for some $t\neq 0$. Geometrically, $x$ and $F(x)$ are the points of intersection of $\Sigma$ and the line parallel to $v^{\ast}$ passing through $x$. For $x \in \Sigma^{+}$ we can even guarantee that $F(x) = x+tv^{\ast}$, for some $t<0$. The map $F$ is continuous on the interior of $\Sigma^{+}$ in $\Sigma$, and it extends continuously to the boundary as the identity map. Indeed, if $\{x_n\}$ is a sequence of points in $\Sigma^{+}\setminus(\Sigma^{+}\cap\Sigma^{-})$, which converges to $x \in \Sigma^{+}\cap\Sigma^{-}$, then, $F(x_n) = x_n + t_n v^{\ast}$, for some $t_n<0$, and for every $n\geq 1$. Suppose, by contradiction, that $F(x_n)$ does not converge to $x$. Then, a subsequence, which we still denote $\{x_n\}$ and $\{t_n\}$, is such that there exists $\varepsilon>0$ such that $t_n<-\varepsilon$, for all $n$. The compactness of $\Sigma$ tells us that $\{t_n\}$ is bounded. We can assume, without loss of generality, that $t_n$ converges to some $t<0$. Hence, $F(x_n)$ tends to $x + tv^{\ast}$, and this is a contradiction to the convexity assumption. Indeed, on one hand, the limit $x + tv^{\ast}$ must be in $\Sigma$, as a limit of points on this surface. On the other hand, the convexity implies that $x$ is the only point of intersection of $\Sigma$ and $T_x\Sigma$. But $x + tv^{\ast} \in T_x\Sigma\setminus\{x\}$, because $v^{\ast} \in T_x\Sigma$ for every $x \in \Sigma^+\cap\Sigma^-$.

In conclusion, $F$ is continuous on $\Sigma$. Moreover, notice that $F(\Sigma)\subset \Sigma^{-}$, which imlpies that $F$ is homotopic to a constant map. We are now ready to define the sweepout $\phi$. Instead of using $S^2$ as the domain, we use $\Sigma$, which is diffeomorphic to $S^2$. Define $\phi(x)=\{x, F(x)\}$, for every $x\in \Sigma$. The map $\phi$ is continuous and homotopic to $\Phi_0$, hence, a sweepout. Finally,
$$
dist(\phi(x)) = dist(x, F(x)) \leq dist(x_1, x_2),
$$
for every $x \in \Sigma$. Moreover, equality holds for $x=x_1$. The equality holds because $F(x_1)=x_2$. Our convexity assumption also implies that $\Sigma$ is contained in the region bounded by the tangent planes $T_{x_1}\Sigma$ and $T_{x_2}\Sigma$, which gives us the desired upper bound.
\end{proof}

\begin{proof}[Proof of Theorem \ref{thm-Reidemeister}]
Let us prove part (I). Assume that $W_d(\Sigma)$ equals the (extrinsic) diameter of $\Sigma$. Suppose, by contradiction, that there exists $x_1 \in \Sigma$ such that $dist (x_1,y) < W_d(\Sigma)$, for every $y \in \Sigma$. Let $i : S^2 \rightarrow M$ be an embedding with $i(S^2)= \Sigma$, and consider $\phi : S^2 \rightarrow \mathcal{P}_\Sigma$ defined by
$$
\phi(x) := \{i(x), i(x_0)\}, \text{ for all } x \in S^2,
$$
where $x_0 \in S^2$ is the point such that $i(x_0) = x_1$. Notice that $\phi$ is homotopic to $\Phi_0$. Indeed, $\Phi_0$ is defined similarly, the only difference being that $\Phi_0(x) = \{i(x), i(x_2)\}$, for every $x\in S^2$, where $x_2 \in S^2$ is, possibly, different from $x_0$. These two maps are clearly homotopic, since $\phi_t(x):=\{ i(x), i(c(t))\}$, where $c(t)$ is a path in $S^2$ joining $c(0)=x_0$ and $c(1)=x_2$, are such that $\phi_0 = \phi$ and $\phi_1 = \Phi_0$. Therefore, $\phi$ is a sweepout satisfying
$$
\sup_{x \in S^2} dist(\phi(x)) = \sup_{y \in \Sigma} dist (x_1,y) < W_d(\Sigma),
$$
which contradicts Definition \ref{defi-width}. Property (I) is proved.

Let us prove (II). Recall that $\Psi$ is a continuous map defined on $\Sigma$ such that $\Psi(x) \in \Sigma$ and $dist(x, \Psi(x)) = diam(\Sigma)$, for every $x\in \Sigma$. Let $\phi : S^2 \rightarrow \mathcal{P}_{\Sigma}$ be an arbitrary sweepout of $\Sigma$, as in Definition \ref{defi-sweepout}, such that $\phi(x) = \{\phi_1(x), \phi_2(x)\}$, for a
pair of continuous maps $\phi_1, \phi_2 : S^2\rightarrow \Sigma$. We claim that there exists $x\in S^2$ and $y \in \Sigma$ such that $\phi(x) = \{y, \Psi(y)\}$. More precisely, we prove that $\phi_2(x)=\Psi(\phi_1(x))$ for some $x\in S^2$.

Suppose, by contradiction, that this is not the case. Then, both points $\phi_1(x)$ and $\phi_2(x)$ do not coincide with $\Psi(\phi_1(x))$, for every $x \in S^2$. Notice that $\phi_1(x)\neq \Psi(\phi_1(x))$ always holds because the distance between these two points equals the diameter of $\Sigma$, which is a positive number. Let $i : S^2\rightarrow \Sigma$ be a fixed embedding from the $2$-sphere to $\Sigma$. For every $x\in S^2$, let us use 
$$
\pi_x : S^2 \setminus \{ i^{-1}\circ \Psi \circ \phi_1 (x)\}  \rightarrow \mathbb{R}^2
$$
to denote the stereographic projection of $S^2$ with base point $i^{-1}\circ \Psi \circ \phi_1 (x)$. Consider the map $\alpha : S^2 \times [0,1] \rightarrow \Sigma$ given by
$$
\alpha(x, t) : = i\circ \pi_x^{-1} \circ \left( (1-t) \cdot \left(\pi_x\circ i^{-1} \circ \phi_1(x)\right) + t \cdot \left(\pi_x\circ i^{-1} \circ \phi_2(x)\right)\right),
$$  
for $x\in S^2$ and $t\in [0,1]$. The map is well defined because of the contradiction hypothesis. Moreover, $\alpha$ is continuous. Notice that, for fixed $x$, the map simply composes $i$ with a parametrization of the arc of circle through the points $i^{-1}(\phi_1(x))$, $i^{-1}(\phi_2(x))$, and $i^{-1}(\Psi\circ \phi_1(x))$, that do not contain the last of these three points, whenever $\phi_1(x)$ and $\phi_2(x)$ are different. Otherwise, $\alpha(x,t)$ is constant with respect to $t$. Therefore, this construction yields that $\phi(x) = \{\phi_1(x), \alpha(x, 1)\}$ is homotopic to $\{\phi_1(x), \alpha(x, 0)\} = \{\phi_1(x)\}$. This is a contradiction and the proof of (II) is finished.

To prove the final part of the statement of the theorem, we show that, on the intrinsic case, $\Sigma = M$, every point $x \in \Sigma$ belongs to at most one pair $\{x,y\}$ which is a critical point of the distance function with $y\neq x$. This property holds for any metric. Indeed, if $\{x,y\}$ is critical, Proposition \ref{prop-characterization-critical-simult-st} implies that there exists an integer $k \leq 5$, and a set of $k$ simultaneously stationary minimizing geodesics $\gamma_j$, $1\leq j\leq k$, with endpoints $x$ and $y$. These geodesics meet at $x$ and $y$ only, because of the fact that they are minimizing. The complement in $\Sigma$ of the union of these curves has $k$ connected components denoted by $U_j$, $1\leq j \leq k$.

Suppose, by contradiction, that there exists $z \notin \{x,y\}$ such that $\{x,z\}$ is also a critical point of the distance. Since each $\gamma_j$ is minimizing from $x$ to $y$, it is the only minimizing geodesic from $x$ to any point in $\gamma_j\setminus\{y\}$. In particular, $z\notin \gamma_j$. Assume, without loss of generality, that $z\in U_1$. Every minimizing geodesic $\alpha$ joining $x$ and $z$ must be completely contained in $U_1$ as well. Otherwise, it would cross some $\gamma_j$ which is part of the boundary of $U_1$, contradicting the minimizing properties of $\alpha$ and $\gamma_j$. Definition \ref{defi-simultaneously-stationary} implies that the angles at $x$ and $y$ between two consecutive $\gamma_k$ and $\gamma_l$ is at most $\pi$. In particular, all velocity vectors of minimizing geodesics $\alpha$ as above would be contained in an angle measuring strictly less that $\pi$, at both $x$ and $y$. This contradicts the fact that $\{x, z\}$ is critical.

In order to finish the proof, observe first that the forward statement of the final claim in the statement is a particular case of (I). Conversely, assume that for each $x \in \Sigma$ there exists $y \in \Sigma$, such that $dist(x,y)= diam(\Sigma)$. These pairs $\{x,y\}$ are, in particular, the only non-trivial critical points of the distance function. This is a consequence of the claim that we proved in the preceding paragraphs. Write $\Psi(x):=y$. It remains the show that $\Psi$ is continuous. Indeed, we have $dist(x, \Psi(x)) = diam(\Sigma)$, for every $x\in \Sigma$. Let $\{x_n\}$, $n\in \mathbb{N}$, be a convergent sequence in $\Sigma$, $\lim x_n = x$. By compactness, $\{\Psi(x_n)\}$ subconverges to a point $z \in \Sigma$. Then, letting $k\rightarrow\infty$ on this subsequence $\{n_k\}$, one concludes that 
$$
dist(x,z) = \lim_{k\rightarrow \infty} dist (x_{n_k}, \Psi(x_{n_k})) = diam(\Sigma). 
$$
In particular, by uniqueness, $z=\Psi(x)$. Since this must hold for every convergent subsequence, one concludes that $\lim \Psi(x_n) = \Psi(x)$. The proof is complete after an application of (II).
\end{proof}

\begin{proof}[Proof of Theorem \ref{thm-involution}]
The fixed point theorem of Brouwer implies that the isometric involution $\Psi : \Omega \rightarrow \Omega$ has a fixed point $p_0 \in \Omega$. By assumption, this map does not have fixed boundary points, then, $p_0$ is not in $\partial \Omega =: \Sigma$. We claim that $p_0$ is unique. Indeed, assume, by contradiction, that $\Psi(p_i)=p_i$, for $i=0, 1$, with $p_0$ and $p_1$ different. Let $\gamma$ be the minimizing geodesic parametrized by arc length and such that $\gamma(0) = p_0$ and $\gamma(a) = p_1$. Since $\Psi$ is an isometry, its image $\Psi\circ \gamma$ is also a minimizing geodesic joining the same endpoints. By the uniqueness hypothesis (i) in the statement of the theorem, we conclude that $\gamma = \Psi \circ \gamma$. This implies that $\gamma$ is made of fixed points of $\Psi$ only. This fact and hypothesis (ii) in the statement of the theorem contradict the hypothesis on the non-existence of boundary fixed points.

We claim that $d \Psi_{p_0} = - Id_{T_{p_0} M}$. Since $d \Psi_{p_0} \circ d \Psi_{p_0} = Id_{T_{p_0} M}$, the possible eigenvalues of $d \Psi_{p_0}$ are $\pm 1$. As before, hypothesis (ii) implies that $+1$ is not an eigenvalue of that map, since the geodesic emanating from $p_0$ in the direction of the corresponding eigenvector would be made of fixed points of $\Psi$. The dimension of $T_{p_0}M$ is three, then, $d \Psi_{p_0}$ has $-1$ as an eigenvalue. Let $v\neq 0$ be such that $d \Psi_{p_0}(v) = -v$. The map $d \Psi_{p_0}$ is an isometry, therefore, the orthogonal complement $\langle v\rangle^{\perp}$ of the line spanned by $v$ is invariant by $d \Psi_{p_0}$. We analyze two possibilities: the restriction of $d \Psi_{p_0}$ to $\langle v\rangle^{\perp}$ is either a rotation, or the composition of a reflection with a rotation. The latter does not occur, since a reflection composed with a rotation always has the number one as an eigenvalue. Then, $d \Psi_{p_0}$ is a rotation on $\langle v\rangle^{\perp}$ and the angle of rotation must necessarily be equal to $\pi$, because of the involutive property of the map $\Psi$. Therefore, the claim is proved.

Let $z_0 \in \Sigma=\partial \Omega$ be such that $dist(p_0, z_0) = dist(p_0, \Sigma)$, and let $\beta$ be a minimizing geodesic parametrized by arc length and joining the points $\beta(0)=p_0$ and $\beta(a)=z_0$. The choice of $z_0$ clearly implies, by the first variation formula, that $\beta$ is orthogonal to $\Sigma$ at $z_0$. In particular, $\beta$ is the only minimizing geodesic connecting $p_0$ and $z_0$. Since $d\Phi_{p_0} = -Id_{T_{p_0}M}$, we have that the curve obtained as the union of $\beta$ and $\Psi(\beta)$ is a (smooth) geodesic connecting the two points $z_0$ and $\Psi(z_0)$. 

The fact that $\beta$ is minimizing and meets $\Sigma$ orthogonally implies that the same holds for $\Psi(\beta)$, and, in particular, these two curve meet at $p_0$ only. Moreover, the fact that $\beta$ is minimizing implies that $L(\beta\cup\Psi(\beta)) \leq 2\cdot diam(\Omega)$. Therefore, hypothesis (iv) implies that $\beta\cup\Psi(\beta)$ is minimizing and satisfies the free boundary condition. It follows that $z_0$ and $\Psi(z_0)$ form a critical point of the distance function on $\mathcal{P}_{\Sigma}$. 

Next, we claim that every point of $\Sigma$ belongs to a unique non-trivial critical point of the distance function defined on $\mathcal{P}_{\Sigma}$. Let $\{x,y\}$ be such a critical point. Since $x, y \in \Sigma = \partial \Omega$, hypothesis (i) gives us that there exists a unique minimizing geodesic connecting $x$ and $y$. Being a critical point, it follows that this minimizing geodesic must satisfy the free boundary condition. A unique free boundary geodesic can emanate from either $x$ or $y$ and stay in $\Omega$, by the convexity assumption included in hypothesis (i). This implies that $x$ does not belong to another non-trivial critical point of the distance. Moreover, the main assumption, $W_d(\Sigma) = diam(\Sigma)$, implies, via part (I) of Theorem \ref{thm-Reidemeister}, that every point $x \in \Sigma$ belongs to a critical point $\{x, y\}$ that is in the level $W_d(\Sigma)$ of the distance function. 

By the fact proved in the preceding paragraph, it follows that 
\begin{align}\label{eqtn-proof-invol-1}
 2\cdot dist(p_0, z_0) = 2 \cdot L(\beta) = L(\beta\cup\Psi(\beta))  = W_d(\Sigma) = diam(\Sigma)
\end{align}

Given $z\in \Sigma$, let $\alpha : [0, \ell] \rightarrow M$ be the minimizing geodesic parametrized by arc length with $\alpha(0)=p_0$ and $\alpha(\ell) = z$. The minimizing curves $\alpha$ and $\Psi(\alpha)$ emanating from $p_0$ only intercept at $p_0$, and, by hypothesis (iv), $\alpha\cup \Psi(\alpha)$ is a minimizing geodesic joining $z$ and $\Psi(z)$. In particular, 
\begin{align}\label{eqtn-proof-invol-2}
2\cdot dist(p_0, z) = 2\cdot L(\alpha) = dist(z, \Psi(z)) \leq diam(\Sigma).
\end{align}
Combining equations (\ref{eqtn-proof-invol-1}) and (\ref{eqtn-proof-invol-2}), it follows that, for every $z\in \Sigma$, one has the inequality $dist(p_0, z) \leq dist(p_0, z_0)$. In conclusion, since $dist(p_o, z_0) = dist(p_0, \Sigma)$, it follows that $dist(p_0, z) = dist(p_0, \Sigma)$, for every $z \in \Sigma$. The remaining steps of the proof are exactly as in the last four parapraphs in the proof of Theorem F in \cite{AMS} to conclude that $\Omega$ is a metric and geodesic ball in $(M^3, g)$ of radius $W_d(\Sigma)/2$.
\end{proof}

\end{document}